%% file: main.tex
\documentclass{amsart}
\usepackage{preamble}

\begin{document}

\title{
Quaternionic Nevanlinna Functions}
 \author[M. Ammar]
{Muhammad Ammar*}
\date{\today}

\thanks{*This work was carried out within the framework of the SIRS (Scientific Inquiry and Research Students) program at Niles Township High School West. All work presented herein is solely that of the author.}

\address{Niles Township High School West, Skokie, IL 60077, United States of America}
\email{muhamm1@nilesk12.org}

\address{Euler Circle, Mountain View, CA 94040, United States of America}
\email{hmammarduo@gmail.com}

\subjclass{Primary: 30D30, 30G35 Secondary: 30D35, 31A05}
\keywords{Nevanlinna theory; value distribution; slice regular functions; quaternionic analysis; semiregular functions; Jensen formula}

\begin{abstract}
    Nevanlinna theory studies the value distribution of meromorphic functions and provides powerful results in the form of the First and Second Main Theorems. In this paper, we introduce quaternionic analogues of the Nevanlinna functions. Starting from the Jensen formula due to \cite{perottiJensen2019}, we derive a notion of total order and an associated integrated counting function. We further define quaternionic Weil functions and corresponding mean proximity functions. In this context, we introduce the class of mean proximity balanced functions, which includes the slice-preserving functions and all semiregular functions with a dominating index in their power series. To address the failure of $\log|f^s|$ to be harmonic, we define a Harmonic Remainder Function that compensates for this defect in the Jensen formula. We then prove a weak First Main Theorem--type result for general semiregular functions and obtain a full First Main Theorem for the mean proximity balanced functions.  
\end{abstract}

\maketitle
\tableofcontents

\section{Introduction}

Nevanlinna Theory is the study of the value distribution of meromorphic functions $f:\cc\to\cp$. The First and Second Main Theorems achieve this by relating the growth of $f$ to its zeroes, poles, and order via the characteristic function $T(f,r)$. One of the earliest results in this field is the Little Picard Theorem, which states that any nonconstant entire function of $\cc$ can omit at most one value \cite{Picard1880}. Rolf Nevanlinna developed two powerful generalizations of this statement in \cite{Nevanlinna1925}, known respectively as the First and Second Main Theorems of classical Nevanlinna Theory.

There has been extensive effort in generalizing the First and Second Main Theorem beyond merely the meromorphic functions of $\cc$ and extending them to functions on higher-dimensional complex manifolds and algebraic varieties (see \cite{Bernard84, Griffiths1973, Noguchi2014, Quang2022} for instance). In this paper, we pursue such a generalization in the context of quaternionic analysis, establishing a version of the First Main Theorem with the appropriate notion of meromorphicity. 

Defining holomorphicity and meromorphicity for quaternion-valued functions is nontrivial due to noncommutativity. In the complex case, the various characterizations of holomorphicity, differentiability, satisfaction of the Cauchy--Riemann equations, and analyticity are equivalent. However, for quaternionic functions $f:\mathbb{H}\to\mathbb{H}$, these conditions diverge, and even the naive notion of quaternionic differentiability,
\[
f'(q) \;=\; \lim_{h\to 0} \frac{f(q+h)-f(q)}{h},
\]
forces $f$ to be affine (see \cite{sudbery1979} for further discussion along these lines). We rely on the works of modern quaternionic analysis, initiated by Gentili and Struppa \cite{GENTILI2007279} and developed further by many authors, which introduce the theory of slice regularity. These results develop the appropriate analogues of holomorphic and meromorphic functions, and form the foundation for our discussion in Section \ref{qanal}.

We now provide an overview of the structure of the paper. Sections \ref{sec:Nevanlinna} and \ref{qanal} briefly summarize the relevant background in Nevanlinna Theory and Quaternionic Analysis respectively, and may be omitted by readers already familiar with these topics.

Section \ref{PJSection} introduces the Jensen formula due to \cite{perottiJensen2019}, and provides a few refinements. We then define a unified notion of total multiplicity and spherical order, which we refer to as total order (Definition \ref{totalOrder}). Thus, the Jensen formula can be more cleanly stated as in Theorem \ref{jensen2}.

Section \ref{sec:NevanlinnaFunctions} introduces the four quaternionic Nevanlinna functions considered in this work. We begin with the integrated counting function $N(f,a,r)$ (Definition \ref{def:IntegratedCountingFunction}) which builds on the notion of total order. We characterize this integrated counting function in terms of an unintegrated counting function and then demonstrate the remaining angular dependencies that cannot be resolved with the radially symmetric unintegrated function. Next, we define Weil functions (Definition \ref{def:Weil}) and further a mean proximity function $m(f,a,r)$ (Definition \ref{def:AnalMPF}). Within this framework, we define the class of mean proximity balanced functions, where the mean proximity function behaves compatibly with the spherical conjugate $S_f$. Finally, we define the harmonic remainder function $H(f,a,r)$ (Definition \ref{def:HarmonicRemainderFunction}), which corrects for the failure of $\log|f^s|$ to be harmonic in the Jensen formula, and we combine these constructions to define the quaternionic characteristic function $T(f,a,r)$ (Definition \ref{def:characteristic}). 

Section \ref{sec:FirstMainTheorem} uses the Jensen formula to prove a First Main Theorem. For general semiregular functions, the theorem holds with weak error terms, while for mean proximity balanced functions, it holds with $O(1)$ error, in direct analogy with the classical case. We then establish the algebraic properties of the characteristic function on the mean proximity balanced functions, paralleling those of the complex theory. 

\section{The Nevanlinna Functions and Theorems}\label{sec:Nevanlinna}

For a meromorphic function $f:\cc\to\cp$, \cite{Nevanlinna1925} introduced three fundamental quantities that describe the distribution of values taken by $f$ on $\DR\coloneq \{z\in\cc : |z|<R\}$. 

\begin{defin}[Integrated Counting Function]\label{claIntCou}
    Let $f:\cc\to\cp$ be a meromorphic function on $\DDR$, $R\leq\infty$, and let $a\in\cp$ and $0\leq r\leq R$. Let $n(f,a,r)$ denote the unintegrated counting function defined as the number of times $f$ attains $a$ in $\DDR$, counted with multiplicity. Then, \[N(f,a,r)\coloneq n(f,a,0)\log r+ \int_0^r [n(f,a,t)-n(f,a,0)]\,\frac{dt}{t}\] is the integrated counting function.   
\end{defin}

As opposed to $n(f,a,r)$, $N(f,a,r)$ is a continuous function in $r$ with desirable analytic properties. Unless otherwise stated, we use the term counting function to refer to the integrated counting function. 

\begin{defin}[Mean Proximity Function]
    Let $a\in\cp$, and let $\lambda_a:\cp\setminus\{a\}\to\rr$ be a Weil function, i.e., there exists on every open neighborhood of $a$ a continuous function $\alpha:\cp\to\rr$ such that \[\lambda_a(z)=-\log|z-a|+\alpha(z).\] Let $f:\cc\to\cp$ be a meromorphic function on $\DDR$, $R\leq\infty$. Then for all $r\leq R$, \[m(f,\lambda_a,r)\coloneq\int_0^{2\pi}\lambda_a(f(re^{i\T}))\frac{d\T}{2\pi}\] is a mean proximity function. Conventionally, we choose \[\lambda_a(z)=\begin{cases}
        \log^+\frac1{|z-a|}  &\text{if} \quad  a,z\neq \infty, \\\log^+ |z| &\text{if} \quad a=\infty 
    \end{cases} \quad \text{and} \quad \lambda_a(\infty)=0 \quad \text{if} \quad a\neq\infty.\] We call the mean proximity function generated by this Weil function the analytic mean proximity function, or simply the mean proximity function, denoted by $m(f,a,r)$. 
\end{defin}

We remark that the mean proximity function is a compensatory function, and as such the specific choice of Weil function is not generally important.  

\begin{defin}[Nevanlinna Characteristic Function]
    Let $f$ be meromorphic on $\DDR$, $R\leq\infty$, and let $a\in\cp$. Then for all $r\leq R$, the (analytic) characteristic function is defined by \[T(f,a,r)\coloneq N(f,a,r)+m(f,a,r).\] More generally, for a Weil function $\lambda_a:\cp\setminus\{a\}\to\rr$, the function defined by \[T_{\lambda_a}(f,a,r)=N(f,a,r)+m(f,\lambda_a,r)\] is a Nevanlinna characteristic function. 
\end{defin}

The significance of the characteristic function arises from the fact that it is essentially invariant with respect to the choice of $a\in\cp$, as evidenced in the below theorem. 

\begin{teo}[First Main Theorem]\label{FirstMainTheorem}
    Let $a\in\cp$ and let $f\not\equiv a,\infty$ be a meromorphic function on $\DR$, $R\leq\infty$. Then for all $r\leq R$, \[N(f,a,r)+m(f,a,r)=T(f,r)+O(1),\] where $T(f,r)\coloneq T(f,\infty,r)$.  
\end{teo}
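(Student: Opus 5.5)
The plan is the standard one: apply the classical Jensen formula to $f-a$ and split $\log|f-a|$ into positive and negative parts. Everything reduces to the case $a\neq\infty$. When $a=\infty$ the statement $T(f,\infty,r)=T(f,r)$ is the definition, with zero error. So fix a finite $a$ and set $g=f-a$. This $g$ is meromorphic on $\DR$ and $g\not\equiv 0$.

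\textbf{Step 1: Jensen for $g$.} Let $c_g$ denote the leading coefficient of the Laurent expansion of $g$ at $0$. Jensen's formula for meromorphic functions, in the form adapted to the integrated counting function of Definition \ref{claIntCou} (which already includes the term $n(\cdot,0)\log r$ to handle a zero or pole at the origin), gives
\[
\int_0^{2\pi}\log|g(re^{i\T})|\,\frac{d\T}{2\pi}=N(g,0,r)-N(g,\infty,r)+\log|c_g|.
\]
If Jensen is not taken as known, I would prove it as follows. Divide out $z^{k}$ to remove the behaviour at the origin. Then factor the remaining zeros and poles in $|z|<r$ by Blaschke factors, leaving a zero-free function with harmonic $\log|\cdot|$. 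Apply the mean value property to that function, and let $r$ approach any boundary zero or pole; this is legitimate because the logarithmic singularities are integrable.

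\textbf{Step 2: translate into Nevanlinna functions.} Write $\log|g|=\log^+|g|-\log^+\frac1{|g|}$. Integrating, the left side of the Jensen formula becomes $m(f-a,\infty,r)-m(f,a,r)$. The second term uses the definition of $m(f,a,r)$, with the convention $\lambda_a(\infty)=0$ at poles, where $\log^+\frac1{|g|}=0$ anyway. On the counting side, $N(g,0,r)=N(f,a,r)$, and $N(g,\infty,r)=N(f,\infty,r)$ because $f$ and $f-a$ have the same poles with the same multiplicities. Rearranging gives
\[
N(f,a,r)+m(f,a,r)=N(f,\infty,r)+m(f-a,\infty,r)-\log|c_g|.
\]

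\textbf{Step 3: compare $m(f-a,\infty,r)$ with $m(f,\infty,r)$.} This is the only estimate required. It uses the elementary inequalities
\[
\log^+|u+v|\le\log^+|u|+\log^+|v|+\log 2,
\qquad
\log^+|u|\le\log^+|u-v|+\log^+|v|+\log2 .
\]
Applied pointwise with $u=f$ and $v=-a$, and then integrated, they give
\[
|m(f-a,\infty,r)-m(f,\infty,r)|\le\log^+|a|+\log 2 .
\]
Substituting this into Step 2 yields $N(f,a,r)+m(f,a,r)=T(f,r)+O(1)$. The constant is bounded by $\log^+|a|+\log2+|\log|c_g||$, which depends on $f$ and $a$ but not on $r$.

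The main point of care is Step 1 when $g$ has a zero or pole at $0$ or on $|z|=r$. The integrated counting function of Definition \ref{claIntCou} is designed precisely for the origin, and the boundary case follows from continuity of both sides in $r$. The rest is bookkeeping.
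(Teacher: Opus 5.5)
Your proposal is correct and is the standard argument: apply Jensen's formula to $f-a$, split $\log=\log^+-\log^+\frac{1}{|\cdot|}$, and compare $m(f-a,\infty,r)$ with $m(f,\infty,r)$ up to $\log^+|a|+\log 2$. The paper does not reprove this classical theorem and instead defers to \cite{cherry2001nevanlinna}, but it follows the same steps in the last part of the proof of Theorem \ref{teo:FMT}; your $\log^+$ form of the elementary inequality is the one actually needed there, since the paper's version with plain $\log$ fails for small arguments.
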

\begin{corr}
    For any $a,b\in\cp$ and $f$ as above, $T(f,a,r)=T(f,b,r)+O(1)$. 
\end{corr}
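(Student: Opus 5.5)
The corollary follows from applying the First Main Theorem (Theorem \ref{FirstMainTheorem}) twice and comparing. Fix $a,b\in\cp$ and a meromorphic $f$ on $\DR$ with $f\not\equiv a,b,\infty$; this is the hypothesis ``$f$ as above'', now required for both values. Theorem \ref{FirstMainTheorem} at the point $a$ gives
\[T(f,a,r)=N(f,a,r)+m(f,a,r)=T(f,r)+O(1),\]
and at the point $b$ it gives
\[T(f,b,r)=N(f,b,r)+m(f,b,r)=T(f,r)+O(1).\]
Subtracting these two identities removes the common reference term $T(f,r)=T(f,\infty,r)$. The difference of two bounded error terms is again bounded, so $T(f,a,r)-T(f,b,r)=O(1)$, which is the claim.

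If $a$ or $b$ equals $\infty$, one of the two applications is the tautology $T(f,\infty,r)=T(f,r)$, and the argument goes through with that equation in place of the corresponding application of the theorem.

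The only step that needs care is the meaning of $O(1)$. In Theorem \ref{FirstMainTheorem} the error is bounded uniformly in $r\le R$, with a constant depending on $f$ and on the point. Classically, via Jensen's formula applied to $1/(f-a)$, it is $\log|c_a|$ plus a bound of size $\log^+|a|+\log 2$ coming from $\log^+|x\pm y|\le \log^+|x|+\log^+|y|+\log 2$, where $c_a$ is the leading Laurent coefficient of $f-a$ at the origin. The constants at $a$ and at $b$ are therefore two fixed finite numbers, so their difference is a fixed finite bound independent of $r$. This is the only point where one must confirm that the implied constants do not depend on $r$; the rest is immediate.
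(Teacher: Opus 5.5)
Your proposal is correct and follows the paper's route. The paper gives no separate proof of this corollary: it is read off from Theorem \ref{FirstMainTheorem} by applying it at $a$ and at $b$ and cancelling the common term $T(f,r)$. This is exactly how the paper proves the quaternionic analogue, Equation \ref{eq:1CharactersticAlgebraMPB}. You also make explicit two points the paper leaves implicit: the extra hypothesis $f\not\equiv b$, and the fact that the implied constants do not depend on $r$.
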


The First Main Theorem is essentially a generalization of the Fundamental Theorem of Algebra to meromorphic functions, as it gives an upper bound on the number of times $f$ attains $a$. The more difficult lower bound arises from the Second Main Theorem. 

\begin{teo}[Second Main Theorem]
    Let $f$ be a transcendental meromorphic function on $\DR$, $0\leq r\leq R\leq \infty$. For $q\geq 2$, let $a_1,\dots a_q\in\cp$ be $q$ distinct points. Then \[(q-2)T(f,r)\leq N(f,\infty,r)+\sum_{j=1}^q N\left(\frac1{f-a_j},r\right)-N_{ram}(f,r)+o(T(f,r)),\] where $N_{ram}$ is the ramification term, with \[N_{ram}(f,r)=2N(f,\infty,r)-N(f',\infty,r)+N\left(\frac1{f'},\infty,r\right).\] 
\end{teo}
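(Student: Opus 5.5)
The strategy is the classical one: reduce the Second Main Theorem to the logarithmic derivative lemma. All the analytic content sits in an estimate on $m(f'/f,r)$, and everything else is bookkeeping with the First Main Theorem (Theorem \ref{FirstMainTheorem}) and Jensen's formula.

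\emph{Reduction to $q$ finite points.} After a Möbius change of variable $f\mapsto 1/(f-c)$, which changes $T$ only by $O(1)$ by Theorem \ref{FirstMainTheorem}, we may assume all $a_j$ are finite. In that setting the target inequality reads
\[
(q-1)T(f,r)\le N(f,\infty,r)+\sum_{j=1}^q N(f,a_j,r)-N_{ram}(f,r)+o(T(f,r)).
\]
Here $\infty$ plays the role of the extra point.

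\emph{Key pointwise inequality.} Set $F(z)=\sum_j \frac{1}{f(z)-a_j}$, and let $\delta=\min_{i\ne j}|a_i-a_j|$. At any point where $f$ is within $\delta/(2q)$ of some $a_j$, the other terms of $F$ are bounded. This gives
\[
\sum_j \log^+\frac{1}{|f-a_j|}\le \log^+|F|+O(1).
\]
Next write $F=\frac{1}{f'}\sum_j \frac{f'}{f-a_j}$. Integrating over $|z|=r$ yields
\[
\sum_j m(f,a_j,r)\le m\Big(\frac1{f'},\infty,r\Big)...
\]
More precisely, it yields
\[
\sum_j m(f,a_j,r)\le m(1/f',r)+\sum_j m\Big(\frac{f'}{f-a_j},r\Big)+O(1).
\]

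\emph{Passing to $T(f',r)$.} Apply Theorem \ref{FirstMainTheorem} to $f'$ at the value $0$, using the Jensen formula to write
\[
m(1/f',r)=T(f',r)-N(1/f',r)+O(1).
\]
Then bound $T(f',r)=m(f',r)+N(f',\infty,r)$ using
\[
m(f',r)\le m(f,r)+m(f'/f,r).
\]
Combining these with $m(f,a_j,r)=T(f,r)-N(f,a_j,r)+O(1)$ and $m(f,r)=T(f,r)-N(f,\infty,r)$ produces exactly the stated inequality. The counting terms rearrange into $2N(f,\infty,r)-N(f',\infty,r)+N(1/f',r)=N_{ram}$, and the remaining error is
\[
S(r)=m(f'/f,r)+\sum_j m\Big(\frac{f'}{f-a_j},r\Big)+O(1).
\]

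\emph{Main obstacle: the logarithmic derivative lemma.} It remains to show $m(g'/g,r)=o(T(g,r))$ for $g=f$ and $g=f-a_j$, noting that $T(f-a_j,r)=T(f,r)+O(1)$. The plan follows Nevanlinna:
\begin{enumerate}
\item Differentiate the Poisson--Jensen formula for $\log|g|$ to obtain an explicit expression for $g'/g$ on $|z|=r<\rho$. This expression is a Poisson-kernel integral plus a sum over the zeros and poles in $D_\rho$.
\item Bound it pointwise, integrate $\log^+$, and use the concavity of $\log$ (via Jensen's inequality for the measure $d\theta/2\pi$) together with the integrability of $\log\frac{1}{|re^{i\theta}-b|}$.
\item This gives
\[
m(g'/g,r)\le O\Big(\log^+T(g,\rho)+\log^+\rho+\log^+\frac1{\rho-r}\Big)+O(1).
\]
\item Choose $\rho=r+1/T(g,r)$ and apply Borel's growth lemma: $T(r+1/T(r))\le 2T(r)$ outside a set of finite measure. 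This yields $S(r)=O(\log T(f,r)+\log r)$ off an exceptional set.
\item For transcendental $f$ we have $\log r=o(T(f,r))$, so $S(r)=o(T(f,r))$.
\end{enumerate}
The statement's $o(T(f,r))$ is therefore to be read with the usual finite-measure exceptional set for $R=\infty$. For $R<\infty$ the analogous Borel lemma holds with $\log\frac{1}{R-r}$ terms, which are likewise absorbed.
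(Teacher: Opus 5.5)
The paper gives no proof of this statement. It is quoted as classical background, and the proof is deferred to \cite{cherry2001nevanlinna}. Your outline is Nevanlinna's standard route, and its algebraic part is correct. The pointwise inequality for $F=\sum_j(f-a_j)^{-1}$, the factorization $F=\frac1{f'}\sum_j\frac{f'}{f-a_j}$, the First Main Theorem for $f'$ at $0$, and $m(f',r)\le m(f,r)+m(f'/f,r)$ combine to give the $(q-1)$ inequality for finite $a_j$, with $N_{ram}$ appearing exactly. That implies the stated $(q-2)$ form, because $T(f,r)\ge 0$ for $r\ge 1$. The M\"obius step is unnecessary: if some $a_j=\infty$, apply the finite-point inequality to the other $q-1$ points. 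As phrased it also does not preserve the inequality, since $N(f,\infty,r)$ becomes $N(f,c,r)$ and you would additionally need M\"obius invariance of $N_{ram}$. Your remark about the finite-measure exceptional set is correct.

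The gap is in step (2), which is the one genuinely analytic point. On $|z|=r$ you have $|g'(z)/g(z)|\le A+\sum_{\mu=1}^{n}|z-c_\mu|^{-1}$ with $n=n(\rho,g)+n(\rho,1/g)$. The lemma hinges on this sum costing only $O(\log n)$ in $m(g'/g,r)$, and the tools you name do not achieve that as stated.
\begin{itemize}
\item Using the integrability of $\log\frac1{|re^{i\theta}-b|}$ term by term, after $\log^+\sum\le\sum\log^++\log n$, gives a bound linear in the number of zeros and poles within distance $1$ of $|z|=r$ (roughly that number divided by $r$). This is not $o(T(f,r))$ in general; for $e^{e^z}-1$ it is already of the order of $T$.
\item Applying concavity directly to $\sum_\mu|re^{i\theta}-c_\mu|^{-1}$ fails, because $\int_0^{2\pi}|re^{i\theta}-b|^{-1}\,d\theta$ diverges for $|b|=r$ and is unbounded as $|b|\to r$.
\end{itemize}
The missing ingredient is a fractional power. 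For fixed $0<\alpha<1$,
\[
\frac1{2\pi}\int_0^{2\pi}\log^+X\,d\theta\le\frac1\alpha\log\Bigl(1+\frac1{2\pi}\int_0^{2\pi}X^\alpha\,d\theta\Bigr),\qquad X^\alpha\le A^\alpha+\sum_{\mu}|z-c_\mu|^{-\alpha}.
\]
Combine this with the uniform estimate $\frac1{2\pi}\int_0^{2\pi}|re^{i\theta}-b|^{-\alpha}\,d\theta\le C_\alpha r^{-\alpha}$ for all $b\in\mathbb C$. This gives $m(g'/g,r)\le\log^+A+O(\log^+n+\log^+\frac1r)+O(1)$.

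You then need one more radius $\rho'>\rho\ge 1$ and the bound $n(\rho)\le N(\rho')/\log(\rho'/\rho)$ to convert $\log^+n$ into $\log^+T$. So step (3) holds with $T(g,\rho')$ and $\rho'-r$ in place of $T(g,\rho)$ and $\rho-r$.

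Finally, your claim that for $R<\infty$ the $\log\frac1{R-r}$ terms are ``likewise absorbed'' is false without a growth hypothesis. They are $o(T(f,r))$ only for admissible $f$, e.g.\ when $\log\frac1{R-r}=o(T(f,r))$. For functions of bounded characteristic in the disc nothing is absorbed, and ``transcendental'' has no meaning there.
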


The derivations and proofs of these theorems are covered thoroughly in \cite{cherry2001nevanlinna}. 

\section{Slice Regularity}\label{qanal}

The earliest attempts to define a notion of holomorphicity proceeded by generalizing the Cauchy--Riemann operators. \cite{fueter35} defined a quaternionic function to be \emph{regular} if it solves the equation \[\frac{\partial f}{\partial \conj q}=\frac14\left(\frac{\partial }{\partial x_0}+i\frac{\partial }{\partial x_1}+j\frac{\partial }{\partial x_2}+k\frac{\partial }{\partial x_3}\right)f\equiv 0,\] where $x_0,x_1,x_2,x_3$ are the coordinates of the identification of $\hh$ with $\rr^4$. Fueter-regular functions enjoy many of the key properties of holomorphic functions. Thus, the Cauchy--Riemann system can be replaced by the Cauchy--Fueter system, and the notion of Fueter regularity has been well developed and applied (see \cite{sudbery1979, gurlebeck1990, kravchenko1996} for example). 

However, there are severe limitations to Fueter-regularity that make it a less than desirable generalization of holomorphicity. The strictness of the Cauchy--Fueter condition excludes many desirable functions and in fact does not even include the polynomials\footnote{Due to noncommutativity, we consider the one-sided polynomials.} in the variable $q$. Even the identity function $f(q)=q$ is not Fueter-regular, because $\frac{\partial}{\partial \conj q} q=-\frac12$. \cite{fueter34} attempted to resolve this issue by considering the class of \emph{quaternionic holomorphic functions}, which satisfy Laplace's equation in four real variables \[\frac{\partial}{\partial \conj q}\Delta f(q)=0,\] but this class of functions is extremely large, as it includes the whole class of harmonic functions of four real variables. 

\cite{GENTILI2007279} considered the following decomposition. Note that $\hh = \rr +i\rr + j\rr + k\rr$, and the set of quaternions satisfying $q^2=-1$ forms a 2-sphere. Namely, we define \[\s\coloneq \{q\in\hh : q^2=-1\}.\] More generally, we let $\s_q\coloneq\s_{x+Iy}\coloneq x +y \s$.

This introduces the following interesting geometry. Let $I\in\s$. Considering the set $L_I\coloneq \rr + I\rr$, we remark that $L_I$ can be identified with the complex plane $\cc$, and $\hh = \bigcup_{I\in\s} (\rr + I\rr)$. Since each $L_I$ is isomorphic to $\cc$, we can define a holomorphic derivative on each slice.

\begin{figure}[ht]
\centering
\begin{tikzpicture}[scale=1]
  \def\r{3}
  \node[circle, fill, inner sep=1] (orig) at (0,0) {};
  \def\vx{\r/1.6} \def\vy{\r*0.5}
  \draw[thick,<->] 
    ($(orig) - 1.3*(\vx,\vy)$) -- ($(orig) + 1.3*(\vx,\vy)$) node[right] {$L_I$}; 
  \node[left=45pt, above=15pt] at (orig) {$\s$};
  \fill[gray!20, opacity=0.5] (orig) circle (\r/2);
  \draw (orig) circle (\r/2);
  \draw[dashed] (orig) ellipse (\r/2 and \r/6);

\begin{scope}
  \clip (-\r, -0.7*\r) rectangle (\r, 0.7*\r);
\foreach \y in {-2,-1,...,2} {
  \draw[thin,gray!50] 
    ($(orig) + (-\r,0) + (0,\y*0.3*\r)$) -- ++(2*\r,0);
}

\foreach \x in {-4, -3,-2,...,3, 4} {
  \draw[thin,gray!50] 
    ($(orig) + (2*\r/5,2*\r/3) + (\x*0.3*\r,0)$) -- ++(-4*\r/5,-4*\r/3);
}
\end{scope}

  \draw[<->] ($(orig) + (2*\r/5, 2*\r/3)$) -- ++(-4*\r/5, -4*\r/3) node[below] {$\Im_k(q)$};
  \draw[<->] ($(orig) + (-\r,0)$) -- ++(2*\r, 0) node[right] {$\Im_j(q)$};
  \draw[<->] ($(orig) + (0,-\r)$) -- ++(0, 2*\r) node[above] {$\Im_i(q)$};

\end{tikzpicture}
\caption{The complex line (slice) $L_I$. When restricted to the imaginary axes, $L_I$ is simply a line passing through the imaginary unit $I$ on $\s$. The line functions as the imaginary axis of the full slice.}
\end{figure}
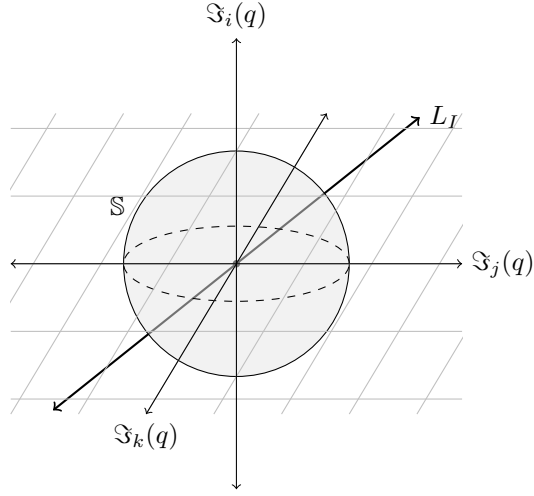

\begin{defin}
    Let $f:\Omega\to\hh$ be a quaternion valued function. For each $I\in\s$, let $\Omega_I=\Omega\cap L_I$ and $f_I=f_{\mid_{\Omega_I}}$ be the restriction of $f$ to $\Omega_I$ so that $f_I:\Omega_I \to \hh$. Then, $f_I$ is holomorphic if \[\conj \partial _I f(x+yI)\coloneq \frac12 \left(\frac{\partial }{\partial x}+I\frac{\partial }{\partial y}\right)f_I(x+yI)\equiv 0.\]
\end{defin}

\begin{defin}
    Let $f:\Omega\to\hh$ be a quaternion valued function. The function $f$ is called (slice-) regular if for every $I\in\s$, $f_I$ is holomorphic. 
\end{defin}
In other words, $f$ is holomorphic when restricted to any slice.

Because slice regularity is a relatively local condition, (in that it is defined slice-wise), there are several immediate pathologies due to lack of continuity across slices. Consider the following example:

\begin{example}[\cite{GENTILI2007279}, Example 1.11]
    Let $I\in\s$ and $f:\hh\setminus\rr\to\hh$ be defined as follows: 
    \[f(q)=\begin{cases}
        0 &\text{if} \quad q\in\hh\setminus L_I \\
        1 &\text{if} \quad q\in L_I \setminus \rr
    \end{cases}.\]
    This function is clearly regular despite not being continuous across slices.  
\end{example}

Fortunately, these issues can be resolved by imposing conditions on the domain. 

\begin{defin}[\cite{GENTILI2007279}, Definition 1.12]
    Let $\Omega$ be a domain in $\hh$. Then $\Omega$ is called a slice domain if it intersects the real axis, and if, for all $I\in\s$, its intersection $\Omega_I$ with the complex plane $L_I$ is connected. 
\end{defin}

This definition essentially forces connectedness of the domain $\Omega$. The second statement guarantees $\Omega$ is connected on any given slice, and to ensure that the slices themselves are connected, we force $\Omega \cap \rr$ to be nonempty, because this is precisely where the slices intersect. 

\begin{defin}
    Let $\Omega$ be a slice domain. If for all points $x+yI\in\Omega$, with $x,y\in \rr$ and $I\in \s$, $\Omega$ contains the whole sphere $x+y\s$, then $\Omega$ is a symmetric slice domain. 
\end{defin}

Symmetric slice domains are often easier to work with, as we are free to choose any member of a sphere $\s_q$ without worrying if it is not contained in the domain. 

The original framework by \cite{GENTILI2007279} has been modified to the alternative $*$-algebras via the notion of \emph{stem functions} and more general \emph{slice functions}. The regular functions defined this way are well-behaved over a larger class of domains. This was originally introduced by \cite{Ghiloni_2011}, though the exposition we provide here is based off of \cite{Perotti_2019} and \cite{Altavilla_2019}. 

Consider the algebra of complex quaternions: \[\hh_\cc\coloneq \hh \otimes_\rr\cc\coloneq \{p+\imath q\mid p,q\in\hh, \imath: \imath^2=-1\}.\] Now let $f:\hh \to \hh$ be a (left) polynomial function defined by $f(q)=\sum_n q^na_n$, with $q, a_i\in\hh$. Now let $z=x+\imath y\in\cc$, and define the lifted polynomial $F:\cc\to\hh_\cc$ by $F(z)=\sum_n z^na_n$. Now define the embedding $\Phi_J:\hh_\cc\to\hh$ by $\Phi_J(x+\imath y)\coloneq x+Jy$ for $J\in\s$. This suggests the following commutative diagram:
\[
\begin{CD}\label{CD}
\cc \simeq \rr \otimes_\rr\cc @>F>> \hh_\cc=\hh\otimes_\rr \cc \\
@V\Phi_j VV   @VV \Phi_j V \\
\hh @>>f> \hh
\end{CD}.
\]

Observe that the polynomial $f$ can be generated by the mapping $F:\cc\to\hh_\cc$. As such, in the more general case, we are motivated to enforce holomorphicity on $F$ to attain a regular function $f$ that is slice-wise regular by the embedding $\Phi_J$. These functions are defined on the following domains:

\begin{defin}
    Let $D\subseteq \cc$ be symmetric to the real axis. We define the circularization of $D$ to be the $\Omega_D\subseteq\hh$ by \[\Omega_D\coloneq \bigcup_{J\in\s}\Phi_J(D)=\{x+Jy \mid x +iy\in D, J\in\s\}.\] Such sets are called circular sets or circular domains. 
\end{defin}

We remark that $\Omega_D$ is symmetric with respect to the real axis, but we do not require $\Omega_D \cap \rr$ to be nonempty. Also, it is not restrictive to have $D$ be symmetric to the real axis, as regardless, $\Omega_D$ will contain $q^c$ for any $q\in\Omega_D$ by the circularization property. 

\begin{defin}
    Let $D\subseteq \cc$ be any symmetric set with respect to the real line. Let $F=F_1+\imath F_2:D \to \hh_\cc$. If $F(\conj z)=\conj{F(z)}$, $F$ is a stem function. 
\end{defin}

\begin{defin}
    Let $f:\Omega_D\to\hh$. We say $f$ is a (left) slice function if it is induced by a stem function $F=F_1+\imath F_2:D \to \hh_\cc$ such that for any $I \in \s$, $x +Iy\in\Omega_D$, \[f(x + Iy)=F_1(x+iy)+IF_2(x +iy).\] The slice function $f$ generated by a stem function $F$ is denoted $\mathcal{I}(F)$.  
\end{defin}

Notice that any quaternion $x+Iy$ can also be written as $x+(-I)(-y)$, so requiring $F(\conj z)=\conj{F(z)}$ ensures the induced slice function is well-defined, independent of the choice of representation. Furthermore, the commutative diagram \ref{CD} holds, with the polynomial $f$ and the lifted polynomial $F$ replaced by the slice function $f$ and the stem function $F$. 

Finally, a function $f=\mathcal{I}(F)$ is (left) regular if its stem function $F$ is holomorphic. In the case where $\Omega_D$ is a slice domain, this definition coincides exactly with that given by \cite{GENTILI2007279}. Note that the family of circular domains contains all symmetric slice domains. From now on, $\Omega_D$ is always taken to be a circular domain. 

We also have a natural definition for \emph{slice derivatives}. 

\begin{defin}
    Let $f$ be a slice function. We define the slice derivative (or merely derivative) $\frac{\partial f}{\partial q}$ and conjugate slice derivative $\frac{\partial f}{\partial \conj q}$ as the slice functions \[\frac{\partial f}{\partial q}\coloneq \mathcal{I}\left(\frac{\partial f}{\partial z}\right) \quad \text{and} \quad \frac{\partial f}{\partial \conj q}\coloneq \mathcal I \left(\frac{\partial F}{\partial \conj z}\right).\] 
\end{defin}

\begin{remark}
    A slice function $f$ is (slice) regular if and only if $\frac{\partial f}{\partial \conj q}\equiv 0$. 
\end{remark}

With this notion of regularity, we can define an appropriate notion of semiregularity generalizing the meromorphic functions on $\cc$. 

\begin{defin}
    A function $f:\Omega_D\to\hh$ is semiregular if it is regular in a symmetric slice domain $\Omega_D' \subseteq \Omega$ such that every point of $\Omega_D\setminus \Omega_D'$ is a pole or removable singularity of $f$. 
\end{defin}
\begin{remark}[\cite{gentili2022}, Remark 5.22]
    If $f$ is semiregular in $\Omega$, then the set of its nonremovable poles consists of isolated real points or isolated spheres. 
\end{remark}
Thus, we operate under the assumption that $\mathcal{P}(f)$ consists of real points and isolated spheres. 

We shall not entertain a full exposition of quaternionic analysis. The interested reader should refer to the book \cite{gentili2022}. 

\subsection{Lemmas on Slice Regularity}

We collect here some definitions and lemmas concerning slice-regular functions as needed throughout the paper.

\begin{defin}
    A slice function $f=\mathcal{I}(F):\Omega_D\to\hh$, $F=F_1+\imath F_2$ is called slice-preserving if $F_1$ and $F_2$ are real valued. Equivalently, $f_I(\Omega_D)\subseteq L_I$ for all $I\in\s$.  
\end{defin}

\begin{defin}
    Let $f,g:\Omega_D\to\hh$ be regular functions induced by $F=F_1+\imath F_2:\cc \to\hh_\cc$ and $G=G_1+\imath G_2:\cc \to\hh_\cc$. Then the $*$-product is the slice function defined by \[f*g\coloneq \mathcal{I}(FG),\] where $FG$ is the associative pointwise product over $\hh_\cc$. Furthermore, $f^{n*}$ refers to the $n$-times product $\underbrace{f*\dots*f}_{n\text{ times}}$.
\end{defin}

It is common to refer to the above operation as the regular product or the slice product as well. The following formula relating the $*$-product and the pointwise quaternionic product is well known. 

\begin{prop}[\cite{gentili2022}, Theorem 3.4]
    Let $f,g:\Omega_D\to\hh$ be regular functions. For all $q\in\Omega_D$, if $f(q)\neq0$, then \[(f*g)(q)=f(q)g(f(q)^{-1}qf(q)).\] If $f(q)=0$, then $(f*g)(q)=0$.  
\end{prop}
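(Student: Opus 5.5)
We will prove the formula on each slice, working at the level of stem functions, and then remove the dependence on the slice. Fix $q=x+Iy\in\Omega_D$ with $I\in\s$, and write $f=\mathcal I(F)$, $g=\mathcal I(G)$ with $F=F_1+\imath F_2$ and $G=G_1+\imath G_2$. Put $z=x+\imath y$.

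\emph{Expanding the product.} Expanding $FG$ in $\hh_\cc$, and using that $\imath$ is central and commutes with $\hh$, gives
\[
FG=(F_1G_1-F_2G_2)+\imath(F_1G_2+F_2G_1).
\]
Hence
\[
(f*g)(q)=F_1G_1-F_2G_2+I(F_1G_2+F_2G_1),
\]
with everything evaluated at $z$. Next we group terms, taking care never to commute $I$ past $F_2$:
\[
F_1G_1+IF_2G_1+F_1IG_2-F_2G_2+\bigl(IF_1-F_1I\bigr)G_2+\bigl(-F_2+F_2\bigr)\cdots .
\]
The cleaner route is to write
\[
(f*g)(q)=F_1\bigl(G_1+IG_2\bigr)+IF_2\bigl(G_1+IG_2\bigr)+\bigl[IF_1-F_1I\bigr]G_2+\bigl[-F_2-IF_2I\bigr]G_2 .
\]
One checks that this regroups as
\[
(f*g)(q)=f(q)\,G_1+\bigl(F_1I+IF_2I\bigr)\cdots .
\]
Rather than push this through, the key identity we aim for is
\[
(f*g)(q)=f(q)G_1(z)+\bigl(F_1I-F_2\bigr)G_2(z),
\]
which follows directly from the displayed expansion, since $F_1IG_2$ and $IF_1G_2$ differ only in the order of factors. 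The correct expansion is in fact $F_1G_1-F_2G_2+IF_1G_2+IF_2G_1$. Here we must be careful: the product $\mathcal I(FG)$ places $I$ on the left, so the computation should be done through $(f*g)(q)=F_1G_1-F_2G_2+I(F_1G_2+F_2G_1)$ exactly as stated.

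\emph{Introducing the conjugated unit.} Suppose $f(q)\neq0$ and set $J=f(q)^{-1}If(q)\in\s$. Then $f(q)J=If(q)$, that is,
\[
f(q)J=IF_1+IIF_2=IF_1-F_2 .
\]
It follows that
\[
f(q)\,g\bigl(x+Jy\bigr)=f(q)G_1+f(q)JG_2=(F_1+IF_2)G_1+(IF_1-F_2)G_2,
\]
which is exactly the expansion of $(f*g)(q)$ obtained above. Finally,
\[
f(q)^{-1}qf(q)=x+yf(q)^{-1}If(q)=x+Jy,
\]
so $(f*g)(q)=f(q)\,g\bigl(f(q)^{-1}qf(q)\bigr)$, and the case $f(q)\neq0$ is done.

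\emph{The case $f(q)=0$.} This is the expected obstacle, since the conjugation is undefined and we need $(f*g)(q)=0$ regardless of $g$. From $F_1+IF_2=0$ we get $F_1=-IF_2$. Then
\[
IF_1-F_2=-I^2F_2-F_2=0 .
\]
Both coefficients $F_1+IF_2$ and $IF_1-F_2$ therefore vanish, so
\[
(f*g)(q)=(F_1+IF_2)G_1+(IF_1-F_2)G_2=0 .
\]
The only delicate point is tracking left multiplication by $I$ correctly in the definition of $\mathcal I$; regularity itself is not used beyond the functions being slice functions.
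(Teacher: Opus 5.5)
Your final argument is correct, but the middle of the first step should be deleted. The regroupings that trail off in ``$\cdots$'' add nothing. Worse, the claimed identity $(f*g)(q)=f(q)G_1+(F_1I-F_2)G_2$, justified by saying $F_1IG_2$ and $IF_1G_2$ ``differ only in the order of factors,'' is false in $\hh$. The correct coefficient, which your later computation actually uses, is $IF_1-F_2=I\,f(q)$. Once that is cleaned up, the whole proof is the single identity
\[
(f*g)(q)=f(q)\,G_1(z)+I\,f(q)\,G_2(z),\qquad q=x+Iy,\quad z=x+\imath y,
\]
read off from $\mathcal I(FG)$. If $f(q)\neq0$, combine it with $If(q)=f(q)J$, where $J=f(q)^{-1}If(q)\in\s$. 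If $f(q)=0$, both terms vanish at once; your separate check $IF_1-F_2=0$ is just $If(q)=0$.

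The paper gives no proof of this proposition; it imports it from \cite{gentili2022}, which works with regular functions on symmetric slice domains rather than with stem functions. The classical argument in that setting starts from power series. On a ball, with $g(q)=\sum_n q^n b_n$, one has
\[
(f*g)(q)=\sum_n q^n f(q)\,b_n=f(q)\sum_n\bigl(f(q)^{-1}qf(q)\bigr)^n b_n .
\]
Your route instead works directly with the definition $f*g=\mathcal I(FG)$ that this paper adopts, and it is purely pointwise algebra. It uses neither holomorphy of $F,G$, nor series expansions, nor the assumption that $\Omega_D$ meets $\rr$; circularity is needed only to guarantee $x+Jy\in\Omega_D$. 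So it proves the formula for arbitrary slice functions on any circular domain. In the paper's notation it reads $(f*g)(q)=f(q)g^\circ_s(q)+\Im(q)f(q)\,g'_s(q)$ for $q\notin\rr$. The power-series argument is tied to regular functions on a ball, or needs extension machinery beyond it. In exchange, it shows at the level of coefficients why the conjugated point $f(q)^{-1}qf(q)$ appears: $q^n$ passes to the left of $f(q)$.
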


It should also be noted that if $f$ is slice-preserving and $g$ is a slice function, then $f*g=fg$, and $g*f=gf$. In other words, the $*$-product coincides with the pointwise product when one of the factors is slice-preserving.

\begin{remark}
    Given any two quaternions $a,b\in\hh$, the element $a^{-1}ba$ belongs to $\s_b$. 
    In other words, conjugation by a quaternion preserves the spherical set of $b$.
\end{remark}

\begin{defin}
    Let $f=\mathcal{I}(F):\Omega_D\to\hh$, with $F=F_1+\imath F_2:\cc \to\hh_\cc$. The conjugate $f^c$ of $f$ is the slice function defined by \[f^c\coloneq \mathcal{I}(F^c)=\mathcal{I}(F_1^c+\imath F_2^c),\] where conjugation is with respect to $\imath$ in $\hh_\cc$. The symmetrization $f^s$ of $f$ is the slice function defined by \[f^s\coloneq f*f^c=f^c*f.\]  
\end{defin}

\begin{remark}
    The symmetrization $f^s$ is always slice-preserving. 
\end{remark}

\begin{defin}
    Let $f:\Omega_D\to\hh$ be a regular function. If $f\not\equiv 0$, the $*$-reciprocal of $f$ is the semiregular function $f^{-*}:\Omega_D\to\hh$ defined by \[f^{-*}=\frac{1}{f^s}f^c.\] Observe that $f*f^{-*}=f^{-*}*f=1$ in $\Omega_D\setminus{\mathcal{P}(f^s)}$. 
\end{defin}

When introducing the stem function framework for slice regularity, \cite{Ghiloni_2011} also introduced the following two operators. 
\begin{defin}
    Let $f:\Omega_D\to\hh$ be a regular function. The function $f^\circ_s : \Omega_D \to \hh$ defined by \[f^\circ_s\coloneq \frac{1}{2}(f(q)+f(\conj q))\] is called the spherical value of $f$. The function $f_s':\Omega_D\setminus\rr\to \hh$ defined by \[f'_s\coloneq \frac12 \Im(q)^{-1}(f(q)-f(\conj q))\] is called the spherical derivative of $f$, where $\Im q$ is the standard imaginary part of $q\in\hh$.  
\end{defin}
Let $q=x +Iy\in\Omega_D$, and $z=x +\imath y\in D$. Then $f^\circ_s(q)=F_1(z)$ and $f'_s(q)=y^{-1}F_2(z)$, where $f=\mathcal I (F)=\mathcal I (F_1+\imath F_2)$. Hence, the two spherical functions defined above are slice functions, and more importantly, are constant on every $\s_q\in\hh$. Moreover, these two functions admit the decomposition of $f$ as \[f(q)=f^\circ_s(q)+\Im (q)f'_s(q).\]

\section{A Quaternionic Jensen Formula}\label{PJSection}

Nevanlinna Theory is underpinned by the Jensen formula, as it provides the foundation for the construction of the Nevanlinna functions. In this section, we introduce several necessary refinements to the known Jensen formula for semiregular functions. 

\begin{teo}[Classical Jensen Formula]
Let $f\not\equiv 0, \infty$ be a meromorphic function on $\DR$. Let $\{a_i\}_{i=1}^p$ denote the zeroes of $f$ in $\DR$, counted with multiplicity, and $\{b_i\}_{i=1}^q$ denote the poles of $f$ in $\DR$, also counted with multiplicity. Then,
\begin{align*}
\log|f(0)| &= \frac{1}{2\pi} \int_0^{2\pi} \log|f(Re^{i\theta})| \, d\theta
- \sum_{i=1}^p \log \frac{R}{|a_i|} 
+ \sum_{i=1}^q \log \frac{R}{|b_i|}.
\end{align*}
\end{teo}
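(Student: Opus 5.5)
The plan is to reduce to the zero-free, pole-free case and then peel off the zeros and poles one at a time with Blaschke-type factors. For simplicity I assume $f$ has no zeros or poles on the circle $|z|=R$ and that $f(0)\neq 0,\infty$. These are the cases in which the displayed formula makes sense as written. The boundary case follows by a limiting argument, since $\log|R e^{i\T}-a|$ has a removable-in-$L^1$ logarithmic singularity whose circle mean is continuous in $a$.

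First, suppose $g$ is holomorphic and nonvanishing on a neighborhood of $\conj{\DR}$. Then $g$ has a holomorphic logarithm $h$ there, because the disc is simply connected. Hence $\log|g|=\operatorname{Re} h$ is harmonic. The mean value property for harmonic functions (or Cauchy's integral formula applied to $h$, taking real parts) gives
\[\log|g(0)|=\frac1{2\pi}\int_0^{2\pi}\log|g(Re^{i\T})|\,d\T.\]

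Second, I remove the zeros and poles. For $|a|<R$, $a\neq0$, I use the Blaschke factor
\[B_a(z)=\frac{R(z-a)}{R^2-\conj a z}.\]
It is holomorphic on a neighborhood of $\conj{\DR}$, has a single simple zero at $a$, and satisfies $|B_a(Re^{i\T})|=1$. The last fact follows from $|R^2-\conj a Re^{i\T}|=R|R-\conj a e^{i\T}|=R|Re^{i\T}-a|$. I then set
\[g(z)=f(z)\prod_{j=1}^q B_{b_j}(z)\Big/\prod_{i=1}^p B_{a_i}(z).\]
This $g$ is holomorphic and zero-free on a neighborhood of $\conj{\DR}$. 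It has finitely many zeros and poles in the compact closed disc, since $f\not\equiv0,\infty$. Applying the first step to $g$, the boundary integral of $\log|g|$ equals that of $\log|f|$, because each $|B|=1$ there. Meanwhile $\log|g(0)|=\log|f(0)|+\sum\log|B_{b_j}(0)|-\sum\log|B_{a_i}(0)|$, with $|B_a(0)|=|a|/R$. Rearranging yields exactly the stated formula, with each zero contributing $-\log(R/|a_i|)$ and each pole contributing $+\log(R/|b_i|)$.

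The main obstacle is the boundary case: zeros or poles on $|z|=R$, where $B_a$ degenerates. I would handle it by applying the formula on radii $r<R$ that avoid the finitely many boundary moduli and letting $r\to R$. The counting terms are continuous in $r$. For the integral term I would verify directly that $\frac1{2\pi}\int_0^{2\pi}\log|Re^{i\T}-a|\,d\T=\log R$ for $|a|=R$. This is the standard computation $\int_0^{2\pi}\log|1-e^{i\T}|\,d\T=0$, and it justifies passage to the limit factor by factor, writing $f$ locally as a product of such linear factors times a zero-free function.
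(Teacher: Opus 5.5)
The paper quotes this classical result as background and does not prove it, so there is no argument of its own to compare against. Your proof is correct and is the standard one: you treat the zero-free case by the harmonicity of $\log|g|$, then cancel the zeros and poles with Blaschke factors of modulus one on $|z|=R$ and compare values at $0$, and you correctly make explicit the implicit hypotheses ($f$ meromorphic on a neighborhood of the closed disc, $f(0)\neq 0,\infty$). It is also exactly the one-variable template of the paper's proof of Theorem~\ref{Jensen}, where spherical Blaschke factors applied to $f^s$ and the biharmonic mean value identity play the roles of your $B_a$ and the harmonic mean value property.
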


The development of a Jensen formula has been a significant recent pursuit. In \cite{perottiJensen2019}, Perotti derived a general Jensen formula in the context of regular functions, building upon a similar result obtained by \cite{Altavilla_2019}. We begin by presenting this result.

\begin{defin}
    Let $f:\Omega_D\to\hh$ be a nonconstant semiregular function. Let $S_f:\Omega \setminus \mathcal {ZP}(f^s) \to \hh$ be defined by \[S_f(q)\coloneq \begin{cases}
        f'_s(q)f(q)^{-1}\conj q f(q) f'_s(q)^{-1} &\text{if} ~q\not\in \overline{\mathcal {Z}(f'_s)} \\
        \conj q &\text{if} ~q\in\overline{\mathcal {Z}(f'_s)} 
    \end{cases}.\] Note that $S_f$ is a diffeomorphism of $\Omega_D\setminus(\mathcal Z (f^s) \cup \overline{\mathcal Z (f'_s)})$. 
\end{defin}
\begin{remark}
    The set $\mathcal Z (f'_s)$ is typically called the degenerate set of $f$, denoted $D_f$, and represents all the values of $f$ where it is constant over an entire 2-sphere. 
\end{remark}
We refer to the above function as the \emph{spherical conjugate} with respect to $f$. Note that if $f$ is slice-preserving, $S_f(q)=\conj q$ for all $q$. The spherical conjugate admits the following decomposition of $\log|f(z)|$: 

\begin{corr}\label{sphericalConj}
    Let $f$, $S_f$ be defined as above, and let $\mathbb{B}_R\coloneq\mathbb{B}(0,R), R\in(0,\infty)$ be an open ball whose closure is contained in $\Omega_D$. Then, \[\log |f^s(x)|=\log|f(x)|+\log|f(S_f(x))|.\]
\end{corr}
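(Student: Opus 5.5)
The plan is to compute $f^s(x)$ pointwise from the product formula for the $*$-product and then take moduli, using that the quaternionic norm is multiplicative. By definition $f^s=f*f^c$. Fix $x$ in the domain of $S_f$, so that $x\notin\mathcal{ZP}(f^s)$; in particular $f(x)\neq 0$. The product formula (\cite{gentili2022}, Theorem 3.4) gives
\[
f^s(x)=f(x)\,f^c\bigl(f(x)^{-1}xf(x)\bigr).
\]
Since $|ab|=|a||b|$ in $\hh$, this yields $\log|f^s(x)|=\log|f(x)|+\log|f^c(y)|$ with $y\coloneq f(x)^{-1}xf(x)\in\s_x$ by the Remark on conjugation. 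What remains is to identify $|f^c(y)|$ with $|f(S_f(x))|$.

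For this, write $x=\alpha+I\beta$ with stem function $F=F_1+\imath F_2$, so that $f(\alpha+J\beta)=F_1+JF_2$ and $f^c(\alpha+J\beta)=\conj{F_1}+J\conj{F_2}$ for every $J\in\s$. Taking quaternionic conjugates gives $\conj{f^c(\alpha+J\beta)}=F_1-F_2J=F_1+F_2(-J)$. Moreover, since $f^s$ is slice-preserving and $f^s=f^c*f$ as well, the value $|f^c(y)|$ depends only on the sphere point $y$. I will compare $F_1-F_2J$ with $f(\alpha+K\beta)=F_1+KF_2$. When $F_2=\beta f'_s(x)\neq 0$, i.e.\ $x\notin\overline{\mathcal Z(f'_s)}$, the identity
\[
F_1-F_2J=F_2\bigl(F_2^{-1}F_1F_2-J\bigr)F_2^{-1}\cdot\text{(rearranged)}
\]
shows that $F_1-F_2J=F_1+KF_2$ holds with $K=-F_2JF_2^{-1}$, which again lies in $\s$. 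Hence
\[
|f^c(\alpha+J\beta)|=|f(\alpha+K\beta)|,\qquad K=-f'_s\,J\,f'_s{}^{-1}.
\]
Apply this with $\alpha+J\beta=y=f(x)^{-1}xf(x)$, so that $J=f(x)^{-1}If(x)$. Then
\[
\alpha+K\beta=f'_s(x)\,f(x)^{-1}(\alpha-I\beta)f(x)\,f'_s(x)^{-1}=f'_s(x)f(x)^{-1}\conj{x}f(x)f'_s(x)^{-1}=S_f(x),
\]
using that conjugation fixes $\alpha$ and that $f'_s$ is constant on $\s_x$. This matches the first case of the definition of $S_f$.

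In the degenerate case $x\in\overline{\mathcal Z(f'_s)}$, we have $F_2=0$ (by continuity), so $f$ is constant on $\s_x$. Then $f^c(y)=\conj{F_1}$ and $f(\conj x)=F_1$, which have the same modulus, consistent with $S_f(x)=\conj x$.

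The main obstacle is the middle step: getting the conjugation identity $|f^c(\alpha+J\beta)|=|f(\alpha+K\beta)|$ right, with correct sidedness. The values $F_1,F_2$ are quaternions and do not commute with $J$, so I would verify the identity carefully by writing $F_1-F_2J=F_1+(-F_2JF_2^{-1})F_2$ directly; this is immediate and avoids any rearrangement. Once it is in hand, the rest is bookkeeping. The hypothesis about the ball $\mathbb{B}_R$ plays no role beyond ensuring that $x$ lies in the domain.
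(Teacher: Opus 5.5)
Your argument is correct. The paper states this corollary without proof (it is imported from Perotti's Jensen paper), so there is no proof of its own to compare against. Your derivation is the natural one: the product formula $f^s(x)=f(x)\,f^c\bigl(f(x)^{-1}xf(x)\bigr)$, combined with the sphere-wise identity $\overline{f^c(\alpha+J\beta)}=F_1-F_2J=F_1+(-F_2JF_2^{-1})F_2$, is exactly the computation that produces the defining formula for $S_f$. Your conjugation bookkeeping at the end checks out.

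A few things need tidying.
\begin{enumerate}
\item The displayed ``rearranged'' identity is false as written, since its right-hand side equals $F_1-F_2JF_2^{-1}$. Delete it and keep the direct identity you give in the last paragraph.
\item The sentence claiming that slice-preservation of $f^s$ makes $|f^c(y)|$ depend only on $y$ is vacuous and unused; drop it.
\item Real points $x\notin\overline{\mathcal Z(f'_s)}$ fall between your two cases, because there $F_2=0$ even though the extended $f'_s$ need not vanish. They are trivial to handle: $y=x=S_f(x)$ and $f^s(x)=f(x)\overline{f(x)}$.
\end{enumerate}

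There is also a more substantive point. The product formula you cite is stated for regular functions, so you may only apply it where $f$ is regular on a neighbourhood of $\s_x$. The condition $x\notin\mathcal{ZP}(f^s)$ does not guarantee this. Take $f=(q-i)*(q+j)^{-*}=(q^2+1)^{-1}(q-i)*(q-j)$. Then $f^s\equiv 1$, yet $f$ has a pole at every point of $\s$ except $i$, where it has the removable value $f(i)=(1+k)/2$. At $x=i$ the product formula would give $f(i)\,f^c(-j)=\tfrac{1+k}{2}\cdot\tfrac{1-k}{2}=\tfrac12\neq 1$. Moreover, no point of $\s$ can serve as $S_f(i)$, since $|f(i)|^2=\tfrac12$ and $f$ is infinite elsewhere on $\s$.

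So you should add the hypothesis that $\s_x$ contains no poles of $f$. This costs nothing for the intended use: in the Jensen formula $x$ ranges over $\partial\mathbb B_R$, which is assumed free of zeros and poles of $f$.
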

This decomposition allows one to utilize the Jensen formula due to \cite{Altavilla_2019} on the slice symmetric function $f^s$, and yield a formula in terms of $\log|f|$ and $\log|f\circ S_f|$. 

In the Jensen formula below, the spherical conjugate $S_f$ acts as a compensation function that adjusts the boundary integral to match the divisor structure of $f$. 

Finally, we recall the definitions of the isolated and spherical multiplicities from \cite[Definition 3.12]{Stoppato_2012},  total multiplicity from \cite[Definition 6.13]{GENTILI2007279} \footnote{An equivalent and more precise definition for total multiplicity can be found in \cite[Definition 14]{Ghiloni_2011}, but it is less refined.}, order from \cite[Definition 5.18]{GENTILI2007279}, and spherical order from \cite[Definition 5.30]{GENTILI2007279}.  

\begin{teo}[Perotti's Jensen Formula]\label{PJensen}
    Let $\Omega_D$ be an open circular domain in $\hh$, and let the closure of $\mathbb{B}_R$ be contained in $\Omega$. Let $f:\Omega_D\to\hh$ be semiregular and not constant. In $\overline{\mathbb B_R}$, let $\{r_i\}_1^m$  be the isolated real zeroes of $f$, $\{a_i\}_1^t$ and $\{\s_{a_i}\}_{t+1}^l$ be the nonreal isolated and spherical zeroes of $f$,  repeated according to total multiplicity. Let $\{p_i\}_1^n$ be the real poles of $f$, and let $\{\s_{b_i}\}_1^p$ be the spherical poles of $f$ such that all points within it have the same order,  and let $\{\s_{b_i}\}_{p+1}^s$ be the spherical poles of $f$ such that they each contain one point of lesser order, repeated according to spherical order, and let $\{\alpha_i\}_{1}^{s'}$ be the points of lesser order in $\{\s_{b_i}\}_{p+1}^s$ repeated according to isolated multiplicity. Assume that $0$ is neither a pole nor a zero of $f$, and $\partial \mathbb B_R$ does not contain zeroes or poles of $f$. Then, it holds:\footnote{We have abused the index $i$ for the sake of simplicity.} 
    \begin{align*}&\log|f(0)|+\frac{R^2}4\Re \left(\left(f(0)^{-1}\conj{\frac{\partial f}{\partial q}(0)}\right)^2\right)-\frac {R^2}4\Re\left(f(0)^{-1}\frac{\partial^2f}{\partial q^2}(0)\right)\\ &=\frac1{2|\partial \mathbb B_R|}\int_{\partial \mathbb B_R}\log|f(w)|\,d\sigma(w)+\frac1{2|\partial \mathbb B_R|}\int_{\partial \mathbb B_R}\log|(f\circ S_f)(w)|\,d\sigma(w) \\ &\quad -\sum_{i=1}^m\left(\log\frac R{r_i}+\frac{r_i^4-R^4}{4R^2r_i^2}\right) + \sum_{i=1}^n\left(\log\frac R{p_i}+\frac{p_i^4-R^4}{4R^2p_i^2}\right)\\ &\quad-\sum_{i=1}^l\left(2\log\frac R{|a_i|}+\frac{|a_i|^4-R^4}{4R^2|a_i|^4}\left(4(\Re a_i)^2-2|a_i|^2\right)\right)\\&\quad+\sum_{i=1}^s\left(2\log\frac R{|b_i|}+\frac{|b_i|^4-R^4}{4R^2|b_i|^4}\left(4(\Re b_i)^2-2|b_i|^2\right)\right)\\&\quad -\sum_{i=1}^{s'}\left(2\log\frac R{|\alpha_i|}+\frac{|\alpha_i|^4-R^4}{4R^2|\alpha_i|^4}\left(4(\Re \alpha_i)^2-2|\alpha_i|^2\right)\right)
    .\end{align*}
\end{teo}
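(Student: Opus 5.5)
The plan is to reduce to a Jensen-type formula for the slice-preserving function $f^s$ and then redistribute its divisor back onto $f$. First, Corollary~\ref{sphericalConj} gives $\log|f^s(w)|=\log|f(w)|+\log|f(S_f(w))|$ on $\partial\mathbb B_R$. Averaging over the sphere, we get
\[
\frac1{|\partial\mathbb B_R|}\int_{\partial\mathbb B_R}\log|f^s|\,d\sigma
\]
equal to the sum of the two boundary integrals in the statement (each carrying the weight $\tfrac1{2|\partial \mathbb B_R|}$), up to the normalization by a factor $2$. So it suffices to establish a Jensen formula for $f^s$ and then divide by $2$.

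Next, since $f^s$ is slice-preserving, its restriction to each slice $L_I$ is a real-symmetric meromorphic function of one complex variable. The function $\log|f^s|$ is, however, not harmonic in $\rr^4$. I would therefore use the Riesz decomposition / Green's formula in $\rr^4$ for the ball $\mathbb B_R$. The fundamental solution is $|x|^{-2}$ rather than $\log$, and this is where the correction terms $\frac{r^4-R^4}{4R^2r^2}$ arise. Concretely:
\begin{itemize}
\item Compute the four-dimensional Laplacian of $\log|q-a|$ and of $\log|(q-a)*(q-\bar a)|=\log|q^2-2\Re(a)q+|a|^2|$ for the factors of $f^s$.
\item Integrate these against the Green function of $\mathbb B_R$.
\item Evaluate the mean-value defect at $0$. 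This produces the Taylor terms $\frac{R^2}{4}\Re(\cdots)$, via $\Delta\log|g|$ at $0$ for the slice-preserving $g=f^s$. I would express these through $f(0)$, $\partial_q f(0)$ and $\partial_q^2 f(0)$, using $f^s=f*f^c$ and the Leibniz rule for slice derivatives at the real point $0$.
\end{itemize}
Equivalently, I would invoke the Altavilla Jensen formula for $f^s$, which is already in this form for slice-preserving regular functions, and then extend it to semiregular $f^s$ by writing $f^s=g/h$ with $g,h$ slice-preserving regular.

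Then I would do the divisor bookkeeping:
\begin{itemize}
\item A real zero $r_i$ of $f$ of multiplicity $k$ is a zero of $f^s$ of multiplicity $2k$, which explains the single (halved) term.
\item A nonreal isolated zero $a$ of $f$ yields a spherical zero of $f^s$ at $\s_a$, as does a spherical zero. The spherical factor $q^2-2\Re(a)q+|a|^2$ contributes $2\log\frac{R}{|a|}$ plus the stated $\Re a$-dependent correction after halving.
\item For poles, the spherical order of $f$ on $\s_{b_i}$ controls the pole order of $f^s$.
\item Spheres containing a point $\alpha_i$ of lesser order contribute the extra negative sum, because the order of $f^s$ there is reduced by the isolated multiplicity of $\alpha_i$.
\end{itemize}
The hypothesis that $0$ is neither a zero nor a pole, and that $\partial\mathbb B_R$ avoids them, makes all terms finite.

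I expect the main obstacle to be the pole bookkeeping with points of lesser order, namely matching total multiplicity and spherical order of $f$ precisely with the multiplicity of $f^s$ on each sphere. I would handle this by factoring $f=(\Delta_{\s_b})^{-k}g$ locally, with $g$ regular. Then $f^s=(\Delta_{\s_b})^{-2k}g^s$, and the lesser-order point corresponds to a zero of $g$ on $\s_b$, which reduces the order of $f^s$ by its isolated multiplicity. The second delicate point is the verification of the explicit constants in the $\rr^4$ Green-function computation.
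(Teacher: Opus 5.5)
Your overall route is the right one. It is how the paper proves its corrected version, Theorem~\ref{Jensen}: apply the Blaschke-corrected biharmonic mean value formula of Altavilla--Bisi to the slice-preserving $f^s$, then halve using Corollary~\ref{sphericalConj} and $|f^s(0)|=|f(0)|^2$. The gap is in your divisor bookkeeping, and it cannot be repaired, because the displayed formula is false.

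Take one factor $\Delta_c(q)=(q-c)^s=q^2-2\Re(c)q+|c|^2$ of $f^s$. It contributes $-\bigl(\log|B_{\s_c,R}(0)|+\frac{R^2}{8}\Delta\log|B_{\s_c,R}|(0)\bigr)$ to the Jensen formula for $\log|f^s(0)|$. Here $\log|B_{\s_c,R}(0)|=2\log\frac{R}{|c|}$ and $\frac{R^2}{8}\Delta\log|B_{\s_c,R}|(0)=\frac{|c|^4-R^4}{2R^2|c|^4}\bigl(2(\Re c)^2-|c|^2\bigr)$. So the contribution is
\[
-\Bigl(2\log\frac{R}{|c|}+\frac{|c|^4-R^4}{4R^2|c|^4}\bigl(4(\Re c)^2-2|c|^2\bigr)\Bigr)=-2J(c,R)
\]
\emph{before} halving, where $J(c,R)=\log\frac{R}{|c|}+\frac{|c|^4-R^4}{4R^2|c|^4}\bigl(2(\Re c)^2-|c|^2\bigr)$. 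After dividing by $2$, only $-J(c,R)$ remains.

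This is uniform in $c$. For real $r$ one has $\Delta_r=(q-r)^2$, and $J(r,R)$ is exactly the real kernel; that is why you correctly got a single term for real zeros. The total multiplicity of $\s_a$ is precisely the exponent of $\Delta_a$ in $f^s$, so a nonreal zero counted by total multiplicity must carry $J$, not $2J$. Your claim that $\Delta_a$ ``contributes $2\log\frac{R}{|a|}$ plus the stated correction after halving'' is where the argument breaks.

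The same slip affects the points of lesser order. Your own factorization gives $f^s=\Delta^{-(2k-n)}\cdot(\text{nonvanishing})$, so each unit of isolated multiplicity of $\alpha_i$ should produce $-J(\alpha_i,R)$, not $-2J(\alpha_i,R)$.

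Here is a concrete counterexample. Take $f(q)=q-a$ with $a\notin\rr$ and $|a|<R$. Then $f^s=\Delta_a$, and $\s_a$ has total multiplicity $1$. Using the Blaschke factor above, one computes
\[
\frac{1}{|\partial\mathbb B_R|}\int_{\partial\mathbb B_R}\log|f^s|\,d\sigma=2\log R+\frac{2(\Re a)^2-|a|^2}{2R^2}.
\]
Moved to the right-hand side, the derivative terms reduce to $-\frac{R^2}{4}\Re(a^{-2})=-\frac{R^2(2(\Re a)^2-|a|^2)}{4|a|^4}$. One then checks directly that
\[
\log|f(0)|=\frac{1}{2|\partial\mathbb B_R|}\int_{\partial\mathbb B_R}\log|f^s|\,d\sigma-\frac{R^2}{4}\Re(a^{-2})-J(a,R).
\]
The displayed statement instead has $-2J(a,R)$, so it is off by $J(a,R)$. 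For $a=0.5+0.7i$ and $R=2$ this is about $1.27$, matching the numerics in the paper's appendix.

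This double counting is why the paper replaces the statement by Theorem~\ref{Jensen}, whose nonreal zero and pole sums carry $J$ rather than $2J$. Carried out correctly, your argument proves that corrected formula, not the statement with the factor $2$.
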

The formula above is dense, but its intuitive meaning in terms of distribution is clear. Each real zero and pole contributes a term of the form \[\pm\left(\log\frac{R}{\zeta}+\frac{\zeta^4-R^4}{4R^2\zeta^2}\right),\] and each nonreal zero and pole contributes a term of the form \[ \pm\left(2\log\frac{R}{|\zeta|}+\frac{|\zeta|^4-R^4}{4R^2|\zeta|^4}\left(4(\Re \zeta)^2-2|\zeta|^2\right)\right).\]Finally, we consider those spherical poles that have a single point of lower order, and these contribute (against the pure contribution of the spherical poles) \[-\left(2\log\frac{R}{|\zeta|}+\frac{|\zeta|^4-R^4}{4R^2|\zeta|^4}\left(4(\Re \zeta)^2-2|\zeta|^2\right)\right),\] exactly analogous in structure to the contribution of a nonreal zero.  

We correct a subtle inconsistency in the presentation of the above formula when it comes to the treatment of nonreal zeroes and poles, in particular, the extraneous factor of two. This extraneous factor, as we shall see in the next section, causes us to double-count nonreal zeroes and poles.   

\begin{teo}[Jensen Formula]\label{Jensen}
   Let $\Omega_D$, $f$, $R$, $\{r_i\}_1^m$,  $\{a_i\}_1^t$, $\{\s_{a_i}\}_{t+1}^l$, $\{p_i\}_1^n$, $\{\s_{b_i}\}_1^p$, $\{\s_{b_i}\}_{p+1}^s$, $\{\alpha_i\}_{1}^{s'}$ be defined as in Theorem \ref{PJensen}. Assume that $0$ is neither a pole nor a zero of $f$, and $\partial \mathbb B_R$ does not contain zeroes or poles of $f$. Then, it holds:  
   \begin{align*}\log|f(0)|&=\frac{1}{2|\partial \mathbb B_R|}\int_{\partial \mathbb B_R}\log|f(w)|\,d\sigma(w)+\frac{1}{2|\partial \mathbb B_R|}\int_{\partial \mathbb B_R}\log|(f\circ S_f)(w)|\,d\sigma(w)\\  
   &\quad-\frac{R^2}{4}\Re \left(\left(f(0)^{-1}\conj{\frac{\partial f}{\partial q}(0)}\right)^2\right)+\frac {R^2}4\Re\left(f(0)^{-1}\frac{\partial^2f}{\partial q^2}(0)\right) \\
  &\quad -\sum_{i=1}^m\left(\log\frac{R}{r_i}+\frac{r_i^4-R^4}{4R^2r_i^2}\right) + \sum_{i=1}^n\left(\log\frac{R}{p_i}+\frac{p_i^4-R^4}{4R^2p_i^2}\right) 
    \\ &\quad-\sum_{i=1}^l\left(\log\frac{R}{|a_i|}+\frac{|a_i|^4-R^4}{4R^2|a_i|^4}\left(2(\Re a_i)^2-|a_i|^2\right)\right) 
    \\&\quad+\sum_{i=1}^s\left(\log\frac{R}{|b_i|}+\frac{|b_i|^4-R^4}{4R^2|b_i|^4}\left(2(\Re b_i)^2-|b_i|^2\right)\right)
    \\&\quad -\sum_{i=1}^{s'}\left(\log\frac{R}{|\alpha_i|}+\frac{|\alpha_i|^4-R^4}{4R^2|\alpha_i|^4}\left(4(\Re \alpha_i)^2-2|\alpha_i|^2\right)\right).
   \end{align*}
\end{teo}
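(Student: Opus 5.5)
Theorem \ref{Jensen} will be obtained by re-deriving Theorem \ref{PJensen} from its source rather than by any new analysis. Perotti's formula comes from applying the Altavilla–de Fabritiis Jensen formula for slice-preserving functions to the symmetrization $f^s$. Corollary \ref{sphericalConj} then splits the boundary term $\log|f^s|$ into $\log|f|+\log|f\circ S_f|$. My plan is to redo this bookkeeping and locate exactly where the factor $2$ in front of the nonreal terms enters.

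The first step handles the left-hand side. Moving the derivative terms to the right gives the first three lines of the statement, with signs flipped. This is a pure rearrangement of Theorem \ref{PJensen}.

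The second step tracks zeros and poles of $f^s$. Since $f^s$ is slice-preserving, each nonreal zero $a$ of $f$ produces a spherical zero $\s_a$ of $f^s$. Counted with total multiplicity, the sphere $\s_a$ carries multiplicity $2$ for each isolated zero. The Altavilla–de Fabritiis formula for a spherical zero of $f^s$ contributes
\[
2\log\frac{R}{|a|}+\frac{|a|^4-R^4}{4R^2|a|^4}\bigl(4(\Re a)^2-2|a|^2\bigr)
\]
per sphere, counted once. Perotti's formula lists the $a_i$ ``repeated according to total multiplicity'', and total multiplicity already includes the factor $2$. Keeping both therefore counts each sphere twice. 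I will verify this on a test case, for instance $f(q)=q-a$ with $a$ nonreal. There $f^s(q)=q^2-2q\Re a+|a|^2$, and the classical complex Jensen formula on a slice shows that $\log|a|^2$ must appear exactly once. Halving the bracket then gives
\[
\log\frac{R}{|a_i|}+\frac{|a_i|^4-R^4}{4R^2|a_i|^4}\bigl(2(\Re a_i)^2-|a_i|^2\bigr)
\]
for each listed zero. The same argument applies to the spherical poles $\s_{b_i}$, where spherical order again double-counts.

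The third step treats the points of lesser order $\alpha_i$. They are listed by isolated multiplicity, not total multiplicity. No doubling is built into that list, so the term with $4(\Re\alpha_i)^2-2|\alpha_i|^2$ stays unchanged. Real zeros and poles are unaffected, since their multiplicities in $f^s$ are simply doubled and balanced by the $\tfrac12$ normalization.

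The main obstacle is fixing the multiplicity conventions precisely. I need to confirm that total multiplicity, in the sense of \cite{GENTILI2007279} and as it enters $f^s$, really is twice the count Perotti's computation assumes for nonreal points, and that isolated multiplicity for $\alpha_i$ is not. I would settle this first with the explicit examples $q-a$ and $(q-a)^{-*}$, and with a function having a spherical pole carrying one point of lesser order, before writing the general statement.
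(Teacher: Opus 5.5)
Your route is the same as the paper's. The paper applies the slice-preserving Jensen formula to $f^s$, re-deriving it with one spherical Blaschke factor $B_{\s_{a_i},R}$ per listed zero, and then uses Corollary \ref{sphericalConj} and $\log|f^s(0)|=2\log|f(0)|$. Your Steps 1--2 arrive at the right kernels for the $a_i$ and $b_i$, but for the wrong reason, and that reason breaks Step 3.

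Nothing is listed twice. For $f(q)=q-a$ the total multiplicity of $\s_a$ is $1$: by Definition \ref{totalOrderr}, $f^s=(q-a)^s$ occurs exactly once. Write $J(\zeta,R)=\log\frac{R}{|\zeta|}+\frac{|\zeta|^4-R^4}{4R^2|\zeta|^4}(2(\Re\zeta)^2-|\zeta|^2)$. The expression $2\log\frac{R}{|a|}+\frac{|a|^4-R^4}{4R^2|a|^4}(4(\Re a)^2-2|a|^2)=2J(a,R)$ is what a \emph{single} factor $(q-a)^s$ of $f^s$ contributes to the formula for $\log|f^s(0)|$. The $2$ comes from its two roots $x\pm yJ$ on each slice $L_J$, where $a=x+yI$. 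It becomes $J(a,R)$ only through the $\tfrac12$ in $\log|f(0)|=\tfrac12\log|f^s(0)|$. That is exactly the mechanism you already use for real points. The nonreal terms of Theorem \ref{PJensen} are simply the un-halved kernels.

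Since the correction is this global factor $\tfrac12$, it applies to every term of the $f^s$-formula, including those from points of lesser order. Keeping the $\alpha_i$ terms ``unchanged'' gives $-\sum_{i=1}^{s'}2J(\alpha_i,R)$. Its logarithmic part $2\log\frac{R}{|\alpha_i|}$ already disagrees with the statement, so your plan does not prove the displayed formula. The unchanged term is also wrong in its own right.

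Take $f=(q-b)^{-*}=((q-b)^s)^{-1}(q-\bar b)$ with $b\in\hh\setminus\rr$ and $0<|b|<R$.
\begin{itemize}
\item Then $f^s=((q-b)^s)^{-1}$, so the zero and pole terms must total $+J(b,R)$.
\item The sphere $\s_b$ has spherical order $2$ and contributes $+2J(b,R)$.
\item So the lesser-order point $\bar b$, which has isolated multiplicity $1$, must contribute exactly $-J(b,R)$.
\item Your version instead gives a total of $0$.
\end{itemize}
Hence each $\alpha_i$ must carry $J(\alpha_i,R)$.

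The same example shows that the factor $4(\Re\alpha_i)^2-2|\alpha_i|^2$ in the displayed last line should read $2(\Re\alpha_i)^2-|\alpha_i|^2$. With the line as printed, the total would be $\log\frac{R}{|b|}$ rather than $J(b,R)$. That also conflicts with Theorem \ref{jensen2}, where these points are absorbed into the total order with kernel $J$.

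The paper's proof never treats the $\alpha_i$ separately: its Blaschke product involves only the $a_i$ and $b_i$. So actually running your proposed test with a lesser-order pole is exactly the step that is missing.
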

\begin{proof}
    We modify the proof of \cite[Theorem 3.3]{Altavilla_2019}. As such, we defer the technical details to the cited paper, and only highlight the changes we have made. We assume without loss of generality that $f$ has no real zeroes or poles, because the following correction does not affect those terms. 

    Let \[g(q)\coloneq \left(\prod_{|b_i|< R}B_{\s_{b_i},R}(q)\right)^{-1}\left(\prod_{|a_i|< R}B_{\s_{a_i},R}(q)\right)f^s(q),\] where \[B_{\s_\zeta, \rho}(x)=(\rho^2(x-\zeta)^s)^{-1}(x-\rho^2\zeta^{-1})^s|\zeta|^2.\] Then, by \cite[Theorem 7.24]{mitrea2013}, \cite[Theorem 2.10]{Altavilla_2019}, and because $g$ has no zeroes or poles on $\mathbb B_R$, $\log|g|$ is biharmonic and it holds \[\log|g(0)|=\frac{1}{|\partial \mathbb B_R|}\int_{\partial \mathbb B_R}\log|g(w)|\,d\sigma(w)-\frac{R^2}{8}\Delta\log|g(q)|_{\mid q=0}.\] We have \[\log|g(0)|=\log|f^s(0)|+2\left(\sum_{|\beta_i|<R}\log\frac{R}{|\beta_i|}-\sum_{|\alpha_i|<R}\log\frac{R}{|\alpha_i|}\right),\] and by \cite[Lemma 3.1]{Altavilla_2019} we have \begin{align*}\Delta\log|g(q)|_{|q=0}&=\Delta\log|f^s(q)|_{|q=0}+\sum_{|\alpha_i|<R}\frac{2(R^4-|\alpha_i|^4)}{R^4|\alpha_i|^4}[2|\alpha_i|^2-4(\Re \alpha_i)^2]\\
    &\quad-\sum_{|\beta_i|<R}\frac{2(R^4-|\beta_i|^4)}{R^4|\beta_i|^4}[2|\beta_i|^2-4(\Re \beta_i)^2].\end{align*} Furthermore we have \[\int_{\partial \mathbb B_R}\log|g(w)|\,d\sigma(w)=\int_{\partial \mathbb B_R}\log|f^s(w)|\,d\sigma(w),\] because $B_{\s_\zeta,R}:\partial \mathbb B_R\to\partial \mathbb B_1$. Combining this yields \begin{align*}\log|f^s(0)|&=\frac{1}{|\partial \mathbb B_R|}\int_{\partial \mathbb B_R}\log|f^s(w)|\,d\sigma (w)-\frac{R^2}{8}\Delta\log|f^s(q)|_{\mid q=0}\\ 
    &\quad-\sum_{|a_i|<R}\left(2\log\frac{R}{|a_i|}+\frac{R^2}{8}\frac{2(R^4-|a_i|^4)}{R^4|a_i|^4}\left(2|a_i|^2-4\Re(a_i)^2\right)\right)\\
    &\quad+\sum_{|b_i|<R}\left(2\log\frac{R}{|b_i|}+\frac{R^2}{8}\frac{2(R^4-|b_i|^4)}{R^4|b_i|^4}\left(2|b_i|^2-4\Re(b_i)^2\right)\right).\end{align*} We proceed as in the proof of \cite[Theorem 13]{perottiJensen2019}. Recall Corollary \ref{sphericalConj}, and note that $|f^s(0)|=|f(0)|^2$, so that $\log|f^s(0)|=2\log|f(0)|$. We recall the computation of $\Delta\log|f^s(q)|_{\mid q=0}$ from \cite[Proposition 8]{Altavilla_2019}. Then, we have  
    \begin{align*}\log|f^s(0)|&=\frac{1}{|\partial \mathbb B_R|}\int_{\partial \mathbb B_R}\log|f^s(w)|\,d\sigma (w)-\frac{R^2}{8}\Delta\log|f^s(q)|_{\mid q=0}\\ 
    &\quad-\sum_{|a_i|<R}\left(\log\frac{R}{|a_i|}+\frac{R^2}{8}\frac{2(R^4-|a_i|^4)}{R^4|a_i|^4}\left(|a_i|^2-2\Re(a_i)^2\right)\right)\\
    &\quad+\sum_{|b_i|<R}\left(\log\frac{R}{|b_i|}+\frac{R^2}{8}\frac{2(R^4-|b_i|^4)}{R^4|b_i|^4}\left(|b_i|^2-2\Re(b_i)^2\right)\right).\end{align*}
    The result follows by referring to Theorem \ref{PJensen} if $f$ has real zeroes or poles. 
\end{proof}

\subsection{Total Order}

We now come to the original contributions of this paper. First, we remark that the notions of total multiplicity and the order of the poles as in \ref{Jensen} coincide exactly in the following way: 

\begin{defin}[Total Order]\label{totalOrder}
    Let $f$ be a semiregular function on a circular domain $\Omega_D$ with $f\not\equiv0$. Consider $x+y\s\in\Omega$. There exists $m\in\mathbb{Z}$, $n\in\mathbb{N}$, $p_1,\dots,p_n\in x+y\s$ with $p_i\neq \conj {p_{i+1}}$ for all $i\in\{1,\dots,n-1\}$ so that \[f(q)=[(q-x)^2+y^2]^m(q-p_1)*(q-p_2)*\dots*(q-p_n)*g(q)\] for some semiregular function $g$ on $\Omega_D$ which does not have poles nor zeroes in $x+y\s$. Then, $\ordt_{\s_{x+yI}}(f)\coloneq m+n$ is the total order of $x+y\s$. If $y=0$, then the total order is defined to coincide with the isolated multiplicity of $x$. 

\end{defin}
Note that when $m\geq0$, the above notion of total order is exactly the total multiplicity. Furthermore, with this definition, we may simply count the poles of $f$ in Theorem \ref{Jensen} according to total order, instead of counting according to the spherical order and correcting for the contributions due to isolated points in these spherical poles. We also note that the total order is \emph{signed}, so that poles have negative total order. In this way, both the zeroes and poles of $f$ can be counted according to total order in Theorem \ref{Jensen}. Thus, total order is a natural generalization of the existing notion of total multiplicity.\footnote{We later discovered the exposition of \cite[Section 8]{Bisi2021} which provides a related but distinct counting notion using \emph{divisors}. In particular, see \cite[Proposition 8.8]{Bisi2021}.}

We pose an equivalent definition of total order, which aligns more closely with the modern definition of total multiplicity in terms of the classical order of the symmetrization. 

\begin{defin}[Total Order]\label{totalOrderr}
    Let $f$ be a semiregular function on a circular domain $\Omega_D$ with $f\not\equiv0$. For any $\zeta\in\Omega_D$, there exists $k\in\mathbb Z$ so that $[(q-\zeta)^s]^{-k}f^s(q)$ has no zeroes or poles in $\s_\zeta$. We say $f$ has total order $k$ at $\s_\zeta$, and we denote $k$ by $\ordt_{\s_\zeta}(f)$. 
\end{defin}

The following remark demonstrates that the kernel counting the real zeroes (resp. real poles) and the kernel counting the nonreal zeroes (resp. nonreal poles) are in fact the same. 

\begin{remark}
    Let $r\in\hh\cap\rr$. Then, \[\left(\log\frac{R}{|r|}+\frac{|r|^4-R^4}{4R^2|r|^4}\left(2(\Re r)^2-|r|^2\right)\right)=\left(\log\frac{R}{r}+\frac{r^4-R^4}{4R^2r^2}\right).\]
\end{remark}

This is a trivial computation. In other words, Theorem \ref{Jensen} counts each nonreal zero (resp. nonreal pole) according to the kernel \[J(\zeta,R)\coloneq\left(\log\frac{R}{|\zeta|}+\frac{|\zeta|^4-R^4}{4R^2|\zeta|^4}\left(2(\Re \zeta)^2-|\zeta|^2\right)\right),\] while each real zero (resp. real pole) is also counted according to the same kernel. 

Finally, we define \[
\mathcal S(\Omega_D) \coloneq 
\Bigl\{ \s_\zeta : \zeta \in \Omega_D, \;\bigl(\exists q \in \s_\zeta \text{ s.t. } f(q) = 0 \text{ or } \infty \implies f(\zeta) = 0 \text{ or } \infty\bigr) \Bigr\}.
\]
to denote the set of all 2-spheres contained in $\Omega_D$. The second condition is merely to ensure that if $\zeta$ is an isolated nonreal zero (resp. nonreal pole), that the sphere $\s_\zeta$ is in fact indexed by $\zeta$ (and not a nonzero [resp. nonpole]) in the set $\mathcal{S}(\Omega_D)$. With this, we can state a much cleaner version of Theorem \ref{Jensen}. 

\begin{teo}[Jensen Formula with Total Order]\label{jensen2}
     Let $\Omega_D$ be an open circular domain in $\hh$, and let the closure of $\mathbb{B}_R$ be contained in $\Omega_D$. Let $f:\Omega_D\to\hh$ be semiregular and not constant. Assume that $0$ is neither a pole nor a zero of $f$, and $\partial \mathbb B_R$ does not contain zeroes or poles of $f$. Then, it holds:
     \begin{align*}
    \log|f(0)| &= \frac{1}{2|\partial \mathbb B_R|}\int_{\partial \mathbb B_R}\log|f(w)|\,d\sigma(w)+\frac{1}{2|\partial \mathbb B_R|}\int_{\partial \mathbb B_R}\log|(f\circ S_f)(w)|\,d\sigma(w) \\  
   &\quad  -\frac{R^2}{4}\Re \left(\left(f(0)^{-1}\conj{\frac{\partial f}{\partial q}(0)}\right)^2\right)+\frac {R^2}4\Re\left(f(0)^{-1}\frac{\partial^2f}{\partial q^2}(0)\right) \\
    &\quad -\sum_{\substack{
        \s_\zeta\in\mathcal{S}(\mathbb B_R) \\
        \zeta\neq 0
    }}\ordt_{\s_\zeta}(f)J(\zeta,R).\end{align*}
\end{teo}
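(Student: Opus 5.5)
The plan is to derive Theorem \ref{jensen2} directly from Theorem \ref{Jensen} by regrouping its five sums into one sum over spheres in $\mathcal S(\mathbb B_R)$, weighted by total order. The boundary integrals and the two derivative terms at $0$ are identical in both statements, so only the divisor sums need attention. By the Remark preceding the definition of $J$, the kernel for a real zero or pole is exactly $J(r,R)$. Hence the real zeroes $\{r_i\}$ contribute $-\sum_i J(r_i,R)$ and the real poles $\{p_i\}$ contribute $+\sum_i J(p_i,R)$, with repetition by isolated multiplicity. Since total order at a real point is defined to equal the isolated multiplicity (and is negative for poles), these combine to $-\sum \ordt_{\s_\zeta}(f)J(\zeta,R)$ over real $\zeta$. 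The exclusion $\zeta\neq 0$ is harmless, because $0$ is neither a zero nor a pole.

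Next I treat the nonreal spheres. The key observation is that $J(\zeta,R)$ depends only on $|\zeta|$ and $\Re\zeta$, so it is constant on $\s_\zeta$. It therefore does not matter which representative indexes the sphere, and the convention in $\mathcal S(\Omega_D)$ only fixes the choice consistently.

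For zeroes, the lists $\{a_i\}_1^t$ and $\{\s_{a_i}\}_{t+1}^l$ are repeated according to total multiplicity. By the Remark after Definition \ref{totalOrder}, total multiplicity equals $\ordt$ when $m\ge 0$. So the zero sum is $-\sum \ordt_{\s_\zeta}(f)J(\zeta,R)$ over nonreal spheres containing zeroes.

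For poles, I use the factorization of Definition \ref{totalOrder}, $f=[(q-x)^2+y^2]^m(q-p_1)*\cdots*(q-p_n)*g$ with $m<0$. The spherical order corresponds to $-m$, and the lesser-order point accounts for $n$, so $\ordt=m+n$. In Theorem \ref{Jensen} a sphere $\s_{b_i}$ appears $-m$ times with $+J(b_i,R)$, and its lesser-order point $\alpha$ appears $n$ times with $-J(\alpha,R)$. Since $\alpha\in\s_{b_i}$, we have $J(\alpha,R)=J(b_i,R)$, and the net contribution is $(-m-n)J(b_i,R)=-\ordt_{\s_{b_i}}(f)J(b_i,R)$.

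The step I expect to be the main obstacle is matching the $s'$-sum. As printed in Theorem \ref{Jensen}, it still carries the uncorrected coefficient $4(\Re\alpha)^2-2|\alpha|^2$ rather than $2(\Re\alpha)^2-|\alpha|^2$. I would first check against the proof of Theorem \ref{Jensen} that the same halving applied to $\{a_i\}$ and $\{b_i\}$ also applies to the lesser-order points, since their contribution comes from the same Blaschke-type factor $B_{\s_\zeta,R}$ attached to $f^s$. This makes their kernel exactly $J$.

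I also need to confirm that the two definitions of total order agree. Concretely, $(q-p)^s=(q-x)^2+y^2$ for $p\in x+y\s$, so $f^s$ has order $2m+n$ along the sphere factor. I need to check that Definition \ref{totalOrderr} normalizes this so that $k=m+n$. Summing the real and nonreal contributions then gives the single sum in Theorem \ref{jensen2}.
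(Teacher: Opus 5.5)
Your route is the paper's own: Theorem \ref{jensen2} is obtained there simply by regrouping the sums of Theorem \ref{Jensen} by total order. That regrouping uses that the real kernel equals $J$, and it asserts that Definitions \ref{totalOrder} and \ref{totalOrderr} agree. The check you defer to your last paragraph is exactly where this breaks. Take $f=[(q-x)^2+y^2]^m(q-p_1)*\cdots*(q-p_n)*g$ with $y\neq 0$. Each $(q-p_i)^s=(q-x)^2+y^2$, so $f^s=[(q-x)^2+y^2]^{2m+n}g^s$, and Definition \ref{totalOrderr} gives $k=2m+n$. No normalization turns this into $m+n$.

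Theorem \ref{Jensen} also counts $2m+n$. In its proof, $g$ is free of zeroes and poles only if each sphere receives as many factors $B_{\s_\zeta,R}^{\pm1}$ as the exponent of $(q-\zeta)^s$ in $f^s$. So ``total multiplicity'' there must be the standard $2m+n$; for instance, the degree-two polynomial $(q-a)^s$ has one sphere of zeroes of total multiplicity $2$. Likewise a pole sphere must be listed $-2m$ times (spherical order in its even normalization), not $-m$. Your two identifications are therefore each off by $m$, and the formula with weight $m+n$ is false.

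Two examples show this, with $a$ purely imaginary, $0<|a|<R$, so that $J(a,R)>0$.
\begin{itemize}
\item For $f=(q-a)^s$ we have $f^s=f^2$ and $S_f(q)=\conj q$. The left side, the boundary term and the derivative terms are then exactly twice those for $q-a$. So the sphere must contribute $-2J(a,R)$, whereas $m+n=1$.
\item For $f=(q-a)^{-*}=[(q-a)^s]^{-1}(q-\conj a)$, all those terms are the negatives of those for $q-a$. So the sphere contributes $+J(a,R)\neq 0$, whereas $m+n=0$.
\end{itemize}

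The repair is to read $\ordt$ as in Definition \ref{totalOrderr} throughout, which amounts to correcting Definition \ref{totalOrder} to $2m+n$. Then a sphere of zeroes appears $2m+n$ times with $-J$. A pole sphere appears $-2m$ times with $+J$ and its lesser-order point $n$ times with $-J$, for a net $-(2m+n)J$. The rest of your regrouping goes through unchanged: the real points, the constancy of $J$ on spheres, and the exclusion $\zeta\neq0$.

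Your concern about the $s'$-sum is justified and resolves as you expect. The lesser-order points only lower the exponent of $(q-\zeta)^s$ in $f^s$, so they enter the Blaschke product like every other factor. After halving via $\log|f^s(0)|=2\log|f(0)|$ they carry the kernel $J$; the printed $4(\Re\alpha)^2-2|\alpha|^2$ is an unhalved remnant.

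A cleaner proof skips the case analysis altogether. Apply the Blaschke argument directly to $f^s$, whose divisor on $\s_\zeta$ is $[(q-\zeta)^s]^{\ordt_{\s_\zeta}(f)}$. Each such factor contributes $2J(\zeta,R)$ to the formula for $\log|f^s(0)|$, for real and nonreal $\zeta$ alike; then halve.
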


\begin{remark}\label{zeroHandleRemark}
    In the summations of Theorem \ref{jensen2}, we make the assumption that $0$ is not a zero or pole of $f$. It is easy to see why this is the case; the Jensen Kernel is not even defined for $\zeta=0$. Thus, as in the classical case, we consider the case where $0$ is possibly a zero or pole separately, by letting $\log|f(q)|=m\log |q|+\log|g(q)|$, if $f(q)=q^mg(q)$. 
\end{remark}

\section{Nevanlinna Functions}\label{sec:NevanlinnaFunctions}
We aim to package the ``unrefined" terms of Theorem \ref{jensen2} to create suitable Nevanlinna functions, in analogy with the complex case. 

\subsection{Integrated Counting Function}\label{sec:IntegratedCountingFunction}
The goal of this subsection is to define a notion of \textit{counting} from the summation terms of Theorem \ref{jensen2}. 

 For the sake of notational convenience, we define\footnote{Note the identity \[(f^{-*})^s=(f^s)^{-1},\] and so we may use Definition \ref{totalOrderr}}\[\ordt_{\s_q}(f,a)\coloneq \max\{0,\ordt_{\s_q}(f-a)\}, \quad \ordt_{\s_q}(f,\infty)\coloneq \max\{0,\ordt_{\s_q}(f^{-*})\}.\] We remark that $\ordt_{\s_q}(f)$ is signed, as is the Jensen order, while $\ordt_{\s_q}(f,a)$ for any $a\in\hp$ is always nonnegative. Conceptually, the question that the notation $\ordt_{\s_q}(f,a)$ answers is: with what multiplicity does $f$ attain $a$ on the sphere $\s_q$? We prefer to use the signed definition when poles contribute negatively in a summation, while we prefer to use the nonnegative definition in most other scenarios. This formulation leads to our first definition of the integrated counting function. 

\begin{table}[ht]
\centering
\caption{Notation summary for Section \ref{sec:IntegratedCountingFunction}.}
\label{tab:counting-notation}
\begin{tabularx}{\textwidth}{@{}lX@{}}
\toprule
Symbol & Meaning \\ 
\midrule
$\ordt_{\s_q}(f)$ & (Signed) total order of $f$ on  $\s_q$ \\[2pt]
$\ordt_{\s_\zeta}(f,a)$ & Total order of $f-a$ on $\s_\zeta$; unsigned total order \\[2pt]
$n(f,a,r)$ & Unintegrated counting function: $\displaystyle \sum_{\s_\zeta\subset\overline{\mathbb B_r}}\ordt_{\s_\zeta}(f,a)$ \\[2pt]
$N(f,a,r)$ & Integrated counting function, with both radial and angular contributions \\[2pt]
$\A(f,a,r)$ & Angular counting term, depending on $\Re \zeta$ \\[2pt]
$a_r(f,a,t)$ & Angular unintegrated count: $\displaystyle \sum_{\substack{\s_\zeta\subset\overline{\mathbb B_r}\\|\Re\zeta|\le t}}\ordt_{\s_\zeta}(f,a)$ \\[2pt]
$a_r^{\Re}(f,a,t)$ & Weighted angular count: $\displaystyle \sum_{\substack{\s_\zeta\subset\overline{\mathbb B_r}\\|\Re\zeta|\le t}}\ordt_{\s_\zeta}(f,a)\,(\Re\zeta)^2$ \\
\bottomrule
\end{tabularx}
\end{table}

\begin{defin}[Integrated Counting Function]\label{def:IntegratedCountingFunction}
      Let $f:\Omega_D\to\hp$ be semiregular, and let the closure of $\mathbb{B}_r$ be contained in $\Omega_D$. Then, \[N(f,a,r)\coloneq n(f,a,0)\log r+\sum_{\substack{ \s_\zeta\in\mathcal{S}(\overline{\mathbb B_r}) \\ \zeta\neq 0}}\ordt_{\s_\zeta}(f,a)J(\zeta,r).\]
\end{defin}
This definition arises from applying Theorem \ref{jensen2} to the function $f-a$ (resp. $f^{-*}$). The term $n(f,a,0)\log r$ arises by applying what was discussed in Remark \ref{zeroHandleRemark}. 

In analogy with the complex case, we desire to be able to define the integrated counting function in terms of an unintegrated counting function. This is fairly simple in the complex case, as the counting kernel in the classical Jensen formula allows a simple decomposition via a radially symmetric integral. In the quaternionic case, we must deal with both radial and angular parts, which leads to further dependencies. 

\begin{defin}[Unintegrated Counting Function]
     Let $f:\Omega_D\to\hp$ be semiregular, and let the closure of $\mathbb{B}_r$ be contained in $\Omega_D$. We define the unintegrated counting function $n(f,a,r)$ as the number of times $f$ attains $a$ in $\overline{\mathbb B_r}$ repeated according to total order. Formally, \[n(f,a,r)\coloneq \sum_{\s_{\zeta}\in\mathcal S(\overline{\mathbb B_r})}\ordt_{\s_\zeta}(f,a).\]
\end{defin}
Observe that $n(f,a,r)$ is a nondecreasing step function. 

\begin{prop}\label{quatIntCou2}
    Let $f$ be defined as above. Then, \begin{align*}N(f,a,r)&=n(f,a,0)\log r + \int_0^r[n(f,a,t)-n(f,a,0)]
    \,\frac{dt}{t} \\&\quad+\sum_{\substack{ \s_\zeta\in\mathcal{S}(\overline{\mathbb B_r}) \\ \zeta\neq 0}}\ordt_{\s_\zeta}(f,a) \frac{|\zeta|^4-r^4}{4r^2|\zeta|^4}\left(2(\Re \zeta)^2-|\zeta|^2\right).\end{align*}
\end{prop}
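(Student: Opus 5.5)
The plan is to split the Jensen kernel $J(\zeta,r)$ into its logarithmic part and its rational correction, and to treat the logarithmic part exactly as in the classical derivation of Definition \ref{claIntCou}. By definition,
\[
J(\zeta,r)=\log\frac{r}{|\zeta|}+\frac{|\zeta|^4-r^4}{4r^2|\zeta|^4}\left(2(\Re\zeta)^2-|\zeta|^2\right).
\]
Inserting this into Definition \ref{def:IntegratedCountingFunction} gives
\[
N(f,a,r)=n(f,a,0)\log r+\sum_{\substack{\s_\zeta\in\mathcal S(\overline{\mathbb B_r})\\ \zeta\neq 0}}\ordt_{\s_\zeta}(f,a)\log\frac{r}{|\zeta|}+\sum_{\substack{\s_\zeta\in\mathcal S(\overline{\mathbb B_r})\\ \zeta\neq0}}\ordt_{\s_\zeta}(f,a)\,\frac{|\zeta|^4-r^4}{4r^2|\zeta|^4}\left(2(\Re\zeta)^2-|\zeta|^2\right).
\]
The last sum is already the correction term in the statement. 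It therefore suffices to show
\[
\sum_{\substack{\s_\zeta\in\mathcal S(\overline{\mathbb B_r})\\ \zeta\neq 0}}\ordt_{\s_\zeta}(f,a)\log\frac{r}{|\zeta|}=\int_0^r\bigl[n(f,a,t)-n(f,a,0)\bigr]\frac{dt}{t}.
\]

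To prove this identity, I would write $\log\frac{r}{|\zeta|}=\int_{|\zeta|}^r\frac{dt}{t}=\int_0^r\mathbf 1_{\{|\zeta|\le t\}}\frac{dt}{t}$ for $0<|\zeta|\le r$, and then interchange the finite sum with the integral. Two facts make this legitimate.

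\emph{Finiteness.} Only finitely many spheres in $\overline{\mathbb B_r}$ carry positive $\ordt_{\s_\zeta}(f,a)$. This holds because the zeroes of $(f-a)^s$, respectively of $(f^{-*})^s=(f^s)^{-1}$, are isolated real points or isolated spheres in the compact set $\overline{\mathbb B_r}\subset\Omega_D$.

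\emph{Sphere geometry.} Every point of $\s_\zeta$ has modulus $|\zeta|$, so membership $\s_\zeta\subset\overline{\mathbb B_t}$ is equivalent to $|\zeta|\le t$.

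With these, for $t\in(0,r]$,
\[
\sum_{\substack{\s_\zeta\in\mathcal S(\overline{\mathbb B_r})\\ \zeta\neq0}}\ordt_{\s_\zeta}(f,a)\,\mathbf 1_{\{|\zeta|\le t\}}=n(f,a,t)-n(f,a,0).
\]
The subtraction removes precisely the contribution of the sphere $\s_0=\{0\}$. Integrating against $dt/t$ then gives the claimed identity.

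The main obstacle is bookkeeping rather than analysis. One has to check that the indexing set $\mathcal S(\overline{\mathbb B_t})$ used in $n(f,a,t)$ is compatible with that used in $N$, so that each sphere, including isolated nonreal zeroes represented by $\zeta$, is counted exactly once with weight $\ordt_{\s_\zeta}(f,a)$. One also has to check that the integrand $n(f,a,t)-n(f,a,0)$ vanishes near $t=0$, which holds since zeroes are isolated, so there is no integrability issue at $0$. A further point is that spheres with $|\zeta|=r$ contribute $\log 1=0$, and the correction term vanishes for them as well, so closed versus open balls does not matter. Once these conventions are confirmed, the proof is the classical Fubini/Stieltjes computation with the extra angular term carried along unchanged.
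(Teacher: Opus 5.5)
Your proposal is correct and takes the same route as the paper. You split $J(\zeta,r)$ into $\log\frac{r}{|\zeta|}$ plus the rational correction, write $\log\frac{r}{|\zeta|}=\int_{|\zeta|}^r\frac{dt}{t}$, and interchange the finite sum with the integral to obtain $\int_0^r[n(f,a,t)-n(f,a,0)]\,\frac{dt}{t}$; your remarks that every point of the sphere indexed by $\zeta$ has modulus $|\zeta|$ and that subtracting $n(f,a,0)$ removes exactly the contribution at $0$ spell out bookkeeping the paper leaves implicit.
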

\begin{proof}
    We decompose $K(\zeta,r)$ into its radial and nonradial parts as \begin{align*}
        N(f,a,r)&=n(f,a,0)\log r+\sum_{\substack{ \s_\zeta\in\mathcal{S}(\overline{\mathbb B_r}) \\ \zeta\neq 0}}
       \ordt_{\s_\zeta}(f,a)\log\frac{r}{|\zeta|} \\ 
       &\quad+ \sum_{\substack{ \s_\zeta\in\mathcal{S}(\overline{\mathbb B_r}) \\ \zeta\neq 0}}\ordt_{\s_\zeta}(f,a)\frac{|\zeta|^4-r^4}{4r^2|\zeta|^4}\left(2(\Re \zeta)^2-|\zeta|^2\right). 
    \end{align*}
    
    Then, simply observe that
      \begin{align*}
    \sum_{\substack{ \s_\zeta\in\mathcal{S}(\overline{\mathbb B_r}) \\ \zeta\neq 0}}
       \ordt_{\s_\zeta}(f,a)\log\frac{r}{|\zeta|} 
    &= \sum_{\substack{ \s_\zeta\in\mathcal{S}(\overline{\mathbb B_r}) \\ \zeta\neq 0}}
       \ordt_{\s_\zeta}(f,a)\int_{|\zeta|}^r\frac{dt}{t} \\
    &= \int_0^r \frac{1}{t}\sum_{\substack{ \s_\zeta\in\mathcal{S}(\overline{\mathbb B_t}) \\ \zeta\neq 0}}
       \ordt_{\s_\zeta}(f,a)\,dt \\
       &\quad \\
    &= \int_0^r [n(f,a,t)-n(f,a,0)]\,\frac{dt}{t},
  \end{align*} where it is justified to switch the order of integration and summation because the summation is finite. \footnote{What we mean by this is that we can only have finitely many contributions from zeroes and poles in $\mathbb{B}_r$. Though we sum over all spheres in $\mathcal{S}(\mathbb B_r)$, all but the zeroes and poles contribute trivially to the sum.}
\end{proof}

\begin{remark}
    Note that the integrated counting function as in \ref{quatIntCou2} is exactly analogous to the classical integrated counting function as in Definition \ref{claIntCou}, with the addition of the terms involving the summand \[\ordt_{\s_\zeta}(f,a)\frac{|\zeta|^4-r^4}{4r^2|\zeta|^4}(2(\Re \zeta)^2-|\zeta|^2),\] which arise from the harmonic remainder of the Blaschke factors. 
\end{remark}

We may further extract a radial term from the remaining summation. 

\begin{prop}\label{quaIntCou3}
    Let $f$ be defined as in Proposition \ref{quatIntCou2}. Then, \begin{align*}
        N(f,a,r) &= n(f,a,0)\log r + \int_0^r[n(f,a,t)-n(f,a,0)]\,\frac{dt}{t} \\
        &\quad +  \int_0^r\frac{t^4+r^4}{2r^2t^3}[n(f,a,t)-n(f,a,0)]\,dt \\ 
        &\quad + \sum_{\substack{ \s_\zeta\in\mathcal{S}(\overline{\mathbb B_r}) \\ \zeta\neq 0}}
        \ordt_{\s_\zeta}(f,a) \frac{|\zeta|^4-r^4}{2r^2|\zeta|^4}(\Re \zeta)^2
    \end{align*}
\end{prop}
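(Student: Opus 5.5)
The plan is to start from Proposition \ref{quatIntCou2}, which already supplies the first two terms (the logarithmic term and the classical radial integral). It then suffices to rewrite the residual sum
\[
\sum_{\substack{ \s_\zeta\in\mathcal{S}(\overline{\mathbb B_r}) \\ \zeta\neq 0}}\ordt_{\s_\zeta}(f,a) \frac{|\zeta|^4-r^4}{4r^2|\zeta|^4}\left(2(\Re \zeta)^2-|\zeta|^2\right)
\]
as the sum of the new radial integral and the $(\Re\zeta)^2$-weighted sum. The first step is purely algebraic: expand the factor $2(\Re\zeta)^2-|\zeta|^2$. The summand then becomes
\[
\frac{|\zeta|^4-r^4}{2r^2|\zeta|^4}(\Re\zeta)^2 \;+\; \frac{r^4-|\zeta|^4}{4r^2|\zeta|^2}.
\]
The first piece is exactly the last sum in the statement, so that part is finished. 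The second piece depends only on $|\zeta|$, and I would convert it into a radial integral, just as $\log(r/|\zeta|)$ was converted in the proof of Proposition \ref{quatIntCou2}.

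The key observation is that $\phi(t)\coloneq \frac{r^4-t^4}{4r^2t^2}=\frac{1}{4r^2}(r^4t^{-2}-t^2)$ vanishes at $t=r$. Its derivative is $\phi'(t)=-\frac{t^4+r^4}{2r^2t^3}$. Hence, by the fundamental theorem of calculus,
\[
\frac{r^4-|\zeta|^4}{4r^2|\zeta|^2}=\phi(|\zeta|)-\phi(r)=\int_{|\zeta|}^r\frac{t^4+r^4}{2r^2t^3}\,dt .
\]
Substituting this into the sum gives a sum, over spheres $\s_\zeta$ with $0<|\zeta|\le r$, of $\ordt_{\s_\zeta}(f,a)$ times an integral over $t\in[|\zeta|,r]$. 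Only finitely many spheres contribute (the zeroes of $f-a$, or the poles, in the compact ball), so I can interchange the sum and the integral. For each $t$, the inner sum collects exactly those $\s_\zeta$ with $0<|\zeta|\le t$, which is $n(f,a,t)-n(f,a,0)$. This yields $\int_0^r\frac{t^4+r^4}{2r^2t^3}[n(f,a,t)-n(f,a,0)]\,dt$, as claimed.

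The only point needing care is the convergence of this integral at $t=0$, since the kernel behaves like $t^{-3}$ there. I would handle it by noting that the nonzero zeroes in $\overline{\mathbb B_r}$ are finite in number, and therefore bounded away from $0$. Consequently $n(f,a,t)-n(f,a,0)$ vanishes on an interval $[0,\delta)$, so the integrand is identically zero near the origin. Boundary conventions (whether $|\zeta|=t$ is counted) affect only a finite set of $t$ and thus do not change the integral. Adding the three pieces back to the expression from Proposition \ref{quatIntCou2} gives the stated formula.
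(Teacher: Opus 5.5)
Your proposal is correct and follows essentially the same route as the paper. Like the paper, you split off the $(\Re\zeta)^2$ term, write the remaining radial part of the kernel as an integral $\int_{|\zeta|}^r$ via the fundamental theorem of calculus, and interchange the finite sum with the integral. The differences are minor: the paper handles $|\zeta|^2/r^2$ and $r^2/|\zeta|^2$ separately, each producing an $n(f,a,r)-n(f,a,0)$ term that cancels, whereas your single antiderivative $\phi$ with $\phi(r)=0$ does this in one step; and your remark that the integrand vanishes near $t=0$ makes explicit an integrability point the paper leaves implicit.
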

\begin{proof}
    We begin by writing \begin{align*} \sum_{\substack{ \s_\zeta\in\mathcal{S}(\overline{\mathbb B_r}) \\ \zeta\neq 0}}\ordt_{\s_\zeta}(f,a)\frac{|\zeta|^4-r^4}{4r^2|\zeta|^4}(2(\Re \zeta)^2-|\zeta|^2) &= -\frac{1}{4}\sum_{\substack{ \s_\zeta\in\mathcal{S}(\overline{\mathbb B_r}) \\ \zeta\neq 0}}\ordt_{\s_\zeta}(f,a)\frac{|\zeta|^4-r^4}{r^2|\zeta|^2}\\ &\quad+\sum_{\substack{ \s_\zeta\in\mathcal{S}(\overline{\mathbb B_r}) \\ \zeta\neq 0}}
        \ordt_{\s_\zeta}(f,a) \frac{|\zeta|^4-r^4}{2r^2|\zeta|^4}(\Re \zeta)^2.\end{align*} It suffices to look at \[\sum_{\substack{ \s_\zeta\in\mathcal{S}(\overline{\mathbb B_r}) \\ \zeta\neq 0}}\ordt_{\s_\zeta}(f,a)\frac{|\zeta|^4-r^4}{r^2|\zeta|^2}=\sum_{\substack{ \s_\zeta\in\mathcal{S}(\overline{\mathbb B_r}) \\ \zeta\neq 0}}\ordt_{\s_\zeta}(f,a)\frac{|\zeta|^2}{r^2}-\sum_{\substack{ \s_\zeta\in\mathcal{S}(\overline{\mathbb B_r}) \\ \zeta\neq 0}}\ordt_{\s_\zeta}(f,a)\frac{r^2}{|\zeta|^2},\] and we can analyze the terms on the right-hand side separately. For the first, we write \begin{align}\label{quaIntCou3.1}
            \sum_{\substack{ \s_\zeta\in\mathcal{S}(\overline{\mathbb B_r}) \\ \zeta\neq 0}}\ordt_{\s_\zeta}(f,a)\frac{|\zeta|^2}{r^2} 
            &= \frac{1}{r^2} \sum_{\substack{ \s_\zeta\in\mathcal{S}(\overline{\mathbb B_r}) \\ \zeta\neq 0}}\ordt_{\s_\zeta}(f,a) |\zeta|^2 \nonumber\\
            &= \frac{1}{r^2} \sum_{\substack{ \s_\zeta\in\mathcal{S}(\overline{\mathbb B_r}) \\ \zeta\neq 0}}\ordt_{\s_\zeta}(f,a)\left(r^2+\int_{|\zeta|}^r-2t \,dt \right) \nonumber\\
            &= \sum_{\substack{ \s_\zeta\in\mathcal{S}(\overline{\mathbb B_r}) \\ \zeta\neq 0}}\ordt_{\s_\zeta}(f,a) +\frac{1}{r^2}\sum_{\substack{ \s_\zeta\in\mathcal{S}(\overline{\mathbb B_r}) \\ \zeta\neq 0}}\ordt_{\s_\zeta}(f,a)\int_{|\zeta|}^r-2t \,dt \nonumber\\ 
            &= [n(f,a,r)-n(f,a,0)]-\frac{1}{r^2}\int_0^r 2t \left(\sum_{\substack{ \s_\zeta\in\mathcal{S}(\overline{\mathbb B_t}) \\ \zeta\neq 0}}\ordt_{\s_\zeta}(f,a)\right)\, dt \nonumber\\
            &= [n(f,a,r)-n(f,a,0)]-\frac{1}{r^2}\int_0^r 2t [n(f,a,t)-n(f,a,0)] \,dt 
        \end{align}

    Analogously for the second term, \begin{align}\label{quaIntCou3.2}
        \sum_{\substack{ \s_\zeta\in\mathcal{S}(\overline{\mathbb B_r}) \\ \zeta\neq 0}}\ordt_{\s_\zeta}(f,a)\frac{r^2}{|\zeta|^2} 
        &= r^2 \sum_{\substack{ \s_\zeta\in\mathcal{S}(\overline{\mathbb B_r}) \\ \zeta\neq 0}}\ordt_{\s_\zeta}(f,a) \frac{1}{|\zeta|^2} \nonumber\\
        &= r^2 \sum_{\substack{ \s_\zeta\in\mathcal{S}(\overline{\mathbb B_r}) \\ \zeta\neq 0}}\ordt_{\s_\zeta}(f,a)\left(\frac{1}{r^2}+\int_{|\zeta|}^r\frac{2}{t^3}\,dt \right) \nonumber\\
        &= \sum_{\substack{ \s_\zeta\in\mathcal{S}(\overline{\mathbb B_r}) \\ \zeta\neq 0}}\ordt_{\s_\zeta}(f,a)+ r^2\sum_{\substack{ \s_\zeta\in\mathcal{S}(\overline{\mathbb B_r}) \\ \zeta\neq 0}}\ordt_{\s_\zeta}(f,a)\int_{|\zeta|}^r\frac{2}{t^3}\,dt \nonumber\\
        &= [n(f,a,r)-n(f,a,0)]+r^2\int_0^r \frac{2}{t^3}\left(\sum_{\substack{ \s_\zeta\in\mathcal{S}(\overline{\mathbb B_t}) \\ \zeta\neq 0}}\ordt_{\s_\zeta}(f,a)\right) \,dt \nonumber\\
        &= [n(f,a,r)-n(f,a,0)]+r^2\int_0^r \frac{2}{t^3}[n(f,a,t)-n(f,a,0)] \,dt.
    \end{align}
    Subtracting Equation \ref{quaIntCou3.2} from \ref{quaIntCou3.1}, recalling Proposition \ref{quatIntCou2}, and noting \[\frac{t}{2r^2}+\frac{r^2}{2t^3}=\frac{t^4+r^4}{2r^2t^3}\] yields the desired result after simplification. We note again that in the above, switching the order of summation and integration is justified due to the summation being over finitely many terms. 
\end{proof}

We are now left with a term that cannot be further simplified purely in terms of the radial unintegrated counting function, due to the factor of $\Re\zeta $. 
\begin{defin}[Angular Counting Term]\label{AngularCountingTerm}
     Let $f:\Omega_D\to\hp$ be semiregular, and let the closure of $\mathbb{B}_r$ be contained in $\Omega_D$. Then, we define the angular counting term as \[ \A(f,a,r)\coloneq \sum_{\substack{ \s_\zeta\in\mathcal{S}(\overline{\mathbb B_r}) \\ \zeta\neq 0}}
        \ordt_{\s_\zeta}(f,a) \frac{|\zeta|^4-r^4}{2r^2|\zeta|^4}(\Re \zeta)^2.\]
\end{defin}

We can utilize the trivial bound $|\Re \zeta|\leq |\zeta|$ to obtain a radial estimate of the angular counting term. 

\begin{prop}
    Let $f$ be defined as in \ref{AngularCountingTerm}. Then, \begin{align*}
    \A(f,a,r)\leq -\int_0^r\frac{t^4+r^4}{r^2t^3}[n(f,a,t)-n(f,a,0)]\,dt.
    \end{align*}
\end{prop}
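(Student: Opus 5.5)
The plan is to compare $\A(f,a,r)$ term by term with a purely radial sum, and then convert that sum into an integral against $n(f,a,t)$ using the identities already established in the proof of Proposition \ref{quaIntCou3}. Throughout, the sum is over the finitely many spheres $\s_\zeta\in\mathcal S(\overline{\mathbb B_r})$ with $\zeta\neq0$ at which $f$ attains $a$. Every other sphere contributes zero, so interchanging sums and integrals is justified exactly as in Propositions \ref{quatIntCou2} and \ref{quaIntCou3}.

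\emph{Step 1 (termwise comparison).} For each such $\zeta$ I would use the trivial bound $(\Re\zeta)^2\le|\zeta|^2$ to replace the summand
\[
\ordt_{\s_\zeta}(f,a)\,\frac{|\zeta|^4-r^4}{2r^2|\zeta|^4}(\Re\zeta)^2
\quad\text{by}\quad
\ordt_{\s_\zeta}(f,a)\,\frac{|\zeta|^4-r^4}{2r^2|\zeta|^2}.
\]
Here $\ordt_{\s_\zeta}(f,a)\ge0$ and $|\zeta|\le r$, so the prefactor $\frac{|\zeta|^4-r^4}{2r^2|\zeta|^4}$ has a fixed sign. That sign decides the direction in which the replacement moves the sum.

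\emph{Step 2 (radialization).} The resulting sum is
\[
\tfrac12\sum_{\s_\zeta\in\mathcal S(\overline{\mathbb B_r}),\ \zeta\neq0}\ordt_{\s_\zeta}(f,a)\,\frac{|\zeta|^4-r^4}{r^2|\zeta|^2}.
\]
This is exactly one half of the quantity evaluated in the proof of Proposition \ref{quaIntCou3}. Subtracting \eqref{quaIntCou3.2} from \eqref{quaIntCou3.1}, the $[n(f,a,r)-n(f,a,0)]$ terms cancel and what remains is
\[
-\int_0^r\Bigl(\frac{2t}{r^2}+\frac{2r^2}{t^3}\Bigr)[n(f,a,t)-n(f,a,0)]\,dt=-2\int_0^r\frac{t^4+r^4}{r^2t^3}[n(f,a,t)-n(f,a,0)]\,dt.
\]
Halving this gives exactly the right-hand side of the proposition.

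\emph{Step 3 (assembling).} Summing the termwise comparison from Step 1 and substituting Step 2 yields the claimed relation between $\A(f,a,r)$ and $-\int_0^r\frac{t^4+r^4}{r^2t^3}[n(f,a,t)-n(f,a,0)]\,dt$.

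The main obstacle is Step 1, namely the orientation of the inequality. Since $|\zeta|\le r$, the factor $|\zeta|^4-r^4$ is nonpositive, so the summand is a nonpositive coefficient multiplied by $(\Re\zeta)^2$. Enlarging $(\Re\zeta)^2$ to $|\zeta|^2$ therefore \emph{decreases} each term rather than increasing it. Step 1 should thus be carried out with explicit sign bookkeeping: I would write each summand as $-c_\zeta(\Re\zeta)^2$ with $c_\zeta\ge0$ before invoking the bound. I would also test the resulting direction on the extreme cases $\Re\zeta=0$ and $\zeta$ real, where respectively $\A$ vanishes and equality holds in the bound. If the sign bookkeeping shows that the termwise comparison gives the bound in the opposite direction, the same three steps still produce the radial estimate with $\A(f,a,r)\ge-\int_0^r\frac{t^4+r^4}{r^2t^3}[n(f,a,t)-n(f,a,0)]\,dt$. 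In that case the statement as worded would need its inequality reversed, because the case $\Re\zeta=0$ with a zero inside $\mathbb B_r$ gives $\A=0$ against a strictly negative right-hand side.
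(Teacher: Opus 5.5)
Your Steps 1 and 2 are exactly the paper's argument. The paper's whole proof is the line ``follows directly from the proof of Proposition~\ref{quaIntCou3}'', meaning the termwise bound $(\Re\zeta)^2\le|\zeta|^2$ followed by the radialization you get by subtracting \eqref{quaIntCou3.2} from \eqref{quaIntCou3.1}. Your final paragraph correctly finds where this breaks, and the sign error is the paper's, not yours.

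The sign is not a contingency to test, so drop the ``if''. Every sphere in $\mathcal S(\overline{\mathbb B_r})$ has $|\zeta|\le r$, and $\ordt_{\s_\zeta}(f,a)\ge0$. So each summand of $\A(f,a,r)$ has the form $-c_\zeta(\Re\zeta)^2$ with $c_\zeta=\ordt_{\s_\zeta}(f,a)\frac{r^4-|\zeta|^4}{2r^2|\zeta|^4}\ge0$. Your Step 2 then gives, unconditionally,
\[
-\int_0^r\frac{t^4+r^4}{r^2t^3}[n(f,a,t)-n(f,a,0)]\,dt\;\le\;\A(f,a,r)\;\le\;0 .
\]
The upper bound is just $(\Re\zeta)^2\ge0$. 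Equality holds on the left when every $a$-point in $\mathbb B_r\setminus\{0\}$ is real. It holds on the right when every such point lies on a sphere with $\Re\zeta=0$.

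Your counterexample is valid, and you should present it as a disproof of the printed inequality. Take $f(q)=q^2+1$, $a=0$, $r=2$. The only $a$-sphere is $\s$, with $|\zeta|=1$ and $\Re\zeta=0$. Hence $\A(f,0,2)=0$, while the right-hand side of the statement is $-\tfrac{15}{8}\,\ordt_{\s}(f,0)<0$.

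Step 3 should therefore assert the reversed inequality rather than ``the claimed relation''. The same applies downstream. The last ``$\leq$'' in the paper's Analytic Characterization of the Integrated Counting Function is derived from this proposition, so it must also be reversed: it is really a lower bound for $N(f,a,r)$. The correct upper bound there comes from $\A\le0$. It carries $+\int_0^r\frac{t^4+r^4}{2r^2t^3}[n(f,a,t)-n(f,a,0)]\,dt$ in place of the subtracted integral.
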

\begin{proof}
    This follows directly from the proof of Proposition \ref{quaIntCou3}.
\end{proof}

Another approach is to define a nonradial unintegrated counting function that also includes angular dependencies. In particular, the following definition is useful. 

\begin{defin}[Angular Unintegrated Counting Functions]
    Let $f$ be defined as in \ref{AngularCountingTerm}. We define \[a_r(f,a,t)\coloneq \sum_{\substack{ \s_\zeta\in\mathcal{S}(\overline{\mathbb B_r}) \\ \zeta\neq 0 \\|\Re \zeta|\leq t}}\ordt_{\s_\zeta}(f,a).\] In other words, $a_r(f,a,t)$ counts the number of times $f$ attains $a$ in $\overline{\mathbb{B}_r}$ according to total order, with the additional condition that the real part of $\zeta$ is at most $t$. We also define \[a_r^\Re (f,a,t)\coloneq \sum_{\substack{ \s_\zeta\in\mathcal{S}(\overline{\mathbb B_r}) \\ \zeta\neq 0 \\|\Re \zeta|\leq t}}\ordt_{\s_\zeta}(f,a)(\Re \zeta)^2.\]
\end{defin}
\begin{remark}
    The difference in the above two representations is as follows. In $a_r(f,a,t)$, we absorb the angular dependency into the summation itself, at the cost of losing radial symmetry in the summation. In $a_r^\Re(f,a,t)$, we preserve the angular dependency in the summand, but lose a summation purely over $\ordt(f,a)$. The exponent of two in $(\Re\zeta)^2$ is merely to align with the structure of the angular counting term. 
\end{remark}

The following proposition provides an exact analytic quantification of the Angular Counting Term, including dependence on the angular unintegrated counting functions. 

\begin{prop}
    Let $f, \A$ be defined as in \ref{AngularCountingTerm}. Then, \begin{align*}
        \A(f,a,r)&=4r^2\iint_{0\leq h\leq t\leq r} ht^{-5}[a_t(f,a,h)-a_t(f,a,0)]\, dh\, dt \\&\quad - \int_0^r 2r^2t^{-3}[n(f,a,t)-n(f,a,0)]\,dt. \\
        &= -2r^2\int_0^rt^{-5}[a_r^\Re(f,a,t)-a_r^\Re(f,a,0)]\,dt.  \\
    \end{align*}
\end{prop}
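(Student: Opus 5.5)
The plan is to verify both equalities by the same device used in Propositions \ref{quatIntCou2} and \ref{quaIntCou3}. Each difference of step functions, such as $a_t(f,a,h)-a_t(f,a,0)$, $a_r^\Re(f,a,t)-a_r^\Re(f,a,0)$ and $n(f,a,t)-n(f,a,0)$, is a finite sum over the spheres $\s_\zeta\in\mathcal S(\overline{\mathbb B_r})$, $\zeta\neq 0$, with indicator constraints. So I would interchange the (finite) sum with the integrals, evaluate each integral in closed form for a single sphere, and compare termwise with the summand
\[\ordt_{\s_\zeta}(f,a)\Bigl(\tfrac{(\Re\zeta)^2}{2r^2}-\tfrac{r^2(\Re\zeta)^2}{2|\zeta|^4}\Bigr)\]
of $\A(f,a,r)$ from Definition \ref{AngularCountingTerm}.

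\textbf{First expression.} A sphere $\s_\zeta$ contributes to $a_t(f,a,h)-a_t(f,a,0)$ exactly when $|\zeta|\le t$ and $0<|\Re\zeta|\le h$. I would first integrate in $h$ over $[|\Re\zeta|,t]$, which gives $\tfrac12(t^2-(\Re\zeta)^2)$. I would then integrate $4r^2t^{-5}\cdot\tfrac12(t^2-(\Re\zeta)^2)$ in $t$ over $[|\zeta|,r]$, which gives
\[\frac{r^2}{|\zeta|^2}-1-\frac{r^2(\Re\zeta)^2}{2|\zeta|^4}+\frac{(\Re\zeta)^2}{2r^2}.\]
The subtracted integral $\int_0^r2r^2t^{-3}[n(f,a,t)-n(f,a,0)]\,dt$ contributes $\tfrac{r^2}{|\zeta|^2}-1$ per sphere, exactly as in Equation \eqref{quaIntCou3.2}. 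Subtracting leaves precisely the summand of $\A$.

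\textbf{Second expression.} A sphere $\s_\zeta$ contributes $\ordt_{\s_\zeta}(f,a)(\Re\zeta)^2$ to $a_r^\Re(f,a,t)-a_r^\Re(f,a,0)$ for $t\ge|\Re\zeta|>0$, with the ball radius $r$ held fixed as in the statement. Since $\int_{|\Re\zeta|}^r t^{-5}\,dt=\tfrac14(|\Re\zeta|^{-4}-r^{-4})$, multiplying by $-2r^2$ gives the per-sphere contribution
\[\frac{(\Re\zeta)^2}{2r^2}-\frac{r^2}{2(\Re\zeta)^2}.\]

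\textbf{Main obstacle: bookkeeping, which I expect to be delicate.} Two mismatches must be reconciled.

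First, spheres with $\Re\zeta=0$ (purely imaginary spheres) are excluded from $a_t(f,a,h)-a_t(f,a,0)$, but they are counted in $n(f,a,t)-n(f,a,0)$. For such a sphere the first expression picks up the spurious term $-(\tfrac{r^2}{|\zeta|^2}-1)$, while $\A$ receives $0$ from it. I would therefore check whether the paper's convention on $a_t(f,a,0)$ must be read as excluding these spheres (so that they cancel). Alternatively, I would add the correction $\sum_{\Re\zeta=0}\ordt_{\s_\zeta}(f,a)(\tfrac{r^2}{|\zeta|^2}-1)$.

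Second, the $a_r^\Re$ computation yields $\tfrac{r^2}{2(\Re\zeta)^2}$ where $\A$ requires $\tfrac{r^2(\Re\zeta)^2}{2|\zeta|^4}$. These agree only when $|\Re\zeta|=|\zeta|$, i.e.\ for real zeroes. So to obtain the stated second equality I would try first to reconcile the weight: if the integrand is to reproduce $(\Re\zeta)^2|\zeta|^{-4}$, the cutoff variable in the definition of $a_r^\Re$ should be $|\zeta|$ rather than $|\Re\zeta|$. I would test the identity on a single nonreal sphere before finalizing. If the check fails, the second line holds as stated only for zeroes on the real axis, and otherwise requires that modified weight.
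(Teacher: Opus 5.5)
Your single-sphere computations are correct, and both mismatches you flag are genuine errors in the statement, not conventions to be reconciled. Neither equality can be proved as written. What your proposal lacks is a verdict: run the two checks and commit.

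\textbf{First equality.} Since $\zeta=0$ is already excluded in the definition of $a_t$, the quantity $a_t(f,a,0)$ is exactly the total order carried by the spheres $\s_\zeta\subset\overline{\mathbb B_t}$ with $\Re\zeta=0$. No reading of the definition removes them. For instance, $f(q)=q^2+1$, $a=0$, $r=2$ gives $\A(f,0,2)=0$, while the first right-hand side equals $-3\,\ordt_{\s}(f,0)\neq 0$.

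The paper loses these spheres when it computes the inner sum $\sum_{\s_\zeta\in\mathcal S(\overline{\mathbb B_t}),\,\zeta\neq0}\ordt_{\s_\zeta}(f,a)(\Re\zeta)^2$. The sum $\sum_{|\Re\zeta|\le h}\ordt_{\s_\zeta}(f,a)$ appearing there is $a_t(f,a,h)$ itself. Replacing it by $a_t(f,a,h)-a_t(f,a,0)$, by analogy with $n(f,a,t)-n(f,a,0)$, introduces an error of $t^2a_t(f,a,0)$. The correct first identity is
\[
\A(f,a,r)=4r^2\iint_{0\le h\le t\le r}ht^{-5}\,a_t(f,a,h)\,dh\,dt-\int_0^r2r^2t^{-3}[n(f,a,t)-n(f,a,0)]\,dt .
\]
Your per-sphere calculation proves this version, including the case $\Re\zeta=0$.

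\textbf{Second equality.} Your proposed test on a nonreal sphere does fail. Taking $f(q)=q-(1+i)$, $a=0$, $r=2$ gives $\A(f,0,2)=-\tfrac38$, but
\[
-2r^2\int_0^rt^{-5}[a_r^\Re(f,0,t)-a_r^\Re(f,0,0)]\,dt=-\tfrac{15}{8}.
\]
The paper's proof correctly reaches
\[
\A(f,a,r)=-2r^2\int_0^rt^{-5}\sum_{\s_\zeta\in\mathcal S(\overline{\mathbb B_t}),\,\zeta\neq0}\ordt_{\s_\zeta}(f,a)(\Re\zeta)^2\,dt .
\]
It then identifies the inner sum with $a_r^\Re(f,a,t)-a_r^\Re(f,a,0)$. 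That conflates the cutoff $|\zeta|\le t$ with the cutoff $|\Re\zeta|\le t$ inside $\overline{\mathbb B_r}$. On $\overline{\mathbb B_t}$ one has $|\Re\zeta|\le|\zeta|\le t$, so the inner sum is $a_t^\Re(f,a,t)$. The fix is therefore exactly the change of cutoff you suggest: the ball must shrink with the integration variable,
\[
\A(f,a,r)=-2r^2\int_0^rt^{-5}\,a_t^\Re(f,a,t)\,dt,
\]
and $a_t^\Re(f,a,0)=0$ automatically.

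Your method, interchanging the finite sum with the integrals and checking sphere by sphere, is the same mechanism the paper uses. Carried out carefully, it exposes these two slips rather than proving the proposition as stated.
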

\begin{proof}
    The proof is much the same as in Propositions \ref{quatIntCou2} and \ref{quaIntCou3}. We again begin by decomposing \begin{align}\label{quaIntCou4.1}\sum_{\substack{ \s_\zeta\in\mathcal{S}(\overline{\mathbb B_r}) \\ \zeta\neq 0}}\ordt_{\s_\zeta}(f,a) \frac{|\zeta|^4-r^4}{2r^2|\zeta|^4}(\Re \zeta)^2 
    &=\sum_{\substack{ \s_\zeta\in\mathcal{S}(\overline{\mathbb B_r}) \\ \zeta\neq 0}}\ordt_{\s_\zeta}(f,a)\frac{(\Re \zeta)^2}{2r^2} \nonumber\\
    &\quad- \sum_{\substack{ \s_\zeta\in\mathcal{S}(\overline{\mathbb B_r}) \\ \zeta\neq 0}}\ordt_{\s_\zeta}(f,a)\frac{(\Re \zeta)^2r^2}{2|\zeta|^4} \nonumber\\
    &= \sum_{\substack{ \s_\zeta\in\mathcal{S}(\overline{\mathbb B_r}) \\ \zeta\neq 0}}\ordt_{\s_\zeta}(f,a)\frac{(\Re \zeta)^2}{2r^2} \nonumber\\
    &\quad- \frac{r^2}{2}\sum_{\substack{ \s_\zeta\in\mathcal{S}(\overline{\mathbb B_r}) \\ \zeta\neq 0}}\ordt_{\s_\zeta}(f,a)\left(\frac{(\Re \zeta)^2}{r^4}+\int_{|\zeta|}^r\frac{4(\Re \zeta)^2}{t^5}\,dt\right) \nonumber\\
    &=-\frac{r^2}{2}\sum_{\substack{ \s_\zeta\in\mathcal{S}(\overline{\mathbb B_r}) \\ \zeta\neq 0}}\ordt_{\s_\zeta}(f,a)\int_{|\zeta|}^r\frac{4(\Re \zeta)^2}{t^5}\,dt, 
    \end{align}
so it suffices to look at the remaining term. We have \begin{align}\label{quaIntCout4.3}
    2r^2\sum_{\substack{ \s_\zeta\in\mathcal{S}(\overline{\mathbb B_r}) \\ \zeta\neq 0}}\ordt_{\s_\zeta}(f,a)\int_{|\zeta|}^r\frac{(\Re \zeta)^2}{t^5} &= 2r^2\int_0^r t^{-5}\left(\sum_{\substack{ \s_\zeta\in\mathcal{S}(\overline{\mathbb B_t}) \\ \zeta\neq 0}}\ordt_{\s_\zeta}(f,a)(\Re \zeta)^2\right) \, dt.
\end{align}
Observe that we can treat the inner term in a similar manner, in that we attempt to further write the term $(\Re \zeta)^2$ as an integral. We analyze this term independently, and thus we have \begin{align}\label{quaIntCou4.2}
\sum_{\substack{ \s_\zeta\in\mathcal{S}(\overline{\mathbb B_t}) \\ \zeta\neq 0}}\ordt_{\s_\zeta}(f,a)(\Re \zeta)^2 
&= \sum_{\substack{ \s_\zeta\in\mathcal{S}(\overline{\mathbb B_t}) \nonumber \\ \zeta\neq 0}}\ordt_{\s_\zeta}(f,a)\left(t^2 -\int_{|\Re \zeta|}^t2h \,dh\right) \\
&= t^2 \sum_{\substack{ \s_\zeta\in\mathcal{S}(\overline{\mathbb B_t}) \nonumber\\ \zeta\neq 0}}\ordt_{\s_\zeta}(f,a)-2\int_0^t h\left( \sum_{\substack{ \s_\zeta\in\mathcal{S}(\overline{\mathbb B_t}) \\ \zeta\neq 0 \\|\Re \zeta|\leq h}}\ordt_{\s_\zeta}(f,a)\right)\,dh \\ 
&=t^2[n(f,a,t)-n(f,a,0)]-2\int_0^t h [a_t(f,a,h)-a_t(f,a,0)]\,dh.
\end{align}
Thus, we have \begin{align*}
      2r^2\sum_{\substack{ \s_\zeta\in\mathcal{S}(\overline{\mathbb B_r}) \\ \zeta\neq 0}}\ordt_{\s_\zeta}(f,a)\int_{|\zeta|}^r\frac{(\Re \zeta)^2}{t^5} 
      &= 2r^2\int_0^rt^{-3}[n(f,a,t)-n(f,a,0)]\, dt \\
      &\quad- 4r^2\int_0^rt^{-5}\left(\int_0^th[a_t(f,a,h)-a_t(f,a,0)]\,dh \right)\,dt. 
\end{align*}
The final integral term on the right-hand side cannot be expressed as a single variable, because $a_t(f,a,h)$ depends on both $t$ and $h$, though we may still apply Fubini's theorem. The first equality follows by recalling equations \ref{quaIntCou4.1} and \ref{quaIntCou4.2}. 

The second equality comes from returning to Equation \ref{quaIntCout4.3}, where we recall the definition of $a_r^\Re (f,a,t)$ and note \[\sum_{\substack{ \s_\zeta\in\mathcal{S}(\overline{\mathbb B_t}) \\ \zeta\neq 0}}\ordt_{\s_\zeta}(f,a)(\Re \zeta)^2=a_r^\Re (f,a,t)-a_r^\Re (f,a,0).\] 

\end{proof}

The above computations with the integrated counting function are summarized in the below final proposition. 

\begin{prop}[Analytic Characterization of the Integrated Counting Function]
    Let $f:\Omega_D\to\hp$ be semiregular, and let the closure of $\mathbb B_r$ be contained in $\Omega_D$. Then, \begin{align*}
        N(f,a,r)&=n(f,a,0)\log r+\int_0^r [n(f,a,t)-n(f,a,0)]\,\frac{dt}{t} \\
        &\quad + \int_0^r \frac{t^4+r^4}{2r^2t^3}[n(f,a,t)-n(f,a,0)]\,dt \\ 
        &\quad +\iint_{0\leq h\leq t\leq r} 4r^2ht^{-5}[a_t(f,a,h)-a_t(f,a,0)]\, dh\, dt \\
        &\quad -\int_0^r 2r^2t^{-3}[n(f,a,t)-n(f,a,0)]\,dt \\\\
        &=n(f,a,0)\log r+\int_0^r [n(f,a,t)-n(f,a,0)]\,\frac{dt}{t} \\
        &\quad +\int_0^r \frac{t^4+r^4}{2r^2t^3}[n(f,a,t)-n(f,a,0)]\,dt \\
        &\quad -\int_0^r2r^2t^{-5}[a_r^\Re(f,a,t)-a_r^\Re(f,a,0)]\,dt.
        \\\\
        &\leq n(f,a,0)\log r+\int_0^r [n(f,a,t)-n(f,a,0)]\,\frac{dt}{t} \\
        &\quad - \int_0^r \frac{t^4+r^4}{2r^2t^3}[n(f,a,t)-n(f,a,0)]\,dt. 
    \end{align*}
\end{prop}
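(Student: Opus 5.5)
The plan is to assemble the proposition from the three preceding results on the counting function, with no new analysis. I start from Proposition \ref{quaIntCou3}, which writes $N(f,a,r)$ as the radial part
\[
n(f,a,0)\log r+\int_0^r[n(f,a,t)-n(f,a,0)]\,\frac{dt}{t}+\int_0^r\frac{t^4+r^4}{2r^2t^3}[n(f,a,t)-n(f,a,0)]\,dt
\]
plus exactly the angular counting term $\A(f,a,r)$ of Definition \ref{AngularCountingTerm}. Every line of the statement keeps this radial part and only changes how $\A(f,a,r)$ is written.

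For the first equality, I substitute the first representation of $\A(f,a,r)$ from the preceding proposition. That is the double integral $4r^2\iint_{0\le h\le t\le r}ht^{-5}[a_t(f,a,h)-a_t(f,a,0)]\,dh\,dt$ minus $\int_0^r2r^2t^{-3}[n(f,a,t)-n(f,a,0)]\,dt$. I leave this last term separate rather than merging it with the radial integral, as in the statement. For the second equality, I instead insert the representation $-2r^2\int_0^rt^{-5}[a_r^\Re(f,a,t)-a_r^\Re(f,a,0)]\,dt$ from the same proposition. Both steps are pure substitution. The only thing to check is the convention $a^\Re_r$ versus $a^\Re_t$ in the integrand, which I would adopt exactly as in that proposition's proof.

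For the final inequality, I apply the radial estimate
\[
\A(f,a,r)\le-\int_0^r\frac{t^4+r^4}{r^2t^3}[n(f,a,t)-n(f,a,0)]\,dt
\]
stated just after Definition \ref{AngularCountingTerm}. Adding it to the radial part, the two kernels combine because
\[
\frac{t^4+r^4}{2r^2t^3}-\frac{t^4+r^4}{r^2t^3}=-\frac{t^4+r^4}{2r^2t^3}.
\]
This gives the stated bound.

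The main obstacle is bookkeeping of signs in this last step. The factor $(|\zeta|^4-r^4)/(2r^2|\zeta|^4)$ is nonpositive for $|\zeta|\le r$, and $\ordt_{\s_\zeta}(f,a)\ge0$. So replacing $(\Re\zeta)^2$ by the larger $|\zeta|^2$ moves $\A$ downward, which is the direction of a lower bound. I would therefore re-derive the cited radial estimate directly from equations \eqref{quaIntCou3.1} and \eqref{quaIntCou3.2} before using it. If the direction turns out reversed, the correct conclusion is instead the trivial upper bound $\A(f,a,r)\le0$. That bound would give $N(f,a,r)$ bounded above by the radial part, with the last kernel entering with a $+$ sign. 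The cited radial estimate would then supply a lower bound on $N(f,a,r)$ rather than the stated upper bound.
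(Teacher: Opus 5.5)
Your route is the paper's own: the paper gives no separate proof and presents this proposition as a summary of Proposition~\ref{quaIntCou3}, the two representations of $\A(f,a,r)$, and the radial estimate. Your sign check on the last step is correct, and it is a definite conclusion rather than a contingency. Since $\ordt_{\s_\zeta}(f,a)\ge0$ and $\frac{|\zeta|^4-r^4}{2r^2|\zeta|^4}\le0$ for $0<|\zeta|\le r$, replacing $(\Re\zeta)^2$ by $|\zeta|^2$ and using the two displays in the proof of Proposition~\ref{quaIntCou3} gives
\[
\A(f,a,r)\ge-\int_0^r\frac{t^4+r^4}{r^2t^3}\,\nu(t)\,dt,\qquad \nu(t)=n(f,a,t)-n(f,a,0).
\]
So the cited radial estimate is reversed, and the final inequality of the statement is false. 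It holds only if every point of $\mathbb B_r$ where $f$ attains $a$ is real, and then it holds with equality. For example, take $f(q)=q-I$ with $I\in\s$, $a=0$, $r=2$. Then $N(f,0,2)=J(I,2)=\log2+\frac{15}{16}$, while the asserted upper bound is $\log2-\frac{15}{16}$. What the ingredients actually give is the two-sided estimate
\begin{align*}
N(f,a,r)&\ge n(f,a,0)\log r+\int_0^r\frac{\nu(t)}{t}\,dt-\int_0^r\frac{t^4+r^4}{2r^2t^3}\,\nu(t)\,dt,\\
N(f,a,r)&\le n(f,a,0)\log r+\int_0^r\frac{\nu(t)}{t}\,dt+\int_0^r\frac{t^4+r^4}{2r^2t^3}\,\nu(t)\,dt,
\end{align*}
with the upper bound coming from $\A(f,a,r)\le0$, exactly as you anticipated.

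The two equalities are not pure substitution either, because the representations of $\A(f,a,r)$ you would import are themselves wrong. First, write $(\Re\zeta)^2=t^2-\int_{|\Re\zeta|}^t2h\,dh$ and exchange sum and integral. This yields $\int_0^t2h\,a_t(f,a,h)\,dh$ with no subtraction. The definition of $a_t$ already excludes $\zeta=0$, and $a_t(f,a,0)$ counts the nonzero spheres with $\Re\zeta=0$, so the term $-a_t(f,a,0)$ in the first line is spurious. For the same $f(q)=q-I$, $r=2$, the first line evaluates to $\log2+\frac{15}{16}-3$. Second, after Fubini the inner sum runs over $\s_\zeta\subset\overline{\mathbb B_t}$, which is $a^\Re_t(f,a,t)$, not $a^\Re_r(f,a,t)$; the latter also picks up spheres with $|\Re\zeta|\le t<|\zeta|$. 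For $f(q)=q-\zeta_0$ with $\zeta_0=\frac12+\frac{\sqrt3}{2}I$ and $r=2$, the true value is $\A(f,0,2)=-\frac{15}{32}$, while the $a^\Re_r$ formula gives $-\frac{255}{32}$. So make three corrections. Resolve your $a^\Re_r$ versus $a^\Re_t$ check in favor of $a^\Re_t$, which makes the cutoff $|\Re\zeta|\le t$ vacuous. Delete $-a_t(f,a,0)$. Replace the final inequality by the two-sided estimate above. With those corrections your assembly goes through as written.
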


\subsection{Mean Proximity Functions}
We now consider the integrals in Theorem \ref{jensen2}. We may desire that the integrals over $\log|f|$ and $\log|f\circ S_f|$ are in some sense, nearly identical. Unfortunately, this is not the case. Our first proposition shows that the integrals are not generally equal up to an additive constant, which is the strongest relation one could reasonably expect. 

\begin{prop}\label{mIntC_fProp}
   Let $f:\Omega_D\to \hp $ be semiregular and nonconstant, and let $\overline{\mathbb B_r}\subseteq \Omega_D$. Then, in general, \[\frac{1}{|\partial \mathbb B_r|}\int_{\partial \mathbb B_r}\log|f(w)|\,d\sigma(w)\neq \frac{1}{|\partial \mathbb B_r|}\int_{\partial \mathbb B_r}\log |(f\circ S_f)(w)|\,d\sigma(w)+C_f.\] where $C_f$ is a constant with respect to $r$ depending on $f$. 
   \end{prop}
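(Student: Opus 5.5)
Since the statement asserts failure ``in general,'' it suffices to exhibit one semiregular $f$ for which
\[
D(r)\coloneq \frac{1}{|\partial \mathbb B_r|}\int_{\partial \mathbb B_r}\log|f(w)|\,d\sigma(w)-\frac{1}{|\partial \mathbb B_r|}\int_{\partial \mathbb B_r}\log|(f\circ S_f)(w)|\,d\sigma(w)
\]
is not constant in $r$. Slice-preserving functions are useless here: $S_f(q)=\conj q$ and $|f(\conj q)|=|f(q)|$, so $D\equiv 0$. The simplest candidate is therefore a degree-one regular polynomial with a non-real coefficient,
\[
f(q)=q-a,\qquad a\in\hh\setminus\rr,
\]
for instance $a=I$ or $a=1+I$.

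The first step is to make the integrand explicit. Since $f'_s\equiv 1$, we get $S_f(q)=f(q)^{-1}\conj q f(q)$ away from $\mathcal Z(f^s)=\s_a$. The cleaner route uses Corollary \ref{sphericalConj}: $\log|f(S_f(w))|=\log|f^s(w)|-\log|f(w)|$, which gives
\[
D(r)=\frac{2}{|\partial \mathbb B_r|}\int_{\partial \mathbb B_r}\log|w-a|\,d\sigma(w)-\frac{1}{|\partial \mathbb B_r|}\int_{\partial \mathbb B_r}\log|(w-a)^s|\,d\sigma(w).
\]
Here $(q-a)^s=q^2-2\Re(a)q+|a|^2$ is slice-preserving, so both averages are averages over spheres in $\rr^4$ of explicit functions.

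The second step is to compute, or at least expand, both averages for small $r<|a|$. This keeps all singularities outside $\overline{\mathbb B_r}$, and it applies to both pieces: the zero $a$ of $f$ and the zero sphere $\s_a$ of $f^s$ both lie outside the ball. The function $\log|w-a|$ is not harmonic in $\rr^4$ but is biharmonic, so the mean-value expansion used in the proof of Theorem \ref{Jensen} applies:
\[
\text{avg}_{\partial\mathbb B_r}u=u(0)+\frac{r^2}{8}\Delta u(0)
\]
exactly, for biharmonic $u$. The same formula holds for $\log|f^s|$ by \cite[Theorem 2.10]{Altavilla_2019}. Hence
\[
D(r)=\bigl(2\log|a|-\log|a|^2\bigr)+\frac{r^2}{8}\bigl(2\Delta\log|w-a|-\Delta\log|f^s|\bigr)(0)=\frac{r^2}{8}\,c,
\]
where $c$ is a constant.

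The third step, and the main obstacle, is to show that $c\neq 0$. In $\rr^4$ we have $\Delta\log|w-a|=2/|w-a|^2$, so $2\Delta\log|w-a|(0)=4/|a|^2$. For $\Delta\log|f^s|(0)$ I would use the formula from \cite[Proposition 8]{Altavilla_2019}; equivalently, it can be read off from the $R^2$ terms of Theorem \ref{jensen2} applied to $f$, which equal $-\tfrac{R^2}{4}\Re\bigl((a^{-1})^2\bigr)$ here, since $\partial f/\partial q=1$ and the second slice derivative vanishes. Matching these against the expansion should give
\[
\Delta\log|f^s|(0)=\frac{4}{|a|^2}-\frac{4}{|a|^4}\bigl(2(\Re a)^2-|a|^2\bigr)\cdot(\text{const}),
\]
or a comparable expression. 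The upshot is that $c$ is a nonzero multiple of $2(\Re a)^2-|a|^2$. Choosing $a=I$ makes this equal to $-1\neq 0$, so $D(r)$ is a nonconstant quadratic in $r$. If the bookkeeping of constants becomes unreliable, the fallback is a direct computation with $a=I$: restrict to the slices $L_J$, parametrize $w=r(\cos\theta+J\sin\theta)$, note $|w-I|^2=r^2+1-2r\sin\theta\langle J,I\rangle$, and integrate over $\theta$ and $J\in\s$ with the weight $\sin^2\theta$ coming from the $\rr^4$ spherical measure. Comparing the $r^2$ coefficients of the two averages then exhibits the discrepancy directly.
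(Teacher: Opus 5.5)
Your strategy is sound and genuinely different from the paper's. However, the step you yourself call the main obstacle is not established: the formula you guess for $\Delta_4\log|f^s|(0)$ is wrong, and so is the resulting claim about $c$.

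No matching of constants is needed. For $f=q-a$ the $R^2$ terms of Theorem~\ref{jensen2} equal $-\frac{R^2}{4}\Re(a^{-2})$. These are exactly $-\frac{R^2}{16}\Delta_4\log|f^s|(0)$, by the identity from Perotti quoted at the start of the harmonic remainder subsection. Hence
\[
\Delta_4\log|f^s|(0)=4\Re(a^{-2})=\frac{4\bigl(2(\Re a)^2-|a|^2\bigr)}{|a|^4},
\]
with no additional $4/|a|^2$ term and no undetermined constant. Consequently
\[
c=\frac{4}{|a|^2}-4\Re(a^{-2})=\frac{8\bigl(|a|^2-(\Re a)^2\bigr)}{|a|^4},\qquad D(r)=\frac{r^2\bigl(|a|^2-(\Re a)^2\bigr)}{|a|^4}\quad (0<r<|a|).
\]
So $c\neq 0$ precisely when $a\notin\mathbb{R}$, and $D(r)=r^2$ for $a=I$.

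An independent check for $a=I$: here $f^s(q)=q^2+1$. Write $x=\Re q$ and $y$ for the length of the imaginary part of $q$. Then $\log|q^2+1|=x^2-y^2+O(|q|^4)$. The $\mathbb{R}^4$-Laplacian of a function $\phi(x,y)$ is $\phi_{xx}+\phi_{yy}+\frac{2}{y}\phi_y$, which gives $2-2-4=-4=4\Re(I^{-2})$.

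Your assertion that $c$ is a nonzero multiple of $2(\Re a)^2-|a|^2$ is false. It would give $c\neq0$ for real $a$, contradicting your own correct observation that $D\equiv 0$ for slice-preserving $f$. It would also give $c=0$ for your alternative candidate $a=1+I$, where in fact $c=2$. So the conclusion for $a=I$ is true, but your argument, which leaves a free constant, does not prove it. Replace the third step by the computation above.

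With that repair, your route compares favorably with the paper's. The paper argues by contradiction for an unspecified $f$ without zeroes or poles in the ball. It combines Theorem~\ref{jensen2} with the small-$r$ expansions of both spherical means to force $\Delta_4\log|f|(0)=\Delta_4\log|f\circ S_f|(0)$, and then the exact biharmonic mean-value identity for $\log|f|$. It concludes by appealing to the fact that semiregular functions need not be log-biharmonic, without exhibiting a specific function.

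You instead produce an explicit witness and a closed form for the discrepancy. Your exactness comes from the terminating mean-value expansion for biharmonic functions rather than $O(r^4)$ asymptotics. Your example also shows that non-log-biharmonicity of $f$ is not what drives the phenomenon: $\log|q-a|$ is biharmonic on $\mathbb{R}^4\setminus\{a\}$. In the paper's scheme this $f$ is excluded already at the intermediate equality of Laplacians, which fails by exactly $c$.
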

   
   \begin{remark}
    Theorem \ref{FirstMainTheorem} holds for a constant depending on $f$ and $a$, but not on $r$. Proposition \ref{mIntC_fProp} shows that if one ignores the integral over $f\circ S_f$, such a constant independent of $r$ cannot, in general, be achieved.
    \end{remark}
       
    \begin{proof}
    The proof of this fact essentially comes down to the fact that a semiregular function $f$ is not necessarily log-biharmonic (see \cite[Remark 2.8]{Altavilla_2019} for a more detailed discussion on this). Assume to the contrary that \begin{equation}\label{eq:IntEqualityCont}\frac{1}{|\partial \mathbb B_r|}\int_{\partial \mathbb B_r}\log|f(w)|\,d\sigma(w)= \frac{1}{|\partial \mathbb B_r|}\int_{\partial \mathbb B_r}\log |(f\circ S_f)(w)|\,d\sigma(w)+C_f.\end{equation}
    Now choose $f$ without zeroes and poles in $\mathbb B_r$. Then, recalling Theorem \ref{jensen2}, we have \begin{align}\label{eq:IntEquality2}
        \log|f(0)|&=\frac1{|\partial \mathbb B_r|}\int_{\partial \mathbb B_r}\log|f(w)|\,d\sigma(w) -\frac{r^2}{4}\Re \left(\left(f(0)^{-1}\conj{\frac{\partial f}{\partial q}(0)}\right)^2\right)\nonumber\\
        &\quad +\frac {r^2}4\Re\left(f(0)^{-1}\frac{\partial^2f}{\partial q^2}(0)\right) + C_f \nonumber\\
        &= \frac1{|\partial \mathbb B_r|}\int_{\partial \mathbb B_r}\log|f(w)|\,d\sigma(w)-\frac{r^2}{16}\Delta_4\log |f^s(q)|_{\mid q=0}+C_f
    \end{align}
    where in the last line, we have undone the calculation of the Laplacian in \cite[Proposition 8]{perottiJensen2019}. Now consider the expansion of spherical mean values (see \cite[Theorem 3]{Ovall2016}) \[\frac1{|\partial \mathbb B_r|}\int_{\partial \mathbb B_r}\log|f(w)|\,d\sigma(w)=\log|f(0)|+\frac{r^2}{8}\Delta_4\log|f(q)|_{\mid q=0}+O(r^4)\] and \[\frac1{|\partial \mathbb B_r|}\int_{\partial \mathbb B_r}\log|(f\circ S_f)(w)|\,d\sigma(w)=\log|(f\circ S_f)(0)|+\frac{r^2}{8}\Delta_4\log|(f\circ S_f)(q)|_{\mid q=0}+O(r^4).\] Applying Equation \ref{eq:IntEqualityCont} implies \[\log|f(0)|+\frac{r^2}{8}\Delta_4\log|f(q)|_{\mid q=0}=\log|(f\circ S_f)(0)|+\frac{r^2}{8}\Delta_4\log|(f\circ S_f)(q)|_{\mid q=0}+O(r^4).\] We note that the $O(r^4)$ terms are negligible for sufficiently small $r$. Consequently, \[\Delta_4\log|f(q)|_{\mid q=0}=\Delta_4\log|(f\circ S_f)(q)|_{\mid q=0}.\] Hence, returning to Equation \ref{eq:IntEquality2}, and recalling Equation \ref{sphericalConj} and the linearity of the Laplacian,  we have \[\log|f(0)|= \frac1{|\partial \mathbb B_r|}\int_{\partial \mathbb B_r}\log|f(w)|\,d\sigma(w)-\frac{r^2}{8}\Delta_4\log |f(q)|_{\mid q=0}+C_f.\] But this is just the biharmonic mean value identity (with the addition of the $C_f$ term).\footnote{This can only hold with an $O(r^4)$ error term.}) But a general semiregular function $f$ need not be log-biharmonic. Hence, we have a contradiction. 
    \end{proof}
    \begin{remark}
    The proof of Proposition \ref{mIntC_fProp} demonstrates that the class of functions for which the integrals over $f$ and $f\circ S_f$ are equivalent up to a constant is exactly the log-biharmonic one, i.e., the slice-preserving one. 
    \end{remark}
                                               
The natural question is whether Proposition \ref{mIntC_fProp} holds if we weaken the $C_f$ term to be any $O(1)$ error term. This turns out to be a difficult question to answer directly. Instead, we shall consider the subset $\mathcal{MPB}(\Omega_D)\subseteq \mathcal {SR}(\Omega_D)$ for which the integrals are equivalent up to $O(1)$ \footnote{We introduce this notation for clarity in the present definition, but do not use $\mathcal{SR}(\Omega_D)$ or $\mathcal{MPB}(\Omega_D)$ elsewhere.}. 

\begin{defin}\label{def:MPB(Omega)}
Let $\mathcal{SR}(\Omega_D)$ be the set of all semiregular functions on $\Omega_D$. Let $f\in \mathcal{SR}(\Omega_D):\Omega_D\to \hp $ be semiregular and nonconstant, and let $\overline{\mathbb B_r}\subseteq \Omega_D$. Let $R\coloneq \sup \{r>0\mid \overline{\mathbb B_r}\subseteq \Omega_D\}$. Let $\mathcal{R}\coloneq \{r\in(0,R)\mid \partial \mathbb B_r\cap \mathcal{ZP}(f)=\varnothing\}$. Then, we define 
\[
\mathcal{MPB}(\Omega_D) \coloneq
\left\{
f \in \mathcal{SR}(\Omega_D) \;\middle|\;
\begin{aligned}
&\forall a \in \hh,\; \forall r \in \mathcal R,\\
&\frac{1}{|\partial \mathbb B_r|}
\int_{\partial \mathbb B_r}
\log\left|
\frac{f(w)}{(f\circ S_{f-a})(w)}
\right|\, d\sigma(w)
= O(1)
\end{aligned}
\right\}.
\]
  
\end{defin}

We shall refer to such functions as mean proximity balanced functions. We first note that slice-preserving functions are mean proximity balanced (in fact, up to equality), despite the fact that $f-a$ fails to be slice-preserving (and thus log-biharmonic) in general.

\begin{prop}
    Let $f$ be semiregular and slice-preserving on $\Omega_D$. Then, $f$ is mean proximity balanced. 
\end{prop}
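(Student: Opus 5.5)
The plan is to show that for slice-preserving $f$ the integrand in Definition \ref{def:MPB(Omega)} vanishes pointwise almost everywhere on $\partial\mathbb B_r$. The integral is then exactly $0$, which is stronger than $O(1)$ and is the ``up to equality'' claim.

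\textbf{Step 1: $|f|$ is constant on each sphere $\s_q$.} Write $f=\mathcal I(F_1+\imath F_2)$ with $F_1,F_2$ real valued. For $q=x+Iy$ we have $f(q)=F_1(z)+IF_2(z)$ with $z=x+\imath y$. Since $I$ is a unit imaginary quaternion and $F_1,F_2$ are real,
\[|f(q)|^2=F_1(z)^2+F_2(z)^2,\]
and this does not depend on $I\in\s$. Hence $|f(p)|=|f(q)|$ whenever $p\in\s_q$. At poles both sides are $\infty$ on the whole sphere; these are excluded anyway since $r\in\mathcal R$.

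\textbf{Step 2: $S_{f-a}$ maps each sphere into itself.} Fix $a\in\hh$. The spherical derivative of the constant $a$ is zero, so $(f-a)'_s=f'_s$. There are two cases in the definition of $S_{f-a}$.
\begin{itemize}
\item On $\overline{\mathcal Z(f'_s)}$ we have $S_{f-a}(q)=\conj q\in\s_q$.
\item Elsewhere, $S_{f-a}(q)=f'_s(q)\,(f-a)(q)^{-1}\conj q\,(f-a)(q)\,f'_s(q)^{-1}$. This is obtained from $\conj q$ by two successive conjugations by nonzero quaternions. By the Remark that $a^{-1}ba\in\s_b$, each conjugation preserves the sphere $\s_{\conj q}=\s_q$.
\end{itemize}
In both cases $S_{f-a}(q)\in\s_q$. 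Combining with Step 1 gives $|(f\circ S_{f-a})(w)|=|f(w)|$, so $\log|f(w)/(f\circ S_{f-a})(w)|=0$ wherever both sides are defined.

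\textbf{Step 3: the exceptional set is $\sigma$-null, so the integral is $0$ for every $a\in\hh$ and $r\in\mathcal R$.} The integrand is undefined only on the following sets:
\begin{itemize}
\item $\mathcal{ZP}((f-a)^s)$, where $S_{f-a}$ is undefined;
\item the zeros of $f$, which do not meet $\partial\mathbb B_r$ since $r\in\mathcal R$.
\end{itemize}
By the Remark on semiregular functions, within the compact set $\overline{\mathbb B_r}$ the set $\mathcal{ZP}((f-a)^s)$ is a finite union of real points and $2$-spheres. Its intersection with the $3$-sphere $\partial\mathbb B_r$ is therefore a finite union of points and $2$-spheres, which has surface measure zero. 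This gives the conclusion.

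\textbf{Main obstacle.} The only delicate point is this measure-zero bookkeeping: making sure the singular set of $S_{f-a}$ does not contribute. The rest is the two elementary observations in Steps 1 and 2.
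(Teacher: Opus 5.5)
Your proof is correct, but it takes a different route from the paper's.

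The paper expands $f=\sum_n a_nq^n$ about $0$, assuming $0$ is not a pole. It writes $S_{f-a}(q)=(f-a)(q)^{-1}\overline{q}\,(f-a)(q)$ and uses the reality of the $a_n$ to push the conjugation through the series term by term. This gives $(f\circ S_{f-a})(q)=(f-a)(q)^{-1}f(\overline{q})(f-a)(q)$, hence $|(f\circ S_{f-a})(q)|=|f(\overline{q})|$. It then concludes at the level of integrals, via the measure-preserving isometry $q\mapsto\overline{q}$ of $\partial\mathbb{B}_r$.

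You instead combine two structural facts. First, $|f|$ is constant on each sphere $\s_q$, which you read off directly from the real stem components. Second, $S_{f-a}$ maps every point into its own sphere, which holds for any $f$ because $S_{f-a}(q)$ is a conjugate of $\overline{q}$. Together these give pointwise vanishing of the integrand, with no series and no change of variables.

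Your route buys several things:
\begin{itemize}
\item It avoids the question of whether the power series converges on all of $\partial\mathbb{B}_r$. The paper handles this only through a footnote about re-centering. Re-centering at a nonreal point gives an expansion in $*$-powers of $q-q_0$, to which the term-by-term commutation argument does not directly apply.
\item It needs no normalization at $0$.
\item It shows that the particular form of $S_{f-a}$ is irrelevant: any sphere-preserving map would do.
\item It explicitly disposes of the set where $S_{f-a}$ is undefined, namely the $a$-points of $f$ on $\partial\mathbb{B}_r$. The condition $r\in\mathcal{R}$ does not exclude these, and the paper passes over them.
\end{itemize}

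The paper's computation does give a finer quaternionic identity for $f\circ S_{f-a}$, not just equality of moduli. That identity also follows without series from $f(p^{-1}qp)=p^{-1}f(q)p$ for slice-preserving $f$ and $p\neq 0$, which is again immediate from the stem representation.

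One small citation fix: the fact that $\mathcal{Z}((f-a)^s)\cap\overline{\mathbb{B}_r}$ is a finite union of real points and spheres comes from the identity principle applied to $(f-a)^s\not\equiv 0$ (using that $f$ is nonconstant). It does not follow from the remark on poles.
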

\begin{proof}
    One proof follows from the fact that $(f-a)^s=(f-a)^2$, and using \cite[Proposition 8]{perottiJensen2019}. A more insightful proof is as follows. 

    Assume without loss of generality that $f$ does not have a zero or pole at $q=0$, so that $f$ admits a power series. We have $f(q)=\sum_{n\in\mathbb N}a_nq^n$, and $S_{f-a}(q)=(f-a)^{-1}\conj q (f-a)$.\footnote{Suppose $q$ is outside the radius of convergence of the power series. In this case, we may take $q_0\not\in\mathcal{ZP}(f)$ close to $q$, and utilize the power series expansion centered at $q_0$.} Hence, \[(f\circ S_{f-a})(q)=\sum_{n\in \mathbb N}a_n ((f-a)^{-1}\conj q (f-a))^n.\] By the identity $(x^{-1}yx)^n=x^{-1}y^nx$, we have \[(f\circ S_{f-a})(q)=\sum_{n\in \mathbb N}a_n (f-a)^{-1}(\conj q)^n (f-a).\] The power series coefficients $a_n$ are real because $f$ is slice-preserving, and thus commute. Hence, \begin{align}
        (f\circ S_{f-a})(q)&=(f-a)^{-1}\left(\sum_{n\in\mathbb N}a_n (\conj q)^n\right) (f-a) \\
        &= (f-a)^{-1}f(\conj q)(f-a).
    \end{align}
    Hence, $|(f\circ S_{f-a})(q)|=|f(\conj q)|$, and consequently, \[\frac{1}{|\partial \mathbb B_r|}\int_{\partial \mathbb B_r}\log|(f\circ S_{f-a})(q)|\,d\sigma (q)=\frac{1}{|\partial \mathbb B_r|}\int_{\partial \mathbb B_r}\log|f(q)|\,d\sigma (q),\] because $q \mapsto\conj q$ is a measure-preserving isometry of $\partial\mathbb B_r$. 
\end{proof}

\begin{prop}
Let $f$ be semiregular on $\Omega_D$, and let $g,h$ be regular on $\Omega_D$ so that $f=g/h$. Let \[ g(q)=\sum_{n\in\mathbb{N}} a_nq^n, \quad h(q)=\sum_{m\in\mathbb N}b_m q^m.\] Let $n_0$ and $m_0$ be unique dominating indices so that \[|a_{n_0}|R^{n_0}\gg\sum_{n\in\mathbb{N}\setminus\{n_0\}} |a_n|R^n, \quad |b_{m_0}|R^{m_0}\gg\sum_{m\in\mathbb{N}\setminus\{m_0\}} |b_m|R^m.\] Then, $f$ is mean proximity balanced.
\end{prop}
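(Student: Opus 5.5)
The idea is to show that, under the dominating-index hypothesis, both $|f(w)|$ and $|(f\circ S_{f-a})(w)|$ are comparable to a monomial modulus on $\partial\mathbb B_r$. The logarithm of their ratio is then bounded, and so is its spherical mean. I interpret the hypothesis ``$\gg$'' quantitatively: there is $\varepsilon<1$ with $\sum_{n\neq n_0}|a_n|r^n\le \varepsilon|a_{n_0}|r^{n_0}$, and similarly for $h$, uniformly for $r\in\mathcal R$ (or at least for $r$ near the scale $R$ at which the hypothesis is stated).

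\textbf{Step 1: two-sided bounds on the sphere.} For any $w$ with $|w|=r$, the triangle inequality gives
\[
(1-\varepsilon)|a_{n_0}|r^{n_0}\le |g(w)|\le (1+\varepsilon)|a_{n_0}|r^{n_0},
\]
and the analogous bounds hold for $h$. The same bounds hold at any point of $\partial\mathbb B_r$, since the estimate used only $|w|=r$.

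\textbf{Step 2: $S_{f-a}$ preserves the sphere.} By the Remark that conjugation preserves $\s_b$, $S_{f-a}(w)$ lies in $\s_{\conj w}$, so $|S_{f-a}(w)|=|w|=r$. This holds in both cases of the definition of $S_{f-a}$, since $\conj w$ also has modulus $r$. Consequently the bounds of Step 1 apply to $g(S_{f-a}(w))$ and $h(S_{f-a}(w))$ as well.

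\textbf{Step 3: bound the ratio.} I read $f=g/h$ as $h^{-*}*g$ or pointwise, whichever matches the paper's convention. In the $*$-quotient case I use the product formula: $(h^{-*}*g)(q)=h^{-*}(q)\,g(\tilde q)$ for some $\tilde q\in\s_q$. Since $|h^{-*}|=|h^c|/|h^s|$ and $|h^s(q)|=|h(q)||h(S_h(q))|$ by Corollary \ref{sphericalConj}, every factor is again $h$ or $g$ evaluated at points of modulus $r$. Hence
\[
|f(w)|=\frac{|a_{n_0}|}{|b_{m_0}|}\,r^{n_0-m_0}\,e^{O(1)},
\]
with $O(1)$ bounded by $|\log(1-\varepsilon)|+\log(1+\varepsilon)$ and its analogue for $h$. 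By Step 2 the same estimate holds for $|(f\circ S_{f-a})(w)|$. The main terms cancel in the ratio, so
\[
\left|\log\left|\frac{f(w)}{(f\circ S_{f-a})(w)}\right|\right|\le C(\varepsilon)
\]
pointwise on $\partial\mathbb B_r$. This bound is uniform in $a$ and in $r\in\mathcal R$. Averaging over $\partial\mathbb B_r$ gives the $O(1)$ condition of Definition \ref{def:MPB(Omega)}.

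\textbf{Expected main obstacle.} The hard part is making the hypothesis usable. As stated it is imposed at the single radius $R$, whereas the definition requires control for every $r\in\mathcal R$. Dominance at $R$ does not automatically give dominance at smaller $r$: lower-index terms can win when $n_0>0$.

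I would handle this in two ways. First, I would note that for $r$ in a compact subinterval away from $0$, the quantity $\log|f|$ is bounded above and below by continuity and compactness, away from the finitely many zeros and poles. On that range the ratio is controlled directly, leaving only the behaviour as $r\to R$ to be governed by the dominance hypothesis. Second, if this does not suffice, I would state explicitly that ``$\gg$'' is assumed for all $r$ in the relevant range, which is the natural reading of the hypothesis.

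A secondary point is to check that the $*$-product points $\tilde q$ remain on $\partial\mathbb B_r$, which is immediate because conjugation preserves modulus.
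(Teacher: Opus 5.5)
Your Steps 1--3 are essentially the paper's proof: it also compares $|f|$ with the dominant monomial modulus $|a_{n_0}/b_{m_0}|\,|q|^{n_0-m_0}$ on $\partial\mathbb B_r$ and uses $|S_{f-a}(q)|=|q|$ so that the main terms cancel, getting an $o(1)$ (hence $O(1)$) difference, whereas your $\varepsilon$-version gives a bounded error directly and your handling of the $*$-quotient is more careful than the paper's. The paper resolves your ``main obstacle'' by reading the dominance hypothesis asymptotically as $|q|=R\to\infty$ (your second option), so the compactness fallback is unnecessary --- and as stated it would fail pointwise, since spheres $\partial\mathbb B_r$ with $r\in\mathcal R$ can pass arbitrarily close to zeros or poles of $f$.
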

\begin{proof}

The hypothesis guarantees \[f(q)=\frac{a_{n_0}}{b_{m_0}}q^{n_0-m_0}(1+o(1)),\] as $|q|=R\to \infty$. Hence, \[\log|f(q)|=(n_0-m_0)\log|q|+\log\left|\frac{a_{n_0}}{b_{m_0}}\right|+o(1).\] One easily confirms $|S_{f-a}(q)|=|q|$ by its definition. Note that the asymptotic expansion for $\log|f(q)|$ depends only on $|q|$; thus the same expansion applies to $\log|f(S_{f-a}(q))|$. Hence, we have \begin{align*}\log|f(S_{f-a}(q))|&=(n_0-m_0)\log|S_{f-a}(q)|+\log\left|\frac{a_{n_0}}{b_{m_0}}\right|+o(1)\\ 
&=(n_0-m_0)\log|q|+\log\left|\frac{a_{n_0}}{b_{m_0}}\right|+o(1).\end{align*} 

Thus, \[\log|f(S_{f-a}(q))|-\log|f(q)|=o(1)\] uniformly for $|q|=R$. Consequently,\footnote{The dominance assumption implies that the above $o(1)$ term is uniform on $\partial \mathbb B_R$. Since $S_f(\partial \mathbb B_R)=\partial \mathbb B_R$, the same uniform bound holds for $\log|f(S_{f-a}(q))|$. In particular, we have \[\limsup_{|q|=R, R\to \infty}|\log|f(S_{f-a}(q))|-\log|f(q)||=0.\]} \[\left|\frac{1}{|\partial \mathbb{B}_R|}\int_{\partial \mathbb{B}_R}\left(\log|f(S_{f-a}(q))|-\log|f(q)|\right)\,d\sigma(q)\right|=o(1),\] and in particular, \[\left|\frac{1}{|\partial \mathbb{B}_R|}\int_{\partial \mathbb{B}_R}\left(\log|f(S_{f-a}(q))|-\log|f(q)|\right)\,d\sigma(q)\right|=O(1).\]

\end{proof}

A sharp characterization of mean proximity balanced functions, as well as the question of whether every semiregular function enjoys this property, remains unresolved in the present work.

Having settled this point, we proceed to define proximity functions. As in the classical case, we first define Weil functions in our context. 

\begin{defin}\label{def:Weil}
    Let $a\in\hp$. A Weil function with a singularity at $a$ is a continuous map $\lambda_a:\hp \setminus\{a\}\to \rr$ such that in some open neighborhood $U$ of $a\in\hp$, there exists a continuous function $\alpha$ on $U\subseteq\hh$ such that $\lambda_a(q)=-\log|q-a|+\alpha(q)$. 
\end{defin}

Note that $q$ here is a local coordinate, so in a neighborhood of $q=\infty$, we instead look at $\lambda_a(q^{-1})$. 

\begin{remark}\label{remark:WeilO(1)}
    As in the classical case, the difference between any two Weil functions with the same singular point $a$ is bounded due to the compactness of $\hp$. This affords us the convenience of choosing suitable Weil functions to achieve differing error terms.
\end{remark}

Consequently, we have the corresponding mean proximity function. 

\begin{defin}[Mean Proximity Function]
Let $f$ be semiregular on $\Omega_D$. Then, \[m(f,\lambda_a,r)\coloneq \frac{1}{|\partial \mathbb B_r|}\int_{\partial \mathbb B_r}\lambda_a(f(q))\,d\sigma(q).\]
\end{defin}

\begin{remark}\label{remark:m(f,lambda_a,r)O(1)}
Let $\lambda_a$ and $\tilde\lambda_a$ be two Weil functions with the same singularity $a$. 
It follows from Remark \ref{remark:WeilO(1)} that
\[
m(f,\lambda_a,r)-m(f,\tilde\lambda_a,r)=O(1),
\]
uniformly for all $r\in\mathcal R$. Thus, the mean proximity function is well-defined up to an $O(1)$ term.
\end{remark}

In view of this, our results will be independent of choice of Weil function up to an $O(1)$ term. We therefore fix the Weil function used by \cite{Nevanlinna1925} for the remainder of this work. 

\begin{defin}[Analytic Mean Proximity Function]\label{def:AnalMPF}
Let $f$ be semiregular on $\Omega_D$. Let 
\[\lambda_a(q)=\begin{cases}
        \log^+\frac1{|q-a|}  &\text{if} \quad  a,q\neq \infty, \\\log^+ |q| &\text{if} \quad a=\infty 
    \end{cases} \quad \text{and} \quad \lambda_a(\infty)=0 \quad \text{if} \quad a\neq\infty,\] where $\log^+x=\max\{0,\log x\}$.
Then, \[m(f,a,r)\coloneq \frac{1}{|\partial \mathbb B_r|}\int_{\partial \mathbb B_r}\lambda_a(f(q))\,d\sigma(q).\]
\end{defin}

One of the primary motivations for utilizing $\log^+$ over $\log$ is that the former ensures that ``distance," in the sense of proximity, is always nonnegative. 

\subsection{Harmonic Remainder Function}

This is the only elementary Nevanlinna function that does not have an analogue in the complex case. It comes from the term \[-\frac{r^2}{4}\Re \left(\left(f(0)^{-1}\conj{\frac{\partial f}{\partial q}(0)}\right)^2\right)+\frac {r^2}4\Re\left(f(0)^{-1}\frac{\partial^2f}{\partial q^2}(0)\right),\] which is equivalent to (see \cite[Proposition 8]{perottiJensen2019}) \[-\frac{r^2}{16}\Delta_4\log |f^s(q)|_{\mid q=0}.\] Because this term contributes an error of $O(r^2)$, it cannot be ignored without weakening the resulting First Main Theorem. 

\begin{defin}\label{def:HarmonicRemainderFunction}
     Let $f$ be semiregular on $\Omega_D$, and let $a\in\hp$. Then for all $r\in \mathcal{R}$, we define \[H(f,a,r)\coloneq \frac{r^2}{16}\Delta_4\log \left|(f(q)-a)^s\right|_{\mid q=0},\] and $H(f,\infty,r)\equiv0$.
\end{defin}

The Harmonic Remainder Function corrects for the failure of $\log|f^s|$ to be harmonic, which leads to a Laplacian correction term in the Jensen formula. It corrects the discrepancy between Theorem \ref{jensen2} applied to \(f\) and \(f-a\). For \(a=\infty\), no discrepancy arises because Jensen is applied directly to \(f\), so we set \(H(f,\infty,r)\equiv0\). 

Unlike the case of the Mean Proximity Function, discussion on the dependence of the Harmonic Remainder Function on the symmetrization $f^s$ is uninteresting. Indeed, by \cite[Proposition 8]{perottiJensen2019} the Laplacian term may be written entirely in terms of $f$ and its slice derivatives at the center. Thus, $H(f,a,r)$ is an intrinsic function of $f$, $a$, and $r$. 

\subsection{Characteristic Function}

The final function to be defined is the analogue of the characteristic function. There are two natural definitions: one adapted to mean proximity balanced functions, and one valid in general depending on the symmetrization $f^s$. For the mean proximity balanced functions, these two definitions differ by at most $O(1)$, which is absorbed by the First Main Theorem regardless. As such, we adopt the general definition throughout this work.\footnote{For mean proximity balanced functions, a more natural definition is simply $T(f,a,r)=N(f,a,r)+m(f,a,r)-H(f,a,r)$.} 

\begin{defin}[Nevanlinna Characteristic Function]\label{def:characteristic}
    Let $f$ be semiregular on $\Omega_D$, and let $a\in\hh$. Then for all $r\in \mathcal{R}$, \[T(f,a,r)\coloneq N(f,a,r)+\frac{1}{2}m((f-a)^s, 0, r)-H(f,a,r).\] If $a=\infty$, then \[T(f,r)\coloneq T(f,\infty,r)= N(f,\infty, r)+\frac{1}{2}m(f^s,\infty,r),\] recalling that $H(f,\infty,r)\equiv 0$. 
\end{defin}

The well-definedness of $T(f,r)$ is the subject of Theorem \ref{teo:FMT}.  

In terms of notation, we adopt the conventions of \cite{cherry2001nevanlinna}. Note that due to Remark \ref{remark:m(f,lambda_a,r)O(1)}, the characteristic is well-defined up to $O(1)$, even if utilizing a different Weil function. 

A few of the algebraic properties of the characteristic function carry over from the classical case, with modifications due to noncommutativity. Unlike the classical case, many of these identities can be proved only in the case of $a=\infty$. However, the transport of Theorem \ref{teo:FMT} extends these properties to arbitrary $a$.\footnote{Of note, this transport is $O(1)$ only in the case of mean proximity balanced functions. In the general case, one achieves much weaker identities due to worse error terms.}

Notably, subadditivity over addition involves a more complicated nontrivial mixed proximity term.\footnote{This is ultimately related to the possibility of functions not being mean proximity balanced.}

\begin{prop}\label{prop:CharacteristicAlgera}
    Let $f,g$ be semiregular on $\Omega_D$. 
Then for all $r\in\mathcal R$,
\begin{align}
    T(f^{n*},\infty,r)&=nT(f,\infty,r) \label{eq:1CharacteristicAlgera}\\
    T(f*g,\infty,r) &\leq T(f,\infty,r)+T(g,\infty,r)  \label{eq:2CharacteristicAlgera}\\ 
    T(f+g,\infty,r)&\leq T(f,\infty,r)+T(g,\infty,r)+\log 3 \label{eq:3CharacteristicAlgera}\\
    &\quad +\frac{1}{2}m(f*g^c+g*f^c,\infty, r) \notag\\
    T(f^c,a,r)&=T(f,a,r) = \frac{1}{2}T(f^s,\infty,r). \label{eq:4CharacteristicAlgera}
  \end{align}
\end{prop}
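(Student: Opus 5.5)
The plan is to reduce everything to statements about the slice-preserving symmetrization, since by Definition \ref{def:characteristic} we have $T(f,\infty,r)=N(f,\infty,r)+\frac12 m(f^s,\infty,r)$. The two components are handled separately. For the counting part we use Definition \ref{totalOrderr}: the total order of $f$ at $\s_\zeta$ is the classical order of $f^s$ there. We will check that symmetrization is multiplicative, $(f*g)^s=f^s g^s$, with the right-hand side a pointwise product because both factors are slice-preserving. Granting this, $\ordt_{\s_\zeta}(f*g)=\ordt_{\s_\zeta}(f)+\ordt_{\s_\zeta}(g)$. Pole orders add, except where zeroes of one factor cancel poles of the other, which only lowers the count. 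This gives $N(f*g,\infty,r)\le N(f,\infty,r)+N(g,\infty,r)$, with equality for $f^{n*}$. For the proximity part, $\log^+|f^sg^s|\le\log^+|f^s|+\log^+|g^s|$ pointwise, and averaging over $\partial\mathbb B_r$ gives \eqref{eq:2CharacteristicAlgera}.

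For \eqref{eq:1CharacteristicAlgera}, we have $(f^{n*})^s=(f^s)^n$. Since $\log^+|x^n|=n\log^+|x|$ exactly, the proximity terms scale exactly, and equality holds.

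For \eqref{eq:4CharacteristicAlgera}, note $(f^c)^s=f^c*f=f^s$ and $(f^c-a)^s=((f-a)^c)^s$ when $a$ is real. For general quaternionic $a$ we instead use $(f^c-a^c)$; I expect this to require checking that $\ordt$, $m((\cdot)^s,0,r)$ and $H$ depend only on the symmetrization. Given that, the equality $T(f^c,a,r)=T(f,a,r)$ follows. For $T(f,\infty,r)=\frac12T(f^s,\infty,r)$, observe that $f^s$ is slice-preserving, so $(f^s)^s=(f^s)^2$. This gives $N(f^s,\infty,r)=2N(f,\infty,r)$ by counting total order via $f^s$, and $m((f^s)^s,\infty,r)=2m(f^s,\infty,r)$. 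Hence $T(f^s,\infty,r)=2N(f,\infty,r)+m(f^s,\infty,r)=2T(f,\infty,r)$. Extending this to arbitrary $a$ would go through Theorem \ref{teo:FMT}.

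The main obstacle is \eqref{eq:3CharacteristicAlgera}. Expanding, we get
\[
(f+g)^s=(f+g)*(f^c+g^c)=f^s+g^s+(f*g^c+g*f^c),
\]
where the last term is slice-preserving. Pointwise, $\log^+|A+B+C|\le\log^+|A|+\log^+|B|+\log^+|C|+\log3$. Averaging produces $m(f^s)+m(g^s)+m(f*g^c+g*f^c)+\log 3$, and halving gives $\frac12\log3$; the stated $\log3$ is therefore safe. For the counting term, poles of $(f+g)^s$ on a sphere have order at most the maximum of the pole orders of $f^s$ and $g^s$. The mixed term has poles of order at most $\ordt(f,\infty)+\ordt(g,\infty)$ on each sphere, so it does not push past $f^s g^s$. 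Thus $\ordt_{\s_\zeta}(f+g,\infty)\le\ordt_{\s_\zeta}(f,\infty)+\ordt_{\s_\zeta}(g,\infty)$, and summing against the (nonnegative for $r\ge|\zeta|$?) kernel $J(\zeta,r)$ gives the counting inequality.

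The sign of $J$ needs care, and this is where I expect the real difficulty. If $J$ is not nonnegative, I would fall back on Proposition \ref{quatIntCou2}'s decomposition to compare termwise.
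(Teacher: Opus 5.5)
Your decomposition is the same as the paper's: $T(\cdot,\infty,r)=N(\cdot,\infty,r)+\frac12 m(\cdot^s,\infty,r)$, the identity $(f*g)^s=f^sg^s$, subadditivity of $\log^+$, the expansion of $(f+g)^s$, and $(f^s)^s=(f^s)^2$. Your arguments for \eqref{eq:1CharacteristicAlgera}, and for \eqref{eq:4CharacteristicAlgera} at $a=\infty$ and at real $a$, are fine, since there everything is an exact identity.

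The gap is the one you flagged, and it cannot be closed, because the kernel really is not nonnegative. For real $\zeta$ with $0<|\zeta|<r$,
\[
J(\zeta,r)=\log\frac{r}{|\zeta|}+\frac{\zeta^2}{4r^2}-\frac{r^2}{4\zeta^2}<0 .
\]
In $s=|\zeta|/r$ this is $-\log s+\frac{s^2}{4}-\frac{1}{4s^2}$, whose derivative $(1-s^2)^2/(2s^3)$ is nonnegative, and it vanishes at $s=1$. So real poles enter $N(\cdot,\infty,r)$ with negative weight, and a zero of $g$ cancelling a pole of $f$ \emph{raises} $N$. Falling back on Proposition \ref{quatIntCou2} cannot help, because its extra summand is exactly the negative term $\frac{\zeta^2}{4r^2}-\frac{r^2}{4\zeta^2}$.

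In fact \eqref{eq:2CharacteristicAlgera} is false as stated. Take $p\in\rr\setminus\{0\}$, $f=(q-p)^{-1}$, $g=q-p$. Then $f*g\equiv 1$, so $T(f*g,\infty,r)=0$. For $r>|p|+1$ one has $m(f^s,\infty,r)=0$, $T(f,\infty,r)=J(p,r)$ and $T(g,\infty,r)\le\log(r+|p|)$, so $T(f,\infty,r)+T(g,\infty,r)\to-\infty$. Replacing $g$ by $1-(q-p)^{-1}$ gives $f+g\equiv1$, while the right-hand side of \eqref{eq:3CharacteristicAlgera} is at most $2J(p,r)+\log 12\to-\infty$.

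The paper's proof has the same hole. It asserts $\ordt_{\s_\zeta}(f*g,a)=\ordt_{\s_\zeta}(f,a)+\ordt_{\s_\zeta}(g,a)$, which fails under cancellation. It also passes from ``$f+g$ has a pole only where $f$ or $g$ does'' to $N(f+g,\infty,r)\le N(f,\infty,r)+N(g,\infty,r)$, which tacitly assumes $J\ge 0$.

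Two further steps fail regardless of the sign of $J$.

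\emph{The mixed-term bound in \eqref{eq:3CharacteristicAlgera}.} This breaks because $\ordt$ is a net count. Let $f=(q-i)^{*3}(q^2+1)^{-1}$. Then $f^s=q^2+1$, so $\ordt_{\s}(f,\infty)=0$, yet $f$ has a genuine pole on $\s$. The mixed term $f+f^c=(2q^3-6q)(q^2+1)^{-1}$ has a pole there, although $\ordt_{\s}(f,\infty)+\ordt_{\s}(1,\infty)=0$. Also $(f+1)^s=q^2+2+(2q^3-6q)(q^2+1)^{-1}$, so $\ordt_{\s}(f+1)=-1$. Since $J(i,r)=\log r+\frac{r^4-1}{4r^2}>0$, the left side of \eqref{eq:3CharacteristicAlgera} for $f$ and $g=1$ is at least $J(i,r)\sim r^2/4$, while the right side is $O(\log r)$.

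\emph{The identity \eqref{eq:4CharacteristicAlgera} for nonreal $a$.} Your substitution gives $(f^c-\bar a)^s=(f-a)^s$, i.e. $T(f^c,\bar a,r)=T(f,a,r)$, not the stated identity. Take $f=q+2j$, $a=j$: then $(f-j)^s=q^2+1$ but $(f^c-j)^s=q^2+9$. The counting and harmonic-remainder terms differ, and $T(f^c,j,r)\neq T(f,j,r)$. The paper's identity $(f^c-a)^s=((f-a)^c)^s$ carries the same restriction to real $a$.

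Likewise, $T(f,a,r)=\frac12T(f^s,\infty,r)$ can be an exact identity only at $a=\infty$, since the right side does not depend on $a$. Routing through Theorem \ref{teo:FMT} gives at best an approximate version.
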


\begin{remark}
By induction, the inequalities in Proposition \ref{prop:CharacteristicAlgera} extend to finite \(*\)-products and finite sums of semiregular functions.
\end{remark}

\begin{remark}
    The elementary techniques hold only at $a=\infty$ for the simple reason that terms like $(f*g-a)^s$ and $(f+g-a)^s$ cannot be dealt with without significant mixed terms. When we can have an elementary identity for generic $a$, we state it in the proof below. 
    
     In Equation \ref{eq:3CharacteristicAlgera}, the mixed term disappears for mean proximity balanced Functions (see Proposition \ref{prop:CharacteristicAlgebraMPB}). 
\end{remark}

\begin{proof}
    For Equation \ref{eq:1CharacteristicAlgera}, note that $\ordt_{\s_\zeta}\left(f^{n*},a\right)=n\ordt_{\s_\zeta}(f,a)$, and $(f^{n*})^s=(f^s)^{n*}=(f^s)^n$, and hence the result follows. For Equation \ref{eq:2CharacteristicAlgera}, note that $\ordt_{\s_\zeta}(f*g,a)=\ordt_{\s_\zeta}(f,a)+\ordt_{\s_\zeta}(g,a)$, and \[(f*g)^s=(f*g)*(g^c*f^c)=f*g^s*f^c=fg^sf^c,\] because $g^s$ is slice-preserving. Now note \begin{align*}
        (f*g)^s*f&=fg^sf^c*f =fg^sf^s \\
        (f*g)^sf&=fg^sf^s \\
        (f*g)^s&=fg^sf^sf^{-1}\\
        |(f*g)^s|&=|f||g^s||f^s||f|^{-1}\\
        &=|f^s||g^s|,
    \end{align*} where we have used the fact that $(f*g)^s$ is slice-preserving, and the multiplicativity of the norm. Recalling $\log^+|xy|\leq \log^+|x|+\log^+|y|$, the result follows. 

    For Equation \ref{eq:3CharacteristicAlgera}, we deal with the proximity terms first. We have \((f+g)^s=f^s+f*g^c+g*f^c+g^s.\) By the triangle inequality, we have \[|(f+g)^s|\leq |f^s|+|g^s|+|f*g^c+g*f^c|.\] We cannot further bound the mixed terms. Then, applying $\log^+$ and recalling $\log^+|x+y+z|\leq \log^+|x|+\log^+|y|+\log^+|z|+\log 3$ yields, \[m((f+g)^s,\infty,r)\leq m(f^s,\infty,r)+m(g^s,\infty,r)+m(f*g^c+g*f^c,\infty,r)+\log 3.\] $N(f+g,a,r)$ is not trivially bounded above in terms of $N(f,a,r)+N(g,a,r)$. However, $f+g$ can only have a pole if $f$ or $g$ has a pole. Thus, $N(f+g,\infty,r)\leq N(f,\infty, r)+N(g,\infty,r)$, which yields the desired result. 

    Finally, Equation \ref{eq:4CharacteristicAlgera} follows from the identities \(\ordt_{\s_\zeta}(f^c,a)=\ordt_{\s_\zeta}(f,a)=\tfrac12\ordt_{\s_\zeta}(f^s,a)\), together with \((f^c-a)^s=((f-a)^c)^s=(f-a)^s\) and \((f^s-a)^s=(f^s-a)^2\).
\end{proof}

\section{A First Main Theorem}\label{sec:FirstMainTheorem}

We are now prepared to state a First Main Theorem derived from Theorem \ref{jensen2} and the discussions in Section \ref{sec:NevanlinnaFunctions}. 

\begin{teo}[First Main Theorem]\label{teo:FMT}
    Let $a\in\hp$ and let $f\not\equiv a,\infty$ be semiregular on $\Omega_D$. Then, for all $r\in\mathcal R$, 
    \begin{equation}\label{eq:FMT1} N(f,a,r)+\frac{1}{2}m((f-a)^s,0,r)-H(f,a,r)=T(f,r)+O(m(ff^c,\infty,r))+O(1).\end{equation} 
    It also holds,
    \begin{align}\label{eq:FMT2} N(f,a,r)+\frac{1}{2}m(f,a,r)+\frac{1}{2}m(f\circ S_{f-a},a,r)-H(f,a,r)&=T(f,r)+O(1) \\&\quad -\frac{1}{2}m(f\circ S_f,\infty,r) \notag\\
    &\quad+\frac{1}{2}m(f\circ S_{f-a},\infty,r)\notag\end{align}
    Moreover, if $f$ is a mean proximity balanced function, then it holds 
    \begin{equation}\label{eq:FMT3} N(f,a,r)+m(f,a,r)-H(f,a,r)=T(f,r)+O(1). \end{equation}
\end{teo}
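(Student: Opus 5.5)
The plan is to apply the Jensen formula with total order (Theorem \ref{jensen2}) twice: once to $f-a$ and once to $f^{-*}$. Then compare the two identities, using the classical device $\log x=\log^+x-\log^+(1/x)$ to separate each boundary integral into proximity pieces.

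\emph{Step 1: the pole side.} Assume first that $0\notin\mathcal{ZP}(f-a)\cup\mathcal{ZP}(f)$; the general case follows from Remark \ref{zeroHandleRemark}, with the extra $n(\cdot,0)\log r$ terms absorbed exactly as in Definition \ref{def:IntegratedCountingFunction}. Apply Theorem \ref{jensen2} to $f^{-*}$. Since $(f^{-*})^s=(f^s)^{-1}$ and Corollary \ref{sphericalConj} gives $\log|f^s|=\log|f|+\log|f\circ S_f|$, the two boundary integrals combine into $\frac{1}{2|\partial\mathbb B_r|}\int\log|f^s|^{-1}$. For a slice-preserving function the Laplacian correction is $\frac{r^2}{16}\Delta_4\log|(f^s)^{-1}|$ at $0$. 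Rather than write it out, I would instead apply the formula to $f^s$, which is slice-preserving, and halve the result, using $\ordt(f^s)=2\ordt(f)$. Writing $\log|f^s|=\log^+|f^s|-\log^+|1/f^s|$ turns the integral into $\frac12m(f^s,\infty,r)-\frac12m((f^s)^{-1},\infty,r)$. This yields
\[
\tfrac12\log|f^s(0)|=\tfrac12 m(f^s,\infty,r)-\tfrac12 m(1/f^s,\infty,r)+N(f,\infty,r)-N(f,0,r)-\tfrac{r^2}{16}\Delta_4\log|f^s|_{q=0}.
\]

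\emph{Step 2: the $a$ side and comparison.} Do the same for $f-a$, using $(f-a)^s$. The zero counting gives $N(f,a,r)$, since the poles of $f-a$ coincide with those of $f$. The proximity term $\frac12 m(1/(f-a)^s,\infty,r)$ equals $\frac12 m((f-a)^s,0,r)$, and the Laplacian term is $H(f,a,r)$. Rearranged, this says
\[
N(f,a,r)+\tfrac12 m((f-a)^s,0,r)-H(f,a,r)=N(f,\infty,r)+\tfrac12 m((f-a)^s,\infty,r)+O(1),
\]
where the constant is $-\frac12\log|(f-a)^s(0)|$. It remains to compare $m((f-a)^s,\infty,r)$ with $m(f^s,\infty,r)$. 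Expand
\[
(f-a)^s=f^s-(a f^c+f\bar a)+|a|^2 .
\]
The middle term is not controlled by $f^s$ pointwise; bounding it through $|f|$ and $|f\circ S_f|$ is what produces the error $O(m(ff^c,\infty,r))$. With $\log^+|x+y+z|\le\sum\log^+|\cdot|+\log3$ in both directions, this gives \eqref{eq:FMT1}.

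\emph{Step 3: the refined forms.} For \eqref{eq:FMT2}, split instead $\frac12\log|(f-a)^s|=\frac12\log|f-a|+\frac12\log|(f-a)\circ S_{f-a}|$ by Corollary \ref{sphericalConj}. Applying $\log^+$ splitting to each half gives $\frac12m(f,a,r)+\frac12m(f\circ S_{f-a},a,r)$ on the left. The $\infty$-proximities that appear are $\frac12m(f,\infty,r)+\frac12m(f\circ S_{f-a},\infty,r)$, up to $O(1)$ since $|\log^+|x-a|-\log^+|x||\le\log^+|a|+\log2$. Compare these with $T(f,r)$, which contains $\frac12m(f,\infty,r)+\frac12m(f\circ S_f,\infty,r)+O(1)$; this comparison uses $\log^+|xy|$ versus $\log^+|x|+\log^+|y|$, which is not an equality, so I would need care here. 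The mismatch is exactly the displayed correction terms. Finally \eqref{eq:FMT3}: membership in $\mathcal{MPB}$ gives $\int\log|f/(f\circ S_{f-a})|=O(1)$. The hard part is upgrading this statement about $\log$ to statements about $\log^+$, so that $m(f\circ S_{f-a},\cdot,r)=m(f,\cdot,r)+O(1)$ at both $a$ and $\infty$. I would try applying the definition with varying $a$ and bound the negative parts separately using the Jensen identity. I expect this step, together with the $\log^+$ subadditivity issue in Step 3, to be the main obstacle. Granting it, the correction terms cancel and the two half-proximities merge into $m(f,a,r)$.
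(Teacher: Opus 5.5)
Your treatment of \eqref{eq:FMT1} and \eqref{eq:FMT2} follows the paper's proof. Both apply Theorem~\ref{jensen2} to $f-a$, split with $\log x=\log^+x-\log^+(1/x)$, compare $(f-a)^s$ with $f^s$ by the triangle inequality, and use Corollary~\ref{sphericalConj} for \eqref{eq:FMT2}. Your Step~1 is never used, because $T(f,r)$ is \emph{defined} as $N(f,\infty,r)+\frac12 m(f^s,\infty,r)$. The genuine gap is \eqref{eq:FMT3}. You grant, rather than prove, that the $\log$-level hypothesis of Definition~\ref{def:MPB(Omega)} upgrades to $m(f\circ S_{f-a},b,r)=m(f,b,r)+O(1)$ for $b=a,\infty$. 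An $O(1)$ bound on a mean of $\log|\cdot|$ does not by itself give this, since it does not control positive and negative parts separately. The idea you are missing is the paper's: insert the balance hypothesis into the Jensen identity for $f-a$ \emph{before} any $\log^+$ splitting. The two half-integrals then collapse into the single mean of $\log|f-a|$. Splitting only that one gives $m(f,\infty,r)-m(f,a,r)+O(1)$, hence $N(f,a,r)+m(f,a,r)+H(f,a,r)=N(f,\infty,r)+m(f,\infty,r)+O(1)$. No $\log^+$ form of the hypothesis at $a$ is needed. Strictly, this step requires $f-a$ to be balanced against $(f-a)\circ S_{f-a}=f\circ S_{f-a}-a$, whereas the definition balances $f$ against $f\circ S_{f-a}$; the paper passes over this.

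The comparison you flagged in Step~3 does not go away. Identifying $N(f,\infty,r)+m(f,\infty,r)$ with $T(f,r)$, and producing the correction terms in \eqref{eq:FMT2}, both require $m(f^s,\infty,r)=m(f,\infty,r)+m(f\circ S_f,\infty,r)+O(1)$. The paper's only justification is the sandwich $\log^+|f^s|\le\log^+|f|+\log^+|f\circ S_f|\le\log^+|f^s|+\log 2$, stated in the remark after the theorem and used in the proof of Proposition~\ref{prop:CharacteristicAlgebraMPB}. Its second inequality is false pointwise: with $|f|=M>2$ and $|f\circ S_f|=1/M$ it reads $\log M\le\log 2$. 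The exact defect is the mean of $\min\{\log^+|f|,\log^+(1/|f\circ S_f|)\}+\min\{\log^+(1/|f|),\log^+|f\circ S_f|\}$, which neither your sketch nor the paper controls. So your worry is correct. A sound version must either bound this quantity or state the result with $N(f,\infty,r)+m(f,\infty,r)$ in place of $T(f,r)$.

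The same reversed inequality appears in \eqref{eq:FMT1}, and your sketch inherits it. The expansion $(f-a)^s=f^s-(f*\bar a+a*f^c)+|a|^2$ honestly yields an error of at most $m(f,\infty,r)+m(f^c,\infty,r)+O(1)$. This uses $(a*f^c)(q)=a\,f^c(a^{-1}qa)$ and the fact that $q\mapsto a^{-1}qa$ preserves $\partial\mathbb{B}_r$ and $\sigma$. Rewriting that error as $O(m(ff^c,\infty,r))$ needs $\log^+|x|+\log^+|y|\lesssim\log^+|xy|$.

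Finally, check the sign of $H$. The Jensen identity for $f-a$ has $-\frac{r^2}{16}\Delta_4\log|(f-a)^s|_{q=0}=-H(f,a,r)$ on the right, so rearranging puts $+H(f,a,r)$ on the left, not $-H(f,a,r)$. The paper's own last display, $O(1)=m(f,\infty,r)-m(f,a,r)-N(f,a,r)+N(f,\infty,r)-H(f,a,r)$, rearranges the same way.
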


\begin{remark}\label{remark:FMT1}
    The coefficient on $m(ff^c,\infty,r)$ in Equation \ref{eq:FMT1} is only in the interval $[-1,1]$. We use $O$-notation for convenience in writing.   
\end{remark}

\begin{remark}
    The correction terms on the right-hand side of Equation \ref{eq:FMT2} are a forced normalization. They demonstrate that the obstruction in achieving a full First Main Theorem is precisely comparing the terms $m(f\circ S_f,\infty,r)$ and $m(f\circ S_{f-a},\infty,r)$, the essential problem being that the latter term remains dependent on $a$, despite measuring proximity to $\infty$. We find that the identity \[S_{f-a}=[f'_s(1-f^{-1}a)^{-1}f^{'-1}_s] S_f[f'_s(1-f^{-1}a)f^{'-1}_s]\] provides useful intuition, by demonstrating that $S_{f-a}$ is a conjugation of $S_f$. This does not, however, resolve the issue of dependence on $a$. 
\end{remark}

\begin{remark}
    The left-hand sides of Equations \ref{eq:FMT1} and \ref{eq:FMT2} each motivate two different definitions for the characteristic function $T(f,a,r)$. These definitions are equivalent up to $O(1)$, because \[\log^+|f^s|\leq \log^+|f|+\log^+|f\circ S_f|\leq \log^+|f^s|+\log 2,\] with the corresponding identity on the mean proximity function following similarly. Thus, we do not require a secondary definition. 
\end{remark}

\begin{proof}
    Let $h(q)=f(q)-a$, and apply Theorem \ref{jensen2} to $h$. If $f(0)=a$, then refer to Remark \ref{zeroHandleRemark}. Then, by Definition \ref{def:IntegratedCountingFunction} and Definition \ref{def:HarmonicRemainderFunction}, we have \begin{align}\label{eq:FMT4}
        \log|f(0)-a|&=\frac{1}{2|\partial\mathbb B_r|}\int_{\partial\mathbb B_r}\log|f(q)-a|\,d\sigma(q) + \frac{1}{2|\partial\mathbb B_r|}\int_{\partial\mathbb B_r}\log|(f\circ S_{f-a})(q)-a|\,d\sigma(q)\\
        &\quad - N(f,a,r)+N(f,\infty,r) -H(f,a,r) \notag,
    \end{align}
    by counting zeroes and poles. 

    Recalling $\log|q|=\log^+|q|-\log^+\left|\frac{1}{q}\right|$, we have by Definition \ref{def:AnalMPF} \begin{align*}
        \log|f(0)-a|&=\frac{1}{2|\partial\mathbb B_r|}\int_{\partial\mathbb B_r}\log^+|f(q)-a|\,d\sigma(q)-\frac{1}{2|\partial\mathbb B_r|}\int_{\partial\mathbb B_r}\log^+\left|\frac{1}{f(q)-a}\right|\,d\sigma(q) \\
        &\quad+\frac{1}{2|\partial\mathbb B_r|}\int_{\partial\mathbb B_r}\log^+|(f\circ S_{f-a})(q)-a|\,d\sigma(q)-\frac{1}{2|\partial\mathbb B_r|}\int_{\partial\mathbb B_r}\log^+\left|\frac{1}{(f\circ S_{f-a})-a}\right|\,d\sigma(q) \\
        &\quad - N(f,a,r)+N(f,\infty,r) -H(f,a,r) \\
        &=\frac{1}{2}m(f-a,\infty,r)-\frac{1}{2}m(f,a,r)+\frac{1}{2}m(f\circ S_{f-a}-a,\infty,r)+\frac{1}{2}m(f\circ S_{f-a},a,r) \\
        &\quad - N(f,a,r)+N(f,\infty,r) -H(f,a,r) .\end{align*}

    By the elementary identity $\log|x\pm y|\leq \log|x|+\log|y|+\log 2$, we have \[m(f-a,\infty,r)=m(f,\infty,r)+O(1), \quad m(f\circ S_{f-a}-a,\infty,r)=m(f\circ S_{f-a},\infty,r)+O(1).\] Hence, \begin{align*}
        O(1)&= \frac{1}{2}m(f,\infty,r)-\frac{1}{2}m(f,a,r)+\frac{1}{2}m(f\circ S_{f-a},\infty,r)+\frac{1}{2}m(f\circ S_{f-a},a,r) \\
        &\quad- N(f,a,r)+N(f,\infty,r) -H(f,a,r), 
    \end{align*}
    and this proves Equation \ref{eq:FMT2}. 

    Now recall \cite[Proposition 8]{perottiJensen2019} once again, so Equation \ref{eq:FMT4} becomes \begin{align*}
         \log|f(0)-a|&=\frac{1}{2|\partial\mathbb B_r|}\int_{\partial\mathbb B_r}\log|(f-a)^s(q)|\,d\sigma(q) \\
        &\quad - N(f,a,r)+N(f,\infty,r) -H(f,a,r) \notag.
    \end{align*} Again, by Definition \ref{def:AnalMPF} and the elementary identity  $\log|q|=\log^+|q|-\log^+\left|\frac{1}{q}\right|$, we have 
    \begin{align*}
        \log|f(0)-a|&=\frac{1}{2|\partial\mathbb B_r|}\int_{\partial\mathbb B_r}\log^+|(f-a)^s(q)|\,d\sigma(q)-\frac{1}{2|\partial\mathbb B_r|}\int_{\partial\mathbb B_r}\log^+\left|\frac{1}{(f-a)^s(q)}\right|\,d\sigma(q) \\
        &\quad - N(f,a,r)+N(f,\infty,r) -H(f,a,r) \\
        &=\frac{1}{2}m((f-a)^s,\infty,r)+\frac{1}{2}m((f-a)^s,0,r) \\
        &\quad - N(f,a,r)+N(f,\infty,r) -H(f,a,r) .\end{align*}
The remaining obstruction is dealing with the term $m((f-a)^s,\infty,r)$. Observe that \begin{align*}
    \log^+|(f-a+a)^s|&\leq \log^+|(f-a)^s|+\log^+|f|+\log^+|f^c|+\log^+|a^4|+\log 4 \\
    \log^+|(f-a)^s|&\leq \log^+|f^s|+\log^+|f|+\log^+|f^c|+\log^+|a^4|+\log 4.
\end{align*}
Hence, 
\begin{align*}
    \log^+|(f-a+a)^s|&\leq \log^+|(f-a)^s|+\log^+|ff^c|+\log^+|a^4|+\log6\\
    \log^+|(f-a)^s|&\leq \log^+|f^s|+\log^+|ff^c|+\log^+|a^4|+\log 6.
\end{align*}
And so, we have $m((f-a)^s,\infty,r)=m(f^s,\infty,r)+O(m(ff^c,\infty,r))+O(1)$, where as noted in Remark \ref{remark:FMT1}, the coefficient absorbed into the $O$-notation is in $[-1,1]$. This proves Equation \ref{eq:FMT1}. 

Finally, note that if $f$ is mean proximity balanced, then Theorem \ref{jensen2} becomes \begin{align*}
    O(1) &= \frac{1}{|\partial \mathbb B_R|}\int_{\partial \mathbb B_R}\log|f(w)|\,d\sigma(w) \\  
   &\quad -N(f,a,r)+N(f,\infty,r)-H(f,a,r), \end{align*} where we have again used Definitions \ref{def:IntegratedCountingFunction} and \ref{def:HarmonicRemainderFunction} to write the correction terms. The proof here thus follows the classical case. Using $\log|q|=\log^+|q|-\log^+\left|\frac{1}{q}\right|$ and $\log|x\pm y|\leq \log|x|+\log|y|+\log 2$ as before, we have \[O(1)=m(f,\infty,r)-m(f,a,r)-N(f,a,r)+N(f,\infty,r)-H(f,a,r),\] and Equation \ref{eq:FMT3} follows.   
\end{proof}

We now turn our attention to the additional algebraic properties of the characteristic function on mean proximity balanced functions, as many of the obstructions observed in Proposition \ref{prop:CharacteristicAlgera} disappear. 

\begin{prop}\label{prop:CharacteristicAlgebraMPB}
     Let $f,g$ be semiregular and mean proximity balanced on $\Omega_D$, and let $a,b\in\hp$. Let $\Phi$ be a fractional linear transform with 
\[
\Phi(q) = (Aq + B)*(Cq + D)^{-*},
\quad
\begin{pmatrix}
A & B\\
C & D
\end{pmatrix}
\in GL(2,\mathbb H).
\]
Then for all $r\in\mathcal R$, 
  \begin{align}
    T(f,a,r)&=T(f,b,r)+O(1)  \label{eq:1CharactersticAlgebraMPB}\\
    T(f^{n*},a,r)&=nT(f,a,r)+O(1) \label{eq:2CharactersticAlgebraMPB}\\
    T(f*g,a,r)&\leq T(f,a,r)+T(g,a,r)+O(1) \label{eq:3CharactersticAlgebraMPB}\\
    T(f+g,a,r)&=T(f,a,r)+T(g,a,r)+O(1) \label{eq:4CharactersticAlgebraMPB}\\
    T(f^{-*},a,r)&=T(f,a,r)+O(1)  \label{eq:5CharactersticAlgebraMPB}\\
    T(\Phi(f),a,r) &= T(f,a,r)+O(1)  \label{eq:6CharactersticAlgebraMPB}
\end{align}
\end{prop}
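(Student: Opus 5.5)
The plan is to reduce every identity to the case $a=\infty$ and then transport it using Equation \ref{eq:FMT3} of Theorem \ref{teo:FMT}. The transport is what the mean proximity balanced hypothesis buys us: for $f$ mean proximity balanced, the left-hand side of \ref{eq:FMT3} agrees with $T(f,a,r)$ up to $O(1)$. Indeed, $\log^+|(f-a)^s|$ and $\log^+|f-a|+\log^+|(f-a)\circ S_{f-a}|$ differ by a bounded amount, and the balancing condition identifies the two boundary integrals up to $O(1)$. So $T(f,a,r)=T(f,\infty,r)+O(1)$ for every $a\in\hp$, and Equation \ref{eq:1CharactersticAlgebraMPB} follows at once by comparing $a$ and $b$ through $\infty$.

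For Equations \ref{eq:2CharactersticAlgebraMPB} and \ref{eq:3CharactersticAlgebraMPB} I would invoke Equations \ref{eq:1CharacteristicAlgera} and \ref{eq:2CharacteristicAlgera} of Proposition \ref{prop:CharacteristicAlgera} at $a=\infty$, then transport back to $a$ using \ref{eq:1CharactersticAlgebraMPB}. This transport needs $f^{n*}$ and $f*g$ to be mean proximity balanced themselves. I would check this directly from the identity $(f*g)^s=f^sg^s$ and the relation $|(f*g)(q)|=|f(q)|\,|g(f(q)^{-1}qf(q))|$. If that closure argument is too delicate, I would fall back on Equation \ref{eq:FMT2}, whose correction terms are expressed through $f$ and $g$ individually.

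For Equation \ref{eq:5CharactersticAlgebraMPB} I would use $(f^{-*})^s=(f^s)^{-1}$. This gives $\ordt_{\s_\zeta}(f^{-*},\infty)=\ordt_{\s_\zeta}(f,0)$, so $N(f^{-*},\infty,r)=N(f,0,r)$ and $\tfrac12 m(f^{-*\,s},\infty,r)=\tfrac12 m(f^s,0,r)$. Hence $T(f^{-*},\infty,r)$ equals $T(f,0,r)+H(f,0,r)$. The $H$ term has to be absorbed via the Jensen formula itself: Theorem \ref{jensen2} applied to $f$ gives exactly $N(f,0,r)-N(f,\infty,r)+\tfrac12 m(f^s,0,r)-\tfrac12 m(f^s,\infty,r)-H(f,0,r)=O(1)$, which is the First Main Theorem at $a=0$. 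Combining these with \ref{eq:1CharactersticAlgebraMPB} yields \ref{eq:5CharactersticAlgebraMPB}. Equation \ref{eq:6CharactersticAlgebraMPB} would then follow by decomposing $\Phi$ into translations $q\mapsto q+B$, left and right multiplications by constants, and $*$-inversion. Each piece changes $T$ by $O(1)$: translations by \ref{eq:1CharactersticAlgebraMPB}, constant multiples because $|(Af)^s|=|A|^2|f^s|$, and inversion by \ref{eq:5CharactersticAlgebraMPB}. The usual $GL(2)$ Bruhat-type factorization over $\hh$ supplies the decomposition, treating the case $C=0$ separately.

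The main obstacle is Equation \ref{eq:4CharactersticAlgebraMPB}. As an equality it is false already classically (take $g=-f$), so I expect only the upper bound $T(f+g,a,r)\le T(f,a,r)+T(g,a,r)+O(1)$ to be provable. My approach would start from Equation \ref{eq:3CharacteristicAlgera}, which carries the mixed term $\tfrac12 m(f*g^c+g*f^c,\infty,r)$. For balanced functions I would avoid $(f+g)^s$ altogether by using the \ref{eq:FMT3} form of the characteristic, $N+m$ at $\infty$ with $m(f+g,\infty,r)$ computed pointwise. The pointwise bound $\log^+|f+g|\le\log^+|f|+\log^+|g|+\log2$ then removes the mixed term entirely. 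The delicate point is again that $f+g$ must be mean proximity balanced for this form to agree with $T(f+g,\infty,r)$ up to $O(1)$. I would attempt to verify that closure property from the definition, and I flag it as the genuine gap.
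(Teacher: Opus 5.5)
Your argument for (\ref{eq:5CharactersticAlgebraMPB}) breaks at the step where the $H$ term is ``absorbed''. You correctly find $T(f^{-*},\infty,r)=T(f,0,r)+H(f,0,r)$. The First Main Theorem at $a=0$, however, is a statement about $T(f,0,r)$, which already contains its own $-H(f,0,r)$, so it cannot cancel the extra copy. Your two facts therefore combine to $T(f^{-*},\infty,r)=T(f,\infty,r)+H(f,0,r)+O(1)$, not to (\ref{eq:5CharactersticAlgebraMPB}).

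The identity you quote also has the sign of $H$ reversed. Theorem \ref{jensen2} applied to $f$ gives $N(f,0,r)-N(f,\infty,r)+\frac12 m(f^s,0,r)-\frac12 m(f^s,\infty,r)+H(f,0,r)=-\log|f(0)|$, exactly as in (\ref{eq:FMT4}), and with that sign the leftover is $-H(f,0,r)$. Either way the leftover is $\pm\frac{r^2}{16}\Delta_4\log|f^s|_{\mid q=0}$, a fixed multiple of $r^2$ that is nonzero in general. The reason is that $*$-inversion swaps the roles of $0$ and $\infty$ in $N$ and $m$, but not in $H$, since $H(\cdot,\infty,r)\equiv 0$.

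No repair is possible, because the identity is false. Take $f(q)=q-1$, which is slice-preserving and hence mean proximity balanced, with $f^{-*}=(q-1)^{-1}$. For $r>2$ we have $|q-1|>1$ on $\partial\mathbb B_r$, so
\[
T(f,\infty,r)=\frac{1}{|\partial\mathbb B_r|}\int_{\partial\mathbb B_r}\log|q-1|\,d\sigma(q)=\log r+\frac{1}{4r^2},\qquad T(f^{-*},\infty,r)=N(f,0,r)=J(1,r)=\log r+\frac{1}{4r^2}-\frac{r^2}{4}.
\]
So (\ref{eq:5CharactersticAlgebraMPB}) fails at $a=\infty$, and (\ref{eq:6CharactersticAlgebraMPB}) fails for $\Phi(q)=q^{-1}$. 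Your reduction of (\ref{eq:6CharactersticAlgebraMPB}) to translations, constant multiplications and one inversion is cleaner than the paper's ``apply (\ref{eq:3CharactersticAlgebraMPB}), (\ref{eq:4CharactersticAlgebraMPB}), (\ref{eq:5CharactersticAlgebraMPB}) in succession'', which could only ever produce inequalities. It still inherits this failure. The paper's own proof of (\ref{eq:5CharactersticAlgebraMPB}) has the same hole: it asserts $T(f^{-*},0,r)=T(f,r)$, silently dropping $-H(f^{-*},0,r)=H(f,0,r)$.

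The rest of your plan tracks the paper:
\begin{itemize}
\item (\ref{eq:1CharactersticAlgebraMPB}) from (\ref{eq:FMT3}) at $a$ and $b$;
\item (\ref{eq:2CharactersticAlgebraMPB}) and (\ref{eq:3CharactersticAlgebraMPB}) from Proposition \ref{prop:CharacteristicAlgera} at $\infty$, followed by transport;
\item (\ref{eq:4CharactersticAlgebraMPB}) from $\log^+|x+y|\le\log^+|x|+\log^+|y|+\log 2$.
\end{itemize}
You are right that (\ref{eq:4CharactersticAlgebraMPB}) can only hold as an upper bound, and the paper's argument indeed proves only that. A clean counterexample is $f=q^2$, $g=q-q^2$, which also avoids the degenerate constant sum $f+g\equiv 0$ produced by $g=-f$. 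You are also right that each transport needs $f^{n*}$, $f*g$, $f+g$ and $f^{-*}$ to be mean proximity balanced, which neither you nor the paper establishes. Falling back on (\ref{eq:FMT2}) does not help, since its correction $\frac12 m(f\circ S_{f-a},\infty,r)-\frac12 m(f\circ S_f,\infty,r)$ is exactly the quantity not known to be $O(1)$.

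Two cautions apply to (\ref{eq:1CharactersticAlgebraMPB}) itself.
\begin{itemize}
\item You claim that $\log^+$ of $|(f-a)^s|$ (or of its reciprocal) equals, up to a bounded error, the sum of the corresponding $\log^+$ terms for $f-a$ and $(f-a)\circ S_{f-a}$. This is unjustified: only $\log^+|xy|\le\log^+|x|+\log^+|y|$ holds pointwise, and the balancing hypothesis controls integrals of $\log$, not of $\log^+$.
\item Because of the sign slip noted above, with $T(f,a,r)$ defined using $-H(f,a,r)$, even (\ref{eq:1CharactersticAlgebraMPB}) fails for $f(q)=q-1$. There $H(f,0,r)=\frac{r^2}{4}$ and $T(f,0,r)-T(f,\infty,r)=-\frac{r^2}{2}$ for $r>2$. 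So Definition \ref{def:characteristic} and (\ref{eq:FMT3}) need $+H(f,a,r)$.
\end{itemize}
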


\begin{proof}
Equation \ref{eq:1CharactersticAlgebraMPB} follows by applying Theorem \ref{teo:FMT} to $T(f,a,r)$ and $T(f,b,r)$ separately. Equations \ref{eq:2CharactersticAlgebraMPB} and \ref{eq:3CharactersticAlgebraMPB} follow by applying Equation \ref{eq:1CharactersticAlgebraMPB} to Equations \ref{eq:1CharacteristicAlgera} and \ref{eq:2CharacteristicAlgera}. 

For Equation \ref{eq:4CharactersticAlgebraMPB}, recall \[\log^+|f^s|\leq \log^+|f|+\log^+|f\circ S_f|\leq \log^+|f^s|+\log 2,\] so $m((f-a)^s,0,r)=2m(f,a,r)+O(1)$. Hence, \[m\bigg(\big((f+g)-a\big)^s,0,r\bigg)=2m(f+g,a,r)+O(1).\] 

Now take $a=\infty$. Then, by the identity $\log^+|x+y|\leq \log^+|x|+\log^+|y|+\log 2$, we have $m(f+g,a,r)\leq m(f,a,r)+m(g,a,r)+\log 2$. As noted in the proof of Proposition \ref{prop:CharacteristicAlgera}, $f+g$ can only have a pole if $f$ or $g$ has a pole. Hence, $N(f+g,\infty,r)\leq N(f,\infty, r)+N(g,\infty,r)$, and we have $T(f+g,\infty,r)\leq T(f,\infty,r)+T(g,\infty,r)+\log 2$. Using Equation \ref{eq:1CharactersticAlgebraMPB} to transport to arbitrary $a$ yields the desired result. 

For Equation \ref{eq:5CharactersticAlgebraMPB}, note that $N(f^{-*},0,r)=N(f,\infty,r)$, and $m(f^{-*},0,r)=m(f,\infty,r)$. Hence, $T(f^{-*},0,r)=T(f,r)$. By Equation \ref{eq:1CharactersticAlgebraMPB}, $T(f^{-*},0,r)=T(f^{-*},a,r)+O(1),$ and recalling Theorem \ref{teo:FMT} to rewrite $T(f,r)$ yields the desired result.

For Equation \ref{eq:6CharactersticAlgebraMPB}, we may simply apply Equations \ref{eq:3CharactersticAlgebraMPB}, \ref{eq:4CharactersticAlgebraMPB}, and \ref{eq:5CharactersticAlgebraMPB} in succession. 
\end{proof}

Finally, we note that for mean proximity balanced functions, the First Main Theorem provides an upper bound on how often $f$ can attain $a$. 

\begin{prop}
    Let $f$ be semiregular and mean proximity balanced on $\Omega_D$. Then, \[N(f,a,r)\leq T(f,r)+H(f,a,r)+O(1).\]
\end{prop}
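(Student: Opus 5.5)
The plan is to read the bound straight off Equation \ref{eq:FMT3} of Theorem \ref{teo:FMT}, using only that the mean proximity function is nonnegative.

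First, since $f$ is mean proximity balanced, Theorem \ref{teo:FMT} gives
\[
N(f,a,r)+m(f,a,r)-H(f,a,r)=T(f,r)+O(1)
\]
for all $r\in\mathcal R$. This holds for $f\not\equiv a,\infty$. If $f\equiv a$ the counting function is not meaningful, so that case is excluded exactly as in the First Main Theorem. If $f(0)=a$, the normalization of Remark \ref{zeroHandleRemark} is already built into $N(f,a,r)$ through the term $n(f,a,0)\log r$, so no separate treatment is needed.

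Second, rearrange to isolate the counting function:
\[
N(f,a,r)=T(f,r)+H(f,a,r)-m(f,a,r)+O(1).
\]

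Third, show $m(f,a,r)\ge 0$. In Definition \ref{def:AnalMPF}, $\lambda_a$ takes values in $\log^+$, or equals $0$ at $\infty$ when $a\neq\infty$, so it is pointwise nonnegative. Its spherical average over $\partial\mathbb B_r$ is therefore nonnegative, and dropping $-m(f,a,r)$ can only increase the right-hand side. This gives
\[
N(f,a,r)\le T(f,r)+H(f,a,r)+O(1).
\]

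If a different Weil function were used, Remark \ref{remark:m(f,lambda_a,r)O(1)} shows the change in $m$ is $O(1)$, and it would be absorbed into the error term.

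The only delicate point is that $H(f,a,r)$ is signed and of size $r^2$, so it cannot be discarded. That is why it stays in the final bound. For $a=\infty$ we have $H\equiv 0$, and the inequality reduces to the classical form.

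Otherwise the argument is immediate, and I expect no real obstacle beyond checking that the $O(1)$ in Equation \ref{eq:FMT3} is uniform in $r\in\mathcal R$. That uniformity is exactly what Definition \ref{def:MPB(Omega)} provides.
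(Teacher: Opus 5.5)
Your proof is correct and matches the paper's argument: rearrange Equation \ref{eq:FMT3} and drop $m(f,a,r)$, which is nonnegative because $\lambda_a\geq 0$ pointwise. One small aside: you call $H(f,a,r)$ signed, whereas the paper remarks that $H(f,a,r)\geq 0$. This does not affect the argument, since $H$ has to stay on the right-hand side in either case.
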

\begin{proof}
    This follows from the fact that the proximity function $m(f,a,r)$ is always nonnegative. 
\end{proof}

\begin{remark}
We note that $H(f,a,r)$ is always nonnegative. Indeed, by Definition~\ref{def:HarmonicRemainderFunction}, $H(f,a,r)$ is defined in terms of the Laplacian of $\log|(f-a)^s|$. \cite[Proposition 8.4]{Bisi2021} proved that $\log|f|^2$ satisfies the mean-value inequality in $\mathbb{R}^4$, and is therefore subharmonic on $\Omega_D \setminus \mathcal{ZP}(f)$. Since $\log|f| = \frac{1}{2}\log|f|^2$, it follows that $\log|f|$ is subharmonic outside the zeroes and poles of $f$. Consequently, for any semiregular function $f$, we have $H(f,a,r) \ge 0$.
\end{remark}

\section{Open Questions}\label{sec:OpenQuestions}

We conclude with several questions suggested by the results of this paper. 

\begin{enumerate}
    \item Is there a precise relationship between the spherical averages of $\log|f|$ and $\log|f\circ S_f|$ for arbitrary semiregular $f$? More generally, given an arbitrary quaternionic function $u(q)$, can the growth of $f(u(q)^{-1}qu(q))$ be controlled solely in terms of the growth of $f(q)$?

    \item Can one formulate an analogue of Jensen's formula, and a consequent notion of value distribution, in which the underlying measure is taken over spherical sets rather than individual quaternionic points? In particular, is it possible to treat each sphere $x+y\s$ as a single atomic element of the measure space?

    \item Is the error term $O(m(ff^c,\infty,r))$ appearing in Theorem~\ref{teo:FMT} best possible for general semiregular $f$, or can it be improved to yield a stronger form of the First Main Theorem in full generality?

    \item Is a Poisson--Jensen formula for mean proximity balanced functions achievable? One possible approach is via Almansi decompositions. A similar approach was utilized by \cite{Perotti2020}. 

    \item Can a Second Main Theorem be established for mean proximity balanced functions?
\end{enumerate}

\bibliographystyle{amsalpha}
\bibliography{bib}
\input{appendix.tex}
\end{document}

%% file: appendix.tex
\appendix
\section{Numerical Verification of the Jensen Formula}

To illustrate the effect of the correction made to Theorem \ref{PJensen} in section \ref{PJSection}, we performed a numerical check for the simple case $f(q)=q-a$ with $a=0.5+0.7i$ and $R=2$. We utilize numerical integration over $\partial \mathbb B_R$. 

\begin{lstlisting}[language=Python, caption={Numerical verification of the Jensen Formula}, label={JensenNumerical}]
import numpy as np

def quat_mul(p, q):
    # Hamilton product
    p0, p1, p2, p3 = p
    q0, q1, q2, q3 = q
    return np.array([
        p0*q0 - p1*q1 - p2*q2 - p3*q3,
        p0*q1 + p1*q0 + p2*q3 - p3*q2,
        p0*q2 - p1*q3 + p2*q0 + p3*q1,
        p0*q3 + p1*q2 - p2*q1 + p3*q0
    ])

def quat_conj(q):
    qc = q.copy()
    qc[1:] *= -1
    return qc

def quat_norm(q):
    return np.linalg.norm(q)

def quat_inv(q):
    n2 = np.dot(q,q)
    return quat_conj(q)/n2

def f(q, a):
    return q - a

def S_f(q, a): #For f(q,a) as defined, the spherical derivative is 1
    fq = f(q, a)
    return quat_mul(quat_mul(quat_inv(fq), quat_conj(q)), fq)

def J_kernel(zeta, R):
    norm_z = quat_norm(zeta)
    re_z = zeta[0]  # real part
    term1 = np.log(R / norm_z)
    term2 = (norm_z**4 - R**4) / (4 * R**2 * norm_z**4) * (2 * (re_z**2) - norm_z**2)
    return term1 + term2

def sample_sphere_3(R, N):
    # sample standard normal and normalize
    x = np.random.normal(size=(N,4))
    norms = np.linalg.norm(x, axis=1)
    x = (R * x.T / norms).T
    return x

# Parameters
R = 2.0
a = np.array([0.5, 0.7, 0.0, 0.0])  # quaternion a = 0.5 + 0.7 i
N = 300000  # samples for Monte Carlo
samples = sample_sphere_3(R, N)

# Compute mean(log|f(w)|) and mean(log|f(S_f(w))|)
logs_f = np.empty(N)
logs_fS = np.empty(N)

for i, w in enumerate(samples):
    fw = f(w, a)
    logs_f[i] = np.log(quat_norm(fw))
    Sf_w = S_f(w, a)
    fS = f(Sf_w, a)
    logs_fS[i] = np.log(quat_norm(fS))

mean_log_f = logs_f.mean()
mean_log_fS = logs_fS.mean()

boundary_term = 0.5 * (mean_log_f + mean_log_fS)

# Derivative terms for f(q)=q-a at q=0
f0 = -a
inv_f0 = quat_inv(f0)

first_derivative = np.array([1.0, 0.0, 0.0, 0.0])
second_derivative = np.array([0.0, 0.0, 0.0, 0.0])

tmp = quat_mul(inv_f0, quat_conj(first_derivative))
tmp_sq = quat_mul(tmp, tmp)
first_term_contrib = - (R**2)/4 * tmp_sq[0]
second_term_contrib = (R**2)/4 * np.real(np.dot(inv_f0, second_derivative))
total_harmonic_contrib = first_term_contrib + second_term_contrib

# Final computation of LHS and RHS
lhs = np.log(quat_norm(a))
rhs_perotti = boundary_term + total_harmonic_contrib - 2 * J_kernel(a, R)
rhs_corrected = boundary_term + total_harmonic_contrib - 1 * J_kernel(a, R)

# Print results
print(f"LHS log|f(0)| = log|a| = {lhs:.12f}")
print(f"Boundary mean log|f| = {mean_log_f:.12f}")
print(f"Boundary mean log|f o S_f| = {mean_log_fS:.12f}")
print(f"Boundary term (average of both /2) = {boundary_term:.12f}")
print(f"Harmonic correction term = {total_harmonic_contrib:.12f}")
print(f"J(a,R) kernel = {J_kernel(a, R):.12f}")
print()
print(f"RHS Perotti (factor 2): {rhs_perotti:.12f}")
print(f"RHS Corrected (factor 1): {rhs_corrected:.12f}")
print()
print("Differences from LHS:")
print(f"Perotti - LHS = {rhs_perotti - lhs:.12e}")
print(f"Corrected - LHS = {rhs_corrected - lhs:.12e}")
\end{lstlisting}

\begin{lstlisting}[caption={Readout of Listing \ref{JensenNumerical}}]
LHS log|f(0)| = log|a| = -0.150552546392
Boundary mean log|f| = 0.739728385939
Boundary mean log|f o S_f| = 0.616708942342
Boundary term (average of both /2) = 0.678218664141
Harmonic correction term = 0.438276113952
J(a,R) kernel = 1.266975840904

RHS Perotti (factor 2): -1.417456903715
RHS Corrected (factor 1): -0.150481062811

Differences from LHS:
Perotti - LHS = -1.266904357323e+00
Corrected - LHS = 7.148358062217e-05
\end{lstlisting}

The above script represents quaternions as 4-vectors and imposes quaternion arithmetic. The integrals are approximate via Monte-Carlo sampling on the 3-sphere with $N=300,000$ samples. In the case where one applies one spherical Blaschke factor per the conventions in Theorem \ref{Jensen}, we attain a difference of $\approx 7.14\times10^{-5}$, which is a small error within numerical precision. On the other hand, in the conventions of Theorem \ref{PJensen}, one attains a difference of $\approx -1.27$, indicating a failure of the identity due to a lack of biharmonicity. 

%% file: bib.bib
@article{Picard1880,
  author  = {Picard, Émile},
  title   = {Mémoire sur les fonctions entières},
  journal = {Annales Scientifiques de l'École Normale Supérieure},
  volume  = {9},
  pages   = {145--166},
  year    = {1880}
}

@article{Nevanlinna1925,
  author  = {Nevanlinna, Rolf},
  title   = {Zur Theorie der meromorphen Funktionen},
  journal = {Acta Mathematica},
  volume  = {46},
  pages   = {1--99},
  year    = {1925}
}

@article{Bernard84,
 ISSN = {00029327, 10806377},
 URL = {http://www.jstor.org/stable/2374283},
 author = {Bernard Shiffman},
 journal = {American Journal of Mathematics},
 number = {3},
 pages = {509--531},
 publisher = {Johns Hopkins University Press},
 title = {A General Second Main Theorem for Meromorphic Functions on Cn},
 urldate = {2025-09-08},
 volume = {106},
 year = {1984}
}

@article{Griffiths1973,
  author  = {Griffiths, Phillip and King, James},
  title   = {Nevanlinna theory and holomorphic mappings between algebraic varieties},
  journal = {Acta Mathematica},
  volume  = {130},
  number  = {1},
  pages   = {145--220},
  year    = {1973},
  doi     = {10.1007/BF02392265},
  url     = {https://doi.org/10.1007/BF02392265},
  issn    = {1871-2509}
}

@book{Noguchi2014,
  author    = {Junjiro Noguchi and Jörg Winkelmann},
  title     = {Nevanlinna Theory in Several Complex Variables and Diophantine Approximation},
  series    = {Grundlehren der mathematischen Wissenschaften},
  edition   = {1},
  publisher = {Springer Tokyo},
  year      = {2014},
  pages     = {XIV, 416},
  isbn      = {978-4-431-54571-2},
  doi       = {10.1007/978-4-431-54571-2},
  url       = {https://doi.org/10.1007/978-4-431-54571-2},
  issn     = {0072-7830},
  eissn    = {2196-9701}
}

@article{Quang2022,
   title={Meromorphic Mappings into Projective Varieties with Arbitrary Families of Moving Hypersurfaces},
   volume={32},
   ISSN={1559-002X},
   url={http://dx.doi.org/10.1007/s12220-021-00765-3},
   DOI={10.1007/s12220-021-00765-3},
   number={2},
   journal={The Journal of Geometric Analysis},
   publisher={Springer Science and Business Media LLC},
   author={Si, Duc Quang},
   year={2022},
   month={jan}}

@article{sudbery1979,
  author    = {Anthony Sudbery},
  title     = {Quaternionic analysis},
  journal   = {Mathematical Proceedings of the Cambridge Philosophical Society},
  volume    = {85},
  number    = {2},
  pages     = {199--224},
  year      = {1979},
  publisher = {Cambridge University Press}
}

@article{GENTILI2007279,
title = {A new theory of regular functions of a quaternionic variable},
journal = {Advances in Mathematics},
volume = {216},
number = {1},
pages = {279-301},
year = {2007},
issn = {0001-8708},
doi = {https://doi.org/10.1016/j.aim.2007.05.010},
url = {https://www.sciencedirect.com/science/article/pii/S0001870807001600},
author = {Graziano Gentili and Daniele C. Struppa},
}

@article{fueter35,
  author  = {Rudolf Fueter},
  title   = {{{\"U}}ber die analytische Darstellung der regul{\"a}ren Funktionen einer Quaternionenvariablen},
  journal = {Commentarii Mathematici Helvetici},
  volume  = {8},
  number  = {1},
  pages   = {371--378},
  year    = {1935},
}

@article{fueter34,
  author  = {Rudolf Fueter},
  title   = {Die Funktionentheorie der Differentialgleichungen $\Delta u = 0$ und $\Delta \Delta u = 0$ mit vier reellen Variablen},
  journal = {Commentarii Mathematici Helvetici},
  volume  = {7},
  number  = {1},
  pages   = {307--330},
  year    = {1934},
}

@book{gurlebeck1990,
  author    = {Klaus G{\"u}rlebeck and Walther Spr{\"o}{\ss}ig},
  title     = {Quaternionic Analysis and Elliptic Boundary Value Problems},
  series    = {International Series of Numerical Mathematics},
  volume    = {89},
  publisher = {Birkh{\"a}user},
  address   = {Basel},
  year      = {1990}
}

@book{kravchenko1996,
  author    = {Vladimir V. Kravchenko and Michael V. Shapiro},
  title     = {Integral Representations for Spatial Models of Mathematical Physics},
  series    = {Pitman Research Notes in Mathematics Series},
  volume    = {351},
  publisher = {Longman},
  address   = {Harlow},
  year      = {1996}
}

@book{gentili2022,
  author    = {Graziano Gentili and Caterina Stoppato and Daniele C. Struppa},
  title     = {Regular Functions of a Quaternionic Variable},
  series    = {Springer Monographs in Mathematics},
  edition   = {2},
  publisher = {Springer Cham},
  year      = {2022},
  month     = {9},
  day       = {25},
  pages     = {XXV, 285},
  isbn      = {978-3-031-07530-8},
  doi       = {10.1007/978-3-031-07531-5},
}

@book{cherry2001nevanlinna,
  author    = {William Cherry and Zhuan Ye},
  title     = {Nevanlinna's Theory of Value Distribution: The Second Main Theorem and its Error Terms},
  series    = {Springer Monographs in Mathematics},
  edition   = {1},
  publisher = {Springer Berlin, Heidelberg},
  year      = {2001},
  isbn      = {978-3-540-66416-1},
  doi       = {10.1007/978-3-662-12590-8},
  pages     = {XII, 203},
}

@article{Ghiloni_2011,
   title={Slice regular functions on real alternative algebras},
   volume={226},
   ISSN={0001-8708},
   url={http://dx.doi.org/10.1016/j.aim.2010.08.015},
   DOI={10.1016/j.aim.2010.08.015},
   number={2},
   journal={Advances in Mathematics},
   publisher={Elsevier BV},
   author={Riccardo Ghiloni and Alessandro Perotti},
   year={2011},
   month=jan, pages={1662–1691} }

@inbook{Perotti_2019,
title={Slice Regularity and Harmonicity on Clifford Algebras},
ISBN={9783030238544},
ISSN={2297-024X},
url={http://dx.doi.org/10.1007/978-3-030-23854-4_3},
DOI={10.1007/978-3-030-23854-4_3},
booktitle={Topics in Clifford Analysis},
publisher={Springer International Publishing},
author={Alessandro Perotti},
year={2019},
pages={53–73} }

@misc{perottiJensen2019,
      title={A four dimensional Jensen formula}, 
      author={Alessandro Perotti},
      year={2019},
      eprint={1902.06485},
      archivePrefix={arXiv},
      primaryClass={math.CV},
      url={https://arxiv.org/abs/1902.06485}, 
}

@article{Altavilla_2019,
title={Log-biharmonicity and a Jensen formula in the space of quaternions},
volume={44},
ISSN={1798-2383},
url={http://dx.doi.org/10.5186/aasfm.2019.4447},
DOI={10.5186/aasfm.2019.4447},
number={2},
journal={Annales Academiae Scientiarum Fennicae Mathematica},
publisher={Finnish Mathematical Society},
author={Altavilla, Amedeo and Bisi, Cinzia},
year={2019},
month=jun, pages={805–839} }

@article{Stoppato_2012,
   title={Singularities of slice regular functions},
   volume={285},
   ISSN={1522-2616},
   url={http://dx.doi.org/10.1002/mana.201100082},
   DOI={10.1002/mana.201100082},
   number={10},
   journal={Mathematische Nachrichten},
   publisher={Wiley},
   author={Stoppato, Caterina},
   year={2012},
   month=feb, pages={1274–1293} }

@book{mitrea2013,
  title={Distributions, Partial Differential Equations, and Harmonic Analysis},
  author={Dorina Mitrea},
  isbn={9781461482086},
  lccn={2013944653},
  series={Universitext},
  url={https://books.google.com/books?id=2ZC4BAAAQBAJ},
  year={2013},
  publisher={Springer New York}
}

@article{Ovall2016,
 ISSN = {00029890, 19300972},
 URL = {https://www.jstor.org/stable/10.4169/amer.math.monthly.123.3.287},
 author = {Jeffrey S. Ovall},
 journal = {The American Mathematical Monthly},
 number = {3},
 pages = {pp. 287--291},
 publisher = {[Taylor & Francis, Ltd., Mathematical Association of America]},
 title = {The Laplacian and Mean and Extreme Values},
 volume = {123},
 year = {2016}
}

@article{Bisi2021,
  author    = {Cinzia Bisi and J{\"o}rg Winkelmann},
  title     = {The Harmonicity of Slice Regular Functions},
  journal   = {The Journal of Geometric Analysis},
  year      = {2021},
  volume    = {31},
  number    = {8},
  pages     = {7773--7811},
  doi       = {10.1007/s12220-020-00551-7},
  url       = {https://doi.org/10.1007/s12220-020-00551-7},
  issn      = {1559-002X}
}

@article{Perotti2020,
  author  = {Alessandro Perotti},
  title   = {Almansi Theorem and Mean Value Formula for Quaternionic Slice-regular Functions},
  journal = {Advances in Applied Clifford Algebras},
  year    = {2020},
  volume  = {30},
  number  = {4},
  pages   = {61},
  issn    = {1661-4909},
  doi     = {10.1007/s00006-020-01078-4}
}
